\documentclass[11pt, a4paper]{amsart}
\usepackage[T1]{fontenc}
\usepackage[utf8]{inputenc}
\usepackage{geometry}                
\geometry{letterpaper}
\usepackage{graphicx}
\usepackage{amsmath, amsthm, amssymb, amscd}
\usepackage{mathrsfs}
\usepackage{hyperref}
\usepackage[all]{xy}
\usepackage{tikz}
\usepackage{stmaryrd}

\usepackage{setspace}

\theoremstyle{plain}
\newtheorem{thm}{Theorem}[section]
\newtheorem{lem}[thm]{Lemma}
\newtheorem{prop}[thm]{Proposition}
\newtheorem{cor}[thm]{Corollary}

\newtheorem{conj}[thm]{Conjecture}

\theoremstyle{definition}
\newtheorem{defn}{Definition}[section]
\newtheorem{example}{Example}[section]

\theoremstyle{remark}
\newtheorem{rem}{Remark}[section]

\newcommand{\gl}{\mathrm{GL}}
\newcommand{\SL}{\mathrm{SL}}

\newcommand{\ks}{\mathfrak{S}}
\newcommand{\pgl}{\mathrm{PGL}}
\newcommand{\bbg}{\mathbb{G}}

\newcommand{\ggl}{\mathfrak{gl}}

\newcommand{\cf}{\mathcal{F}}
\newcommand{\co}{\mathcal{O}}
\newcommand{\ck}{\mathcal{K}}
\newcommand{\cl}{\mathcal{L}}

\newcommand{\cp}{\mathcal{P}}

\newcommand{\dd}{\mathscr{D}}
\newcommand{\xx}{\mathscr{X}}

\newcommand{\ka}{\mathfrak{a}}
\newcommand{\kg}{\mathfrak{g}}
\newcommand{\kn}{\mathfrak{n}}
\newcommand{\kp}{\mathfrak{p}}
\newcommand{\kt}{\mathfrak{t}}
\newcommand{\kb}{\mathfrak{b}}

\newcommand{\km}{\mathfrak{m}}

\newcommand{\kq}{\mathfrak{q}}

\newcommand{\kkk}{\mathfrak{K}}

\newcommand{\bn}{\mathbf{N}}
\newcommand{\bz}{\mathbf{Z}}
\newcommand{\br}{\mathbf{R}}
\newcommand{\bp}{\mathbf{P}}
\newcommand{\bq}{\mathbf{Q}}

\newcommand{\bc}{\mathbf{C}}

\newcommand{\bs}{\mathbf{S}}

\newcommand{\bnn}{\mathbf{n}}

\newcommand{\spec}{\mathrm{Spec}}

\newcommand{\val}{\mathrm{val}}
\newcommand{\lie}{\mathrm{Lie}}
\newcommand{\Ad}{\mathrm{Ad}}
\newcommand{\ad}{\mathrm{ad}}

\newcommand{\diag}{\mathrm{diag}}
\newcommand{\gal}{\mathrm{Gal}}

\newcommand{\sch}{\mathrm{Sch}}
\newcommand{\Hom}{\mathrm{Hom}}
\newcommand{\ec}{\mathrm{Ec}}

\newcommand{\rk}{\mathrm{rk}}
\newcommand{\reg}{\mathrm{reg}}
\newcommand{\sym}{\mathrm{Sym}}
\newcommand{\qlbar}{\overline{\mathbf{Q}}_{l}}
\newcommand{\Div}{\mathrm{Div}}
\newcommand{\Cl}{\mathrm{Cl}}
\newcommand{\ch}{\mathrm{ch}}
\newcommand{\Td}{\mathrm{Td}}

\newcommand{\ep}{\epsilon}

\newcommand{\wnn}{\widetilde{N}}
\newcommand{\wht}{\widehat{T}}
\newcommand{\wn}{\tilde{\mathfrak{n}}}

\newcommand{\tp}{\widetilde{\mathbf{P}}}

\author{Zongbin \textsc{Chen}}

\address{EPFL SB Mathgeom/Geom, MA B1 447, Station 8, CH-1015, Lausanne, Switzerland} 
\email{zongbin.chen@gmail.com} 
\title{On the fundamental domain of affine Springer fibers}


\begin{document}

\maketitle



\begin{abstract}
Let $G$ be a connected reductive algebraic group over an algebraically closed field $k$, $\gamma\in \kg(k(\!(\ep)\!))$ a semisimple regular element, we introduce a fundamental domain $F_{\gamma}$ for the affine Springer fibers $\xx_{\gamma}$. 
We show that the purity conjecture of $\xx_{\gamma}$ is equivalent to that of $F_{\gamma}$ via the Arthur-Kottwitz reduction. 
We then concentrate on the unramified affine Springer fibers for the group $\gl_{d}$. It turns out that their fundamental domains behave nicely with respect to the root valuation of $\gamma$. We formulate a rationality conjecture about a generating series of their Poincaré polynomials, and study them in detail for the group $\gl_{3}$. In particular, we pave them in affine spaces and we prove the rationality conjecture.

\end{abstract}

\section{Introduction}

Let $k$ be an algebraically closed field. Let $F=k(\!(\ep)\!)$ be the field of Laurent series with coefficients in $k$, $\co=k[\![\ep]\!]$ the ring of integers of $F$, $\kp=\ep k[\![\ep]\!]$ the maximal ideal of $\co$. We fix a separable algebraic closure $\overline{F}$ of $F$, let $\val:\overline{F}^{\times}\to \bq$ be the discrete valuation normalised by $\val(\ep)=1$. 

Let $G$ be a connected reductive algebraic group over $k$, we make the assumption that $\mathrm{char}(k)>\rk(G)$, where $\rk(G)$ is the semisimple rank of $G$. Let $G_{F}$ be the base change of $G$ from $k$ to $F$. Let $T$ be a maximal torus of $G_{F}$ over $F$. Their Lie algebras will be denoted by the corresponding Gothic letters. Let $K=G(\co)$ be the standard maximal compact subgroup of $G(F)$. We have the affine grassmannian $\xx=G(F)/K$, which is an ind-$k$-scheme. For a regular element $\gamma\in \kt(\co)$, the affine Springer fiber $\xx_{\gamma}$ at $\gamma$
$$
\xx_{\gamma}=\{g\in G(F)/K\,\mid\,\Ad(g^{-1})\gamma \in \kg(\co)\}
$$ 
was introduced by Kazhdan and Lusztig \cite{kl}. The most striking property that these affine Springer fibers are conjectured to have is the following:
 
\begin{conj}[Goresky-Kottwitz-MacPherson]\label{gkmconj1}
The cohomology of $\xx_{\gamma}$ is pure in the sense of Deligne.\end{conj}

Assuming this conjecture, Goresky, Kottwitz and MacPherson \cite{gkm1} have proved the fundamental lemma of Langlands-Shelstad in the unramified case, i.e. when the torus $T$ splits under an unramified extension of $F$. Following the same strategy, but assuming a truncated variant of the purity conjecture, Chaudouard and Laumon \cite{cl} also prove Arthur's weighted fundamental lemma in the unramified case. As it will turn out, their variant of the purity conjecture is equivalent to the conjecture \ref{gkmconj1}, see remark \ref{clpurity}.  

Although the fundamental lemma has been proven by Ng\^o \cite{ngo} and the weighted fundamental lemma by Chaudouard and Laumon \cite{cl2}, \cite{cl3}, the purity conjecture remains open except in several particular cases. Goresky, Kottwitz and MacPherson \cite{gkm2} have proved it when $\gamma$ is \emph{equivalued}, i.e. the elements $\alpha(\gamma)\in \overline{F}$ have the same valuation for all the roots $\alpha$ of $G_{\overline{F}}$ with respect to $T_{\overline{F}}$. Lucarelli \cite{l} constructed an affine paving of $\xx_{\gamma}$ for the unramified elements $\gamma \in \ggl_{3}(F)$, without the equivalued condition. Generalising his method in a more conceptual way, we \cite{chen} construct affine pavings of $\xx_{\gamma}$ for the unramified elements $\gamma \in \ggl_{4}(F)$. As a side result, we also complete the case of $\gl_{3}$.

In general, it is expected that $\xx_{\gamma}$ admits a \emph{Hessenberg paving}. By the word ``Hessenberg paving'' of an ind-$k$-scheme $X$, we mean an exhaustive increasing filtration
$\emptyset\subsetneq X_{1}\subsetneq X_{2}\subsetneq \cdots $ of $X$ by closed complete subschemes $X_{i}$ of finite type over $k$
such that each successive difference $X_{i+1}\backslash X_{i}$ is a disjoint union of iterated affine space bundles over Hessenberg varieties. (For the definition of Hessenberg variety, we refer the reader to \cite{gkm2}, \S 2.) 
When $\gamma$ is unramified, we believe that $\xx_{\gamma}$ even admits an affine paving. In the special case when $G$ is of type $A$, it seems to us that $\xx_{\gamma}$ always admits affine pavings.

One of the difficulties to construct affine pavings is due to the fact that the affine Springer fibers are generally not of finite type. But their structure is not completely arbitrary either. In fact, they have a large symmetry group. The group $T(F)$ acts on $\xx_{\gamma}$ with one of its orbits being dense open in $\xx_{\gamma}$. So the free abelian discrete group $\Lambda=\pi_{0}(T(F))$ acts simply and transitively on the irreducible components of $\xx_{\gamma}$. It is desirable to use this symmetry to reduce the study of $\xx_{\gamma}$ to that of its irreducible components. But the condition of irreducibility is difficult to explore. Instead, we construct a fundamental domain $F_{\gamma}$ of $\xx_{\gamma}$ with respect to the action of $\Lambda$, which should be exactly one of the irreducible components of $\xx_{\gamma}$. 

When $T$ is split over $F$, the construction of $F_{\gamma}$ runs roughly as follows: Since $T$ splits, we have $T=T_{0, F}$ for some maximal torus $T_{0}$ of $G$ over $k$. Let $\cp(T_{0})$ be the set of Borel subgroups of $G$ containing $T_{0}$. For $x\in \xx,\, B\in \cp(T_{0})$, let $f_{B}(x)\in X_{*}(T_{0})$ be the unique co-character $\nu$ such that $x\in U_{B}(F)\ep^{\nu}K/K$, where $U_{B}$ is the unipotent radical of $B$. We denote by $\ec(x)$ the convex hull of $(f_{B}(x))_{B\in \cp(T_{0})}$ in $X_{*}(T_{0})\otimes \br$. Take a point $x_{0}$ in general position on $\xx_{\gamma}$, let  
$$
F_{\gamma}=\{x\in \xx_{\gamma}\mid \ec(x)\subset \ec(x_{0}),\,\nu_{G}(x)=\nu_{G}(x_{0})\},
$$
where the second condition in the bracket means that $x$ and $x_{0}$ lie on the same connected component of $\xx$.

Our first main result is the following:

\begin{thm}\label{main1}

For any $\gamma\in \kt(\co)$, suppose that $F_{\gamma}^{M}$ is cohomologically pure for any proper Levi subgroup $M$ of $G$ containing $T$. Then $\xx_{\gamma}$ is cohomologically pure if and only if $F_{\gamma}$ is. 
\end{thm}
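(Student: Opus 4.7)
The plan is to deduce the theorem from an explicit stratification of $\xx_\gamma$ — the Arthur-Kottwitz reduction advertised in the abstract — whose strata are locally trivial affine-space bundles over $\Lambda$-translates of the fundamental domains $F_\gamma^M$ attached to the various Levis $M\supseteq T$. One stratum, attached to $M=G$, is essentially $\bigsqcup_{\lambda\in\Lambda}\lambda\cdot F_\gamma$ itself; the remaining strata are built from $F_\gamma^M$ for proper Levis. Under the stated purity hypothesis the proper-Levi strata are automatically pure, so the purity of $\xx_\gamma$ gets matched to that of the one remaining stratum, i.e.\ to the purity of $F_\gamma$.

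First I would construct the stratification. For every semi-standard parabolic $P=MN$ containing $T$ one has a natural retraction $r_P:\xx_\gamma\to \ka$ to the apartment, obtained from the Iwasawa decomposition along $P$. Sorting points by the open facet of $\ka$ into which they retract produces locally closed subsets $\xx_\gamma^{P,\nu}$ indexed by $\nu$ in an appropriate lattice parameter space, and one has to check that the $\xx_\gamma^{P,\nu}$ form a locally finite, $\Lambda$-stable stratification of $\xx_\gamma$ by locally closed subschemes, with the stratum for $P=G$ consisting of $\Lambda$-translates of $F_\gamma$.

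Next I would identify each stratum geometrically: the projection $P\to M$ should realize $\xx_\gamma^{P,\nu}$ as a locally trivial affine-space bundle over $\Lambda\cdot F_\gamma^M$ with fiber of explicit dimension determined by $\nu$ and the roots appearing in $N$. Because affine-space bundles preserve cohomological purity, purity of $F_\gamma^M$ propagates to purity of $\xx_\gamma^{P,\nu}$; the hypothesis handles all proper $M$, while for $M=G$ the stratum is pure if and only if $F_\gamma$ is. A weight-spectral-sequence argument applied to the stratification — done $\Lambda$-equivariantly, or locally on $\xx_\gamma$, to accommodate the infinitely many translates indexed by $\Lambda$ — then gives both directions of the equivalence: purity of $\xx_\gamma$ is equivalent to purity of every stratum, which under the hypothesis is equivalent to purity of $F_\gamma$ alone.

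The principal obstacle will be establishing the Arthur-Kottwitz reduction itself: rigorously defining $r_P$ on the ind-scheme $\xx_\gamma$, proving that the $\xx_\gamma^{P,\nu}$ are honest locally closed subschemes carrying the claimed iterated affine-bundle structure over $F_\gamma^M$, and verifying that the closure relations among the strata are such that the weight spectral sequence degenerates and converges despite the infinite $\Lambda$-indexing. Once this geometric input is in place, the purity equivalence follows essentially formally from standard properties of weights under affine-bundle pullback and under stratified assembly.
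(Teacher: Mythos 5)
Your overall approach is the same as the paper's: introduce the Arthur--Kottwitz stratification of $\xx_\gamma$ indexed by parabolics $P\supset T$, show each stratum retracts via an iterated affine-space fibration onto a truncated affine Springer fiber of the Levi $M$, and then run a weight argument across the stratification. That part is right. But there is a genuine gap in how you identify the strata, and it is exactly the point the paper has to work hardest on.

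You assert that the $P=G$ stratum is ``essentially $\bigsqcup_{\lambda\in\Lambda}\lambda\cdot F_\gamma$'' and, implicitly, that the proper-parabolic strata fiber over translates of $F_\gamma^M$. Neither is quite true. In the paper the $G$-stratum is the full truncated affine Springer fiber $\xx_\gamma(D_0)$, and the paper explicitly warns that $F_\gamma$ is only \emph{one} connected component of it, while ``the other connected components of $S_G$ have slight difference from $F_\gamma$'': the truncation polytope $D_0$ is a single polytope in $\ka_T^G$, but the connected components of $\xx_\gamma$ are indexed by $\Lambda_G$, and translation by a $\lambda$ with nontrivial image in $\Lambda_G$ moves the polytope as well, so the other components of $\xx_\gamma(D_0)$ are \emph{not} $\Lambda$-translates of $F_\gamma$. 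The same mismatch occurs one level down: the proper strata $S_P^\nu$ fiber over $\xx_\gamma^{M,\nu}(D_0^P)$, which again are truncated affine Springer fibers for $M$, not literally copies of $F_\gamma^M$. Your purity hypothesis speaks about $F_\gamma^M$ and $F_\gamma$, so you need an extra argument to pass from purity of the fundamental domains to purity of these nearby truncated fibers. This is precisely the content of the paper's Lemma~\ref{fundamental variant}, whose proof is itself nontrivial: after translating so that $F_\gamma=\xx_\gamma^\nu(D_0+\varpi^\vee)$ for a minuscule $\varpi^\vee$, one reapplies the Arthur--Kottwitz stratification to $F_\gamma\setminus\xx_\gamma^\nu(D_0)$, uses Proposition~\ref{gkmreg} to see the strata lie entirely inside $F_\gamma$, and then runs the long exact sequence with the Weil~II weight bound. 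Without something like this lemma, your inductive step from ``$F_\gamma^M$ pure'' to ``proper strata pure'' does not close.

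A secondary, more minor issue: your ``weight spectral sequence done $\Lambda$-equivariantly or locally'' to accommodate the infinitude of strata is left vague. The paper sidesteps this entirely by proving the stronger statement that $F_\gamma$ is pure if and only if $\xx_\gamma(D)$ is pure for any $(G,T)$-orthogonal family $D$ regular with respect to $D_0$; since these are finite-type projective varieties, the open/closed long exact sequence plus the weight-$\leq i-1$ bound from \cite{weil2} does all the work, and the passage to $\xx_\gamma$ itself is then the standard limit over large truncations. I would recommend adopting that strategy rather than trying to make an infinite-indexed spectral sequence precise.
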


For the group $G=\gl_{1}$, we have $F_{\gamma}=\mathrm{pt}$ et $\xx_{\gamma}=\bz \times \mathrm{pt}$, which are obviously cohomologically pure. Using theorem \ref{main1} inductively, we see that the conjecture of Goresky, Kottwitz and MacPherson is equivalent to

\begin{conj}\label{gkmconj2}
The cohomology of $F_{\gamma}$ is pure in the sense of Deligne.
\end{conj}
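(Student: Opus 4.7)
The plan is to deduce the purity of $F_\gamma$ from the construction of an \emph{affine paving}, that is, a finite filtration
$$
\emptyset=F_0\subset F_1\subset\cdots\subset F_N=F_\gamma
$$
by closed subvarieties with each $F_i\setminus F_{i-1}$ a disjoint union of affine spaces. Such a paving forces $H^*(F_\gamma,\overline{\bq}_l)$ to be concentrated in even degrees with all Frobenius eigenvalues on $H^{2i}$ of absolute value $q^i$, hence pure in Deligne's sense. Together with analogous pavings for the Levi fundamental domains $F_\gamma^M$, Theorem~\ref{main1} would then recover the Goresky--Kottwitz--Macpherson purity for the whole $\xx_\gamma$.

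The natural source of such a paving is a torus action. The split maximal torus $T$ acts on $\xx_\gamma$ by left translation (since $\gamma\in\kt$ is centralized by $T$), and this action commutes with the $\Lambda$-translation action, so it preserves $F_\gamma$. Picking a sufficiently regular cocharacter $\lambda:\bbg_m\to T$, the Bialynicki--Birula decomposition writes
$$
F_\gamma=\bigsqcup_{x\in F_\gamma^T}F_\gamma^+(x),\qquad F_\gamma^+(x)=\bigl\{y\in F_\gamma\,:\,\lim_{t\to 0}\lambda(t)\cdot y=x\bigr\}.
$$
The fixed points $F_\gamma^T$ form a finite subset of the lattice $X_*(T)$ --- conjecturally the integral points of an Arthur--Kottwitz-type polytope --- and one must verify that each attractor cell $F_\gamma^+(x)$ is an affine space. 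This should reduce to a weight analysis of $\lambda$ on the Zariski tangent space to $F_\gamma$ at $x$.

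The principal difficulty is the interaction between the Bialynicki--Birula flow on $\xx$ and the $\Ad$-integrality condition cutting out $\xx_\gamma$: the attractor in $\xx$ of a point of $F_\gamma$ need not remain inside $F_\gamma$, and even within $F_\gamma$ the resulting stratum may fail to be smooth. I would attack this with two inputs. First, exploit the Arthur--Kottwitz reduction alluded to in the abstract to replace the cut-out "in $F_\gamma$" by an explicit truncation in Harder--Narasimhan-type invariants that is manifestly $\lambda$-stable. Second, induct on the semisimple rank of $G$, using that the boundary strata of $F_\gamma$ come from lower-rank Levi analogues $F_\gamma^M$ and are amenable to the same analysis. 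For $\gl_n$ the lattice model makes each attractor cell an explicit affine scheme, which is how the author handles $\gl_3$; for general reductive $G$ one needs a uniform root-theoretic description of the weights of $\lambda$ on the deformation complex at each fixed point, and the hardest test case will be non-equivalued $\gamma$, where the equivariant fixed-point techniques of Goresky--Kottwitz--Macpherson and Lusztig do not apply directly.
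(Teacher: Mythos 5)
This statement is a conjecture in the paper, not a theorem: the author proves it only for $\gl_3$ (Theorem~\ref{main2}), and even there not by the method you propose. You should therefore not expect a complete argument, and your text rightly ends by flagging the hard cases; but the specific strategy you sketch has a gap that the paper's $\gl_3$ computation actually illustrates rather than supports.

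The gap is in the central step: a Bia\l{}ynicki--Birula decomposition of $\xx$ for a single generic cocharacter $\lambda$ is nothing other than the decomposition of $\xx$ into orbits of a single Iwahori subgroup $I_\lambda$, and intersecting those orbits with $\xx_\gamma$ does \emph{not} in general produce affine cells. This is precisely the obstruction the paper confronts in \S4.1: paving $F_\gamma$ for $\gl_3$ forces a ``nonstandard'' scheme in which the Iwahori subgroup is allowed to vary from region to region of $(F_\gamma)^T$ (the author uses $I'$ on $R_1,R'_1$ but switches to $I'_l=\Ad(\diag(1,\ep^l,\ep^l))I'$ on the slices $R_{3,l}$, and symmetrically on $R_2$), and indeed the whole difficulty is that no single choice works globally. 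So ``pick a sufficiently regular cocharacter and apply BB'' would reproduce a known failure, not circumvent it; the singularity of $F_\gamma$ that you note is the symptom, but the disease is that the BB flow and the $\Ad$-integrality condition are not compatible for any one $\lambda$. Your fallback --- use the Arthur--Kottwitz reduction to make the truncation $\lambda$-stable --- also misreads the direction of that reduction: in the paper it decomposes $\xx_\gamma$ (or a larger truncation) into $F_\gamma$ plus strata fibred over lower-rank $F_\gamma^M$ (Proposition~\ref{klretraction}, Lemma~\ref{fundamental variant}), i.e.\ it reduces purity \emph{to} that of $F_\gamma$, and gives no handle on $F_\gamma$ itself. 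What the paper actually does for $\gl_3$ is: realize $F_\gamma$ as an intersection of two affine Schubert varieties with $\xx_\gamma$ (Proposition~\ref{intersection}), pave that Schubert intersection by a fixed Iwahori, observe the induced decomposition of $F_\gamma$ is not affine, and then regroup and re-pave region by region with varying Iwahori subgroups. Any honest attack on the general conjecture would have to confront this region-dependence head on, rather than assume a single torus flow suffices.
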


At this stage, we should make a few comments on the advantage of conjecture \ref{gkmconj2} over conjecture \ref{gkmconj1}. Firstly, since $F_{\gamma}$ is of finite type, we can count its rational points when the base field $k$ is taken to be a finite field. The counting result can give us a hint on how to construct affine pavings. Secondly, in a forthcoming paper (a very preliminary version can be found at \cite{chen2}), we outline a conjectural general procedure to construct affine pavings for cohomologically pure algebraic varieties admitting nice torus action.
Basically, we look at the moment graph of the torus action and introduce a formal Poncar\'e polynomial for each acyclic orientation of the moment graph. We conjecture that whenever the formal Poincar\'e polynomial attains the minimal, we obtain a generalised affine paving. The reader is referred to \S3.4 of \cite{chen2} for more details. To apply this conjecture, we need the ind-scheme in question to be of finite type. In fact, it is this conjecture that motivates us to the construction of $F_{\gamma}$.

We believe that $F_{\gamma}$ is an irreducible component of $\xx_{\gamma}$, and that it is also the normalisation of $\Lambda\backslash\xx_{\gamma}$. This picture can be put in another context which is better suited for deformation. Recall the following construction of Laumon \cite{lau}: Let $C$ be a rational projective curve with a unique planar singularity at $x\in C$ such that the completed local ring $\widehat{\co}_{C,x}$ is isomorphic to $\co[\gamma]$. Let $\overline{\mathrm{Jac}}_{C}$ be the compactified Jacobian of $C$, which is the moduli space of torsion-free coherent sheaves of generic rank $1$ and of degree $0$ on $C$. Among others, Laumon shows that there exists a morphism $\Lambda\backslash\xx_{\gamma} \to \overline{\mathrm{Jac}}_{C}$, which is finite, \emph{radicial} and surjective. In particular, this implies that the \'etale cohomologies of $\Lambda\backslash\xx_{\gamma}$ and $ \overline{\mathrm{Jac}}_{C}$ are isomorphic. In general, we have the ``formule de produit'' de Ng\^o \cite{ngo}, \S 4.15, which gives a uniformisation of the compactified Jacobian of a projective irreducible algebraic curve with planar singularities by products of the affine Springer fibers associated with the singularities.  
Based on these observations, we restate the conjectures of Goresky, Kottwitz, MacPherson and of Laumon \cite{lau} \S 3.2 as follows, which hopefully may lead to a proof of the purity conjecture by deformation.

\begin{conj}
Let $C$ be a projective geometrically integral algebraic curve over $k$. Suppose that all the singularities of $C$ are planar. Then the  normalisation of the compactified Jacobian of $C$ is cohomologically pure.
\end{conj}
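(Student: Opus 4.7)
The plan is to reduce the conjecture to the local purity of the fundamental domains $F_{\gamma}$ via a product formula, and then to attack that purity by a deformation argument in family.

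First, I would invoke the product formula of Laumon and Ngô: for a projective geometrically integral curve $C$ with planar singularities at $x_{1},\ldots, x_{n}$, the compactified Jacobian $\overline{\mathrm{Jac}}_{C}$ admits a uniformization
$$
\prod_{i=1}^{n} \xx_{\gamma_{i}} \longrightarrow \overline{\mathrm{Jac}}_{C},
$$
where each $\gamma_{i}$ encodes the inclusion of the completed local ring $\widehat{\co}_{C,x_{i}}$ into its normalization (planarity is what ensures that $\gamma_{i}$ can be taken regular semisimple in a $\kg\kl_{r_{i}}$). Up to a discrete quotient, this map is a torsor under a product of the lattices $\Lambda_{i}$ coming from each local factor.

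Second, after passing to normalizations, this uniformization should identify the normalization of $\overline{\mathrm{Jac}}_{C}$ with the product of normalizations of the quotients $\Lambda_{i}\backslash \xx_{\gamma_{i}}$. By the discussion preceding Theorem \ref{main1}, each such normalization is conjecturally the fundamental domain $F_{\gamma_{i}}$. Granting this identification, the Künneth formula reduces purity of the normalization of $\overline{\mathrm{Jac}}_{C}$ to purity of each $F_{\gamma_{i}}$. Combined with Theorem \ref{main1} applied to every Levi subgroup of $\gl_{r_{i}}$, the conjecture is therefore equivalent to the original Goresky-Kottwitz-Macpherson statement at each singularity, translated into the fundamental-domain formulation.

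Third, to attack purity of $F_{\gamma}$ itself, I would exploit its good behavior in families, as highlighted in the abstract. Starting from a $\gamma$ for which purity (or, better, an affine paving) is already known, for instance the equivalued case of \cite{gkm2} or the $\gl_{3}$ case developed later in this paper, one deforms $\gamma$ inside a versal family, using the rationality conjecture on the generating series of Poincaré polynomials as a constraint that prevents the Betti numbers, and hence the weights, from jumping uncontrollably along the family.

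The main obstacle lies in this last step: purity does not specialize automatically in a family of varieties, and without a global affine paving of $F_{\gamma}$ one has to find a robust substitute. Concretely, one needs either to construct an explicit stratification of the total space of the family with affine fibers, or to establish enough rigidity of the weight filtration on $H^{*}(F_{\gamma})$ along the family (perhaps through a semicontinuity statement tied to the conjectured rationality). Secondary but non-trivial technical points are the rigorous identification of the normalization of $\Lambda\backslash\xx_{\gamma}$ with $F_{\gamma}$, which is needed to make the second step rigorous rather than merely conjectural, and the compatibility of the product formula with normalization at non-Gorenstein planar singularities.
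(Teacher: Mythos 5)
This statement is a \emph{conjecture} in the paper, not a theorem; the paper offers no proof of it, so there is no ``paper's own proof'' to compare your attempt against. Its role in the paper is purely motivational: it is presented as a restatement, via the fundamental domain $F_{\gamma}$, of the Goresky--Kottwitz--Macpherson purity conjecture together with Laumon's conjecture from \cite{lau}, accompanied only by the one-sentence remark that this reformulation ``opens the door to a possible proof of the purity conjecture by deformation.''

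Your proposal is accordingly a research program rather than a proof, and you are candid about that. Your first two steps --- the product-formula uniformization of $\overline{\mathrm{Jac}}_{C}$ by a product of affine Springer fibers $\xx_{\gamma_{i}}$, and the identification of the normalization of $\Lambda_{i}\backslash\xx_{\gamma_{i}}$ with $F_{\gamma_{i}}$ --- are exactly the heuristic the paper has in mind when it states the conjecture; the second step is precisely the content of the sentence preceding it (``It is believed that $F_{\gamma}$ is an irreducible component of $\xx_{\gamma}$, and that it is also the normalization of $\Lambda\backslash\xx_{\gamma}$''), which the paper also leaves unproven. Your third step, deforming $\gamma$ in a family and using rationality of the generating series of Poincar\'e polynomials as a rigidity constraint, matches the direction the paper gestures toward, but the paper gives no argument there either. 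You correctly name the central obstruction yourself: purity does not specialize in families, so one needs a genuine mechanism --- an affine paving of the total space of the family, a semicontinuity statement for weights, or a nearby-cycle comparison --- none of which is supplied here or in the paper. The secondary gaps you flag (the unproven identification of $F_{\gamma}$ with the normalization of $\Lambda\backslash\xx_{\gamma}$, and compatibility of the product formula with normalization at non-Gorenstein planar singularities) are likewise genuine. The one thing I would insist on is that you make fully explicit that \emph{every} step of the outline that you hedge with ``should'' or ``conjecturally'' is in fact an open problem; as written, a reader could come away thinking the first two steps are established and only the deformation step is missing, when in reality the whole chain is conjectural.
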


Now we restrict to the group $G=\gl_{d+1}$. Let $T$ be the maximal torus of $G$ of diagonal matrices, let $B_{0}$ be the Borel subgroup of $G$ of the upper triangular matrices. Let $\Phi=\{\alpha_{i,j}\}$ be the root system of $G$ with respect to $T$, let $\alpha_{i}=\alpha_{i,i+1},\,i=1,\cdots,d,$ be the simple roots with respect to $B_{0}$. Let $\gamma\in \kt(\co)$ be regular, it is said to be in minimal form if 
$$
\val(\alpha_{i,j}(\gamma))=\min_{i\leq l\leq j-1}\{\val(\alpha_{l}(\gamma))\}, \quad \forall\, i<j.
$$
In this case, we say that the root valuation of $\gamma$ is $(\val(\alpha_{1}(\gamma)),\cdots,\val(\alpha_{d}(\gamma)))$. According to \cite{chen} appendix, we can always conjugate $\gamma$ such that it is in minimal form, and for $\bnn=(n_{1},\cdots,n_{d})\in \bn^{d}$, we can find $\gamma\in \kt(\co)$ in minimal form with root valuation $\bnn$.  

\begin{conj}\label{rationality}

The topology of $F_{\gamma}$ only depends on its root valuation. Let $P_{\bnn}(q)$ be its Poincaré polynomial. The power series
$$
Q(q;\vec{t}\;):=\sum_{n_{1}=1}^{+\infty}\cdots \sum_{n_{d}=1}^{+\infty}  P_{(n_{1},\cdots,n_{d})}(q)\, t_{1}^{n_{1}}\cdots t_{d}^{n_{d}}\in \bz [[ q; t_{1},\cdots,t_{d}]]
$$
is a rational fraction, i.e. it is an element of $\bz(q; t_{1},\cdots,t_{d})$. 
\end{conj}

In particular, the conjecture implies that it is enough to do finitely many computations in order to get the Poincar\'e polynomials of all the $F_{\gamma}$'s.

Chaudouard and Laumon \cite{cl} have calculated the $T$-equivariant homology of cohomologically pure truncated affine Springer fibers, following the strategy of Goresky, Kottwitz and MacPherson \cite{gkm1}. Assume that $F_{\gamma}$ is cohomologically pure, we then reduce the rationality of $Q(q; \vec{t}\;)$ to that of another power series, which admits a certain geometric interpretation via toric varieties. We refer the reader to \S 4 for more details.

For the groups $\gl_{2}$ and $\gl_{3}$, we have been able to calculate the Poincar\'e polynomial of $F_{\gamma}$, without any hypothesis. The same method works for $\gl_{4}$, but the combinatorics is too complicated to write down.

\begin{thm}\label{main2}

\begin{enumerate}

\item
For $G=\gl_{2}$, any element $\gamma\in \kt(\co)$ is automatically in minimal form. Let $n$ be its root valuation. Then the Poincar\'e polynomial of $F_{\gamma}$ is $\sum_{i=0}^{n}q^{i}$.

\item
For $G=\gl_{3}$, let $\bnn=(n_{1},n_{2})\in \bn^{2},\, n_{1}\leq n_{2}$, let $\gamma\in \kt(\co)$ be in minimal form with root valuation $\bnn$. The fundamental domain $F_{\gamma}$ can be paved in affine spaces, and the paving only depends on $\bnn$. Its Poincaré polynomial is 
\begin{eqnarray*}
P_{\bnn}(q)&=&\sum_{i=1}^{n_{1}}i(q^{4i-2}+q^{4i-4})+\sum_{i=2n_{1}}^{n_{1}+n_{2}-1}(2n_{1}+1)q^{2i}\\
&&+\sum_{i=n_{1}+n_{2}}^{2n_{1}+n_{2}-1}4(2n_{1}+n_{2}-i)q^{2i}+q^{4n_{1}+2n_{2}}.
\end{eqnarray*}
\end{enumerate}
\end{thm}

The rationality conjecture in these cases are easy consequences of the theorem.

\subsection*{Notations}

We fix a split maximal torus $A$ of $G$ over $k$. Let $\Phi=\Phi(G,A)$ be the root system of $G$ with respect to $A$, let $W$ be the Weyl group of $G$ with respect to $A$. For any subgroup $H$ of $G$ which is stable under the conjugation of $A$, we note $\Phi(H,A)$ for the roots appearing in $\lie(H)$. We fix a Borel subgroup $B_{0}$ of $G$ containing $A$. Let $\Delta$ be the set of simple roots with respect to $B_{0}$, let $(\varpi_{\alpha})_{\alpha\in \Delta}$ be the corresponding fundamental weights. To an element $\alpha\in \Delta$, we have a unique maximal parabolic subgroup $P_{\alpha}$ of $G$ containing $B_{0}$ such that $\Phi(N_{P_{\alpha}},A)\cap \Delta=\alpha$, where $N_{P_{\alpha}}$ is the unipotent radical of $P_{\alpha}$. This gives a bijective correspondence between the simple roots in $\Delta$ and the maximal parabolic subgroups of $G$ containing $B_{0}$. Any maximal parabolic subgroup $P$ of $G$ is conjugate to certain $P_{\alpha}$ by an element $w\in W$, the element $w\varpi_{\alpha}$ doesn't depend on the choice of $w$, we denote it by $\varpi_{P}$.

We use the $(G,M)$ notation of Arthur. Let $\cf(A)$ be the set of parabolic subgroups of $G$ containing $A$, let $\cl(A)$ be the set of Levi subgroups of $G$ containing $A$. For every $M\in \cl(A)$, we denote by $\cp(M)$ the set of parabolic subgroups of $G$ whose Levi factor is $M$, and by $\cf(M)$ the set of parabolic subgroups of $G$ containing $M$. For $P\in \cp(M)$, we denote by $P^{-}$ the opposite of $P$ with respect to $M$. Let $X^*(M)=\Hom(M, \bbg_m)$ and $\ka_M^{*}=X^*(M)\otimes\br$. The restriction $X^{*}(M)\to X^{*}(A)$ induces an injection $\ka_{M}^{*}\hookrightarrow \ka_{A}^{*}$. Let $(\ka_{A}^{M})^{*}$ be the subspace of $\ka_{A}^{*}$ generated by $\Phi(M,A)$. We have the decomposition in direct sums
$$
\ka_{A}^{*}=(\ka_{A}^{M})^{*}\oplus \ka_{M}^{*}.
$$

The canonical pairing 
$$
X_{*}(A)\times X^{*}(A)\to \bz
$$ 
can be extended bilinearly to $\ka_{A}\times \ka_{A}^{*}\to \br$, with $\ka_{A}=X_{*}(A)\otimes \br$. For $M\in \cl(A)$, let $\ka_{A}^{M}\subset \ka_{A}$ be the subspace orthogonal to $\ka_{M}^{*}$, and $\ka_{M} \subset \ka_{A}$ be the subspace orthogonal to $(\ka_{A}^{M})^{*}$. We have also dually the decomposition
$$
\ka_{A}=\ka_{M}\oplus \ka_{A}^{M},
$$
let $\pi_{M},\,\pi^{M}$ be the projections to the two factors. More generally, for $L,M\in \cf(A),\,M\subset L$, we also have a decomposition
$$
\ka_{M}=\ka_{L}\oplus \ka_{M}^{L}.
$$
To save notation, we also write $\pi_{L},\,\pi^{L}$ for the projections to the two factors.

We identify $X_{*}(A)$ with $A(F)/A(\co)$ by sending $\chi$ to $\chi(\ep)$. With this identification, the canonical surjection $A(F)\to A(F)/A(\co)$ can be viewed as
\begin{equation}\label{indexT}
A(F)\to X_{*}(A).
\end{equation}

We use $\Lambda_{G}$ to denote the quotient of $X_{*}(A)$ by the coroot lattice of $G$ (the subgroup of $X_{*}(A)$ generated by the coroots of $A$ in $G$). We have a canonical homomorphism
\begin{equation}\label{indexM}
G(F)\to \Lambda_{G},
\end{equation}
which is characterised by the following properties: it is trivial on the image of $G_{\mathrm{sc}}(F)$ in $G(F)$ ($G_{\mathrm{sc}}$ is the simply connected cover of the derived group of $G$), and its restriction to $A(F)$ coincides with the composition of (\ref{indexT}) with the projection of $X_{*}(A)$ to $\Lambda_{G}$. Since the morphism (\ref{indexM}) is trivial on $G(\co)$, it descends to a map
$$
\nu_{G}:\xx\to \Lambda_{G},
$$
whose fibers are the connected components of $\xx$.

Finally, we suppose that $\gamma\in \kt(\co)$ satisfies $\gamma\equiv 0 \mod \ep$ to avoid unnecessary complications.

\section{The fundamental domain}

\subsection{Truncated affine Springer fibers}

For $M\in \cl(A)$, the natural inclusion of $M(F)$ in $G(F)$ induces a closed immersion of $\xx^{M}$ in $\xx^{G}$. For $P=MN\in \cf(A)$, we have the retraction
$$
f_{P}:\xx\to \xx^{M}
$$ 
which sends $gK=nmK$ to $mM(\co)$, where $g=nmk,\,n\in N(F),\, m\in M(F),\, k\in K$ is the Iwasawa decomposition.

\begin{rem}
We want to emphasise that the retraction $f_{P}$ is not a morphism between ind-$k$-schemes. In fact, it is not even a continuous map. But it becomes a morphism when restricted to the inverse image of each connected component of $\xx^{M}$. 

To see this, for $\mu\in \Lambda_{M}$, let $\xx^{M,\mu}=\nu_{M}^{-1}(\mu)$, the inverse image $f_{P}^{-1}(\xx^{M,\mu})=N(F)\xx^{M,\mu}$ is a locally closed ind-$k$-scheme of $\xx$. The restriction  
$$
f_{P}:N(F)\xx^{M,\mu}\to \xx^{M,\mu}
$$
is easily seen to be an infinite dimensional homogeneous affine fibration. The problem with the global $f_{P}$ is that, while $\xx^{M}$ is the disjoint union of its connected components $\xx^{M,\mu}$, the affine grassmannian $\xx$ is not the disjoint union of $N(F)\xx^{M,\mu}$ as an ind-$k$-scheme. 

\end{rem}

More generally we can define $f^{L}_{P_{L}}:\xx^{L}\to \xx^{M}$ for $L\in \cl(A), \, L\supset M$ and $P_{L}\in \cp^{L}(M)$. These retractions satisfy the transitivity property: Suppose that $Q\in \cp(L)$ satisfies $Q \supset P$, then
$$
f_{P}=f^{L}_{P\cap L}\circ f_{Q}.
$$

For $P\in \cf(A)$, we have the function $H_{P}:\xx\to \ka_{M}^{G}$ which is the composition 
$$
H_{P}:\xx\xrightarrow{f_{P}}\xx^{M}\xrightarrow{\nu_{M}}\Lambda_{M}\to \ka_{M}^G.
$$

There is a notion of adjacency among the parabolic subgroups in $\cp(M)$: Two parabolic subgroups $P_{1}=MN_{1},\,P_{2}=MN_{2}\in \cp(M)$ are said to be \emph{adjacent} if both of them are contained in a parabolic subgroup $Q=LU$ such that $L\supset M$ and $\rk(L)=\rk(M)+1$. Given such an adjacent pair, we define an element $\beta_{P_{1},P_{2}}\in \Lambda_{M}$ in the following way: Consider the collection of elements in $\Lambda_{M}$ obtained from coroots of $A$ in $\kn_{1}\cap \kn_{2}^{-}$, we define $\beta_{P_{1},P_{2}}$ to be the minimal element in this collection, i.e. all the other elements are positive integral multiples of it. Note that $\beta_{P_{2},P_{1}}=-\beta_{P_{1},P_{2}}$, and if $M=A$, then $\beta_{P_{1},P_{2}}$ is the unique coroot which is positive for $P_{1}$ and negative for $P_{2}$.

\begin{prop}[Arthur\cite{a}]

Let $P_{1},\,P_{2}\in \cp(M)$ be two adjacent parabolic subgroups. For any $x\in \xx$, we have
$$
H_{P_{1}}(x)-H_{P_{2}}(x)=n(x,P_{1},P_{2})\cdot \beta_{P_{1},P_{2}},
$$
with $n(x, P_{1}, P_{2})\in \bz_{\geq 0}$.

\end{prop}

\begin{proof}

We give a proof for the case when $M=A$ is a split maximal torus of $G$, the general case follows by applying the projection from $\ka_{A}^{G}$ to $\ka_{M}^{G}$.

For any two adjacent Borel subgroups $B',B''\in \cp(A)$, let $P$ be the parabolic subgroup generated by $B'$ and $B''$. Let $P=LU$ be the Levi factorisation. The application $H_{B'}$ factor through $f_{P}$, i.e. we have commutative diagram
$$
\xymatrix{
\xx \ar[d]_{f_{P}} \ar[dr]^{H_{B'}} &\\
\xx^{L}\ar[r]_{H^{L}_{B'\cap L}}&\ka_{T}^{G}}
$$
and similarly for $H_{B''}$. Since $L$ has semisimple rank $1$, the proposition is thus reduced to $G=\mathrm{SL}_{2}$. In this case, let $A$ be the maximal torus of the diagonal matrices, $B'=\begin{pmatrix}*&*\\ & * 
\end{pmatrix}
$,
$
B''=\begin{pmatrix}*&\\ * & * 
\end{pmatrix}$, and we identify $\ka_{A}^{G}$ with the line $H=\{(x,-x)\mid x\in \br\}\subset \br^{2}$ in the usual way. By the Iwasawa decomposition, any point $x\in \xx$ can be written as
$
x=\begin{pmatrix}a&b \\ &d\end{pmatrix}K.
$ 
Let $m=\min\{\val(a),\,\val(b)\}$, $n=\val(d)$, then $m+n\leq \val(a)+\val(d)=0$ and
$$
H_{B'}(x)=(-n,n), \quad H_{B''}(x)=(m,-m).
$$
So 
$$
H_{B'}(x)-H_{B''}(x)=(-(n+m), n+m)=-(n+m)\cdot \beta_{B',B''},
$$
and the proposition follows.
\end{proof}

For any point $x\in \xx$, we write $\ec_{M}(x)$ for the convex hull in $\ka_{M}^{G}$ of the $H_{P}(x),\,P\in \cp(M)$. For any $Q\in \cf(M)$, we denote by $\ec^{Q}_{M}(x)$ the face of $\ec_{M}(x)$ whose vertices are $H_{P}(x),\,P\in \cp(M),\,P\subset Q$. When $M=A$, we simplify the notations to $\ec(x)$ and $\ec^{Q}(x)$ respectively.

\begin{defn}
A family $D=(\lambda_{P})_{P\in \cp(M)}$ of elements in $\ka_{M}^{G}$ is called a \emph{positive} $(G,M)$-\emph{orthogonal family} if it satisfies
$$
\lambda_{P_{1}}-\lambda_{P_{2}}\in \br_{\geq 0}\cdot \beta_{P_{1},P_{2}},
$$
for any two adjacent parabolic subgroups $P_{1},\, P_{2}\in \cp(M)$.
\end{defn}

Given such a positive $(G,M)$-orthogonal family, we will denote again by $D$ the convex hull of the $\lambda_{P}$'s. For $Q=LU\in \cf(M)$, parallel to $\ec_{M}^{Q}(x)$, we denote by $D^{Q}$ the face of $D$ whose vertices are $\lambda_{P},\,P\in \cp(M),\,P\subset Q$. With the projection $\pi^{L}$, it will also be seen as a positive $(L,M)$-orthogonal family.

Following Chaudouard and Laumon \cite{cl}, we define the \emph{truncated affine grassmannian} $\xx(D)$ to be
$$
\xx(D)=\{x\in \xx\mid \ec_{M}(x)\subset D\},
$$
and the \emph{truncated affine Springer fiber} $\xx_{\gamma}(D)$ to be the intersection $\xx_{\gamma}\cap \xx(D)$. It should be pointed out that both $\xx(D)$ and $\xx_{\gamma}(D)$ can have several connected components, and there is slight difference between different components. We give an illustration of this point in the coming example \ref{diffcomp}.

\begin{example}\label{tschubert}

Let $\lambda\in X_{*}(A)$ be a dominant cocharacter with respect to $B_{0}$, we have the positive $(G,A)$-orthogonal family $D=(\lambda_{B})_{B\in \cp(T)}$ with $\lambda_{wB_{0}}=w (\lambda)$. Let $\xx^{|\lambda|}(D)$ be the connected component of $\xx(D)$ containing $\epsilon^{\lambda}$, then 
$$
\xx^{|\lambda|}(D)=\sch(\lambda),
$$
where $\sch(\lambda)$ is the affine Schubert variety $\overline{K\epsilon^{\lambda}K/K}$.

To see this, we need the Bruhat-Tits decomposition of $\sch(\lambda)$. Let $I$ be the standard Iwahori subgroup, i.e. it is the pre-image of $B_{0}$ under the reduction $G(\co)\xrightarrow{\mod \ep}G(k)$. The Bruhat-Tits decomposition states that
$$
\sch(\lambda)=\bigcup_{\stackrel{\mu\in X_{*}(A)}{\mu\prec\lambda}}I\ep^{\mu}K/K,
$$
where $\prec$ means the Bruhat-Tits order on $X_{*}(A)$ with respect to $I$.

Now that $\sch(\mu)$ is an $A$-invariant projective algebraic variety, we see that 
$$
\lim_{t\to 0}\chi(t)x\in \sch(\lambda)^{A},
$$
for any point $x\in \sch(\lambda)$ and any regular cocharacter $\chi\in X_{*}(A)$. This implies that $\sch(\lambda)\subset \xx^{|\lambda|}(D)$.

Conversely, if there exists any point $x$ in $\xx^{|\lambda|}(D)\backslash \sch(\lambda)$, it must lie in $I\ep^{\nu}K/K$ for some $\ep^{\nu}\notin \sch(\lambda)^{A}$, i.e. the image of $\nu$ in $\ka_{A}^{G}$ will lie outside the convex polytope $D$. Now look at the affine Schubert cell $I\ep^{\nu}K/K$. Choose an element $a\in \kt$ such that the associated Moy-Prasad filtration gives $(G_{F})_{a,0}=I$. More precisely, the choice of $a$ satisfies
$$
\lie(I)=\bigoplus_{\stackrel{(\alpha,n)\in \Phi(G,T)\times \bz}{\alpha(a)+n\geq 0}}\kg_{\alpha}\ep^{n}+ \kg\ep^{N}, \quad \forall N\gg 0.
$$
The reader can refer to \cite{chen}, \S2 for a brief review of Moy-Prasad filtration. We have 
\begin{equation}\label{mpiso}
I\ep^{\nu}K/K\cong I/I\cap \Ad(\ep^{\nu})K\cong \bigoplus_{\stackrel{(\alpha,n)\in \Phi(G,T)\times \bz}{\alpha(a)+n\geq 0,\,\alpha(\nu)>n}}\kg_{\alpha}\ep^{n}.
\end{equation}
Let $U$ be the unipotent subgroup of $G$ with Lie algebra 
$$
\bigoplus_{\stackrel{\alpha\in \Phi(G,T)}{\alpha(\nu+a)>0}}\kg_{\alpha}.
$$
It follows from the isomorphism $(\ref{mpiso})$ that 
$$
I\ep^{\nu}K/K\subset U(\co)\ep^{\nu}K/K.
$$
Let $B\in \cp(A)$ be a Borel subgroup containing $U$. The above inclusion implies $H_{B}(x)=\nu\notin D$, contradictory to the hypothesis $x\in \xx^{|\lambda|}(D)$.


\end{example}

\begin{example}\label{diffcomp}

Let $G=\gl_{d+1}$, let $A$ be the maximal torus of the diagonal matrices. Let $D\subset \ka_{A}^{G}$ be a positive $(G,A)$-orthogonal family, we want to look at the difference between connected components of $\xx(D)$.

The map $\nu_{G}:\xx\to \Lambda_{G}\cong\bz$ sends $gK$ to $\val(\det(g))$ for any $g\in G(F)$. Denote $\nu_{G}^{-1}(n)$ by $\xx^{(n)}$. We can identify the central connected component $\xx^{(0)}$ with $\xx^{\SL_{d+1}}$. Let $\Pi_{i}=\diag(\ep,\cdots, \ep, 1,\cdots,1)$ with $i$ terms of $\ep$, $i=1, \cdots,d$. The translation by $\ep^{n}\Pi_{i}$ gives an isomorphism between $\xx^{(0)}$ and $\xx^{(n(d+1)+i)}$. 

Since $H_{B}(\ep^{n}x)=H_{B}(x)$ for any $x\in \xx,\,n\in \bz$, we only need to look at the differences between $\xx^{(0)}(D)$ and $\xx^{(i)}(D)$, for $i=1,\cdots, d$. Let $B_{0}$ be the Borel subgroup of the upper triangular matrices. Let $\{\alpha_{i}\}_{i=1}^{d}$ be the simple roots of $G$ with respect to $B_{0}$. Let $\{\varpi_{i}^{\vee}\}_{i=1}^{d}$ be the corresponding fundamental coweights of $G$, i.e. they are elements in $X_{*}(A)\otimes \bq$ characterised by $\alpha_{i}(\varpi_{j}^{\vee})=\delta_{ij},\,\forall\, i,j=1,\cdots, d.$ Since the translation by $\Pi_{i}$ induces an isomorphism between $\xx^{(0)}$ and $\xx^{(i)}$, the image of $A$-invariant points $\xx^{(0),A}$ and $\xx^{(i),A}$ in $\ka_{A}^{G}$ will differ by a translation of $\varpi_{i}^{\vee}$. Hence a translation by $\Pi_{i}$ is necessary to get an isomorphism between $\xx^{(0)}(D)$ and $\xx^{(i)}(D+\varpi_{i}^{\vee})$, where $D+\varpi_{i}^{\vee}$ is the translation of $D$ by $\varpi_{i}^{\vee}$.

\end{example}

\subsection{The fundamental domain}

We begin by recalling several results concerning the action of $T(F)$ on the affine Springer fiber $\xx_{\gamma}$. Let $\Lambda=\pi_{0}(T(F))$, it is a discrete free abelian group.

\begin{prop}[Kazhdan-Lusztig \cite{kl}]\label{klfinite}

The group $\Lambda$ acts freely on $\xx_{\gamma}$ with the quotient $\Lambda\backslash \xx_{\gamma}$ being a projective $k$-scheme, and the quotient map $\xx\to \Lambda\backslash \xx_{\gamma}$ is an \'etale Galois covering.

\end{prop}

A point $x=gK\in \xx_{\gamma}$ is said to be \emph{regular} if the image of $\Ad(g^{-1})\gamma$ under the reduction $\kg(\co)\to \kg(k)$ is regular. Let $\xx_{\gamma}^{\reg}$ be the open sub variety of $\xx_{\gamma}$ of the regular points.

\begin{prop}[Bezrukavnikov \cite{b}]\label{bezru}
The group $T(F)$ acts transitively on $\xx_{\gamma}^{\reg}$.
\end{prop}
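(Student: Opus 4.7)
The plan is to translate the transitivity of the $T(F)$-action into a $G(\co)$-conjugacy statement for regular elements of $\kg(\co)$, and to establish that conjugacy by combining Kostant's theorem on regular elements with Hensel's lemma applied to the centralizer torsor.

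Let $x_{1}=g_{1}K$ and $x_{2}=g_{2}K$ be two points of $\xx_{\gamma}^{\reg}$, and set $\gamma_{i}':=\Ad(g_{i}^{-1})\gamma\in \kg(\co)$. Since $\gamma$ is regular semisimple over $F$, its centralizer in $G(F)$ equals $T(F)$. Consequently, for any $h\in K$ satisfying $\Ad(h^{-1})\gamma_{2}'=\gamma_{1}'$, the element $t:=g_{2}hg_{1}^{-1}$ automatically satisfies $\Ad(t^{-1})\gamma=\gamma$, hence lies in $T(F)$, and by construction $t\cdot x_{1}=x_{2}$. Thus the proposition reduces to the assertion that $\gamma_{1}'$ and $\gamma_{2}'$ are $G(\co)$-conjugate.

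Both $\gamma_{1}'$ and $\gamma_{2}'$ are $G(F)$-conjugate to $\gamma$, so they have equal image in the invariant-theoretic quotient $(\kg/\!/G)(\co)$. Reducing modulo $\ep$, the elements $\bar{\gamma}_{1}',\bar{\gamma}_{2}'\in \kg(k)$ have equal image in $\kg/\!/G$ and are both regular (by the definition of $\xx_{\gamma}^{\reg}$). Kostant's theorem on regular elements (available under the standing hypothesis $\mathrm{char}(k)>\rk(G)$) ensures that $\bar{\gamma}_{1}'$ and $\bar{\gamma}_{2}'$ are $G(k)$-conjugate. Now consider the transporter scheme
$$
X\colon R\longmapsto \{h\in G(R)\,\mid\,\Ad(h^{-1})\gamma_{2}'=\gamma_{1}'\}
$$
over $\co$; it is a torsor under the centralizer group scheme $Z_{G}(\gamma_{1}')$. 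Regularity of $\bar{\gamma}_{1}'$ implies that $Z_{G}(\gamma_{1}')$ is smooth over $\co$ of relative dimension $\rk(G)$, and hence $X$ is smooth. The previous paragraph supplies a $k$-point of $X$, and Hensel's lemma lifts it to an $\co$-point $h\in K$, providing the desired $t$.

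The two key technical ingredients are the smoothness of the centralizer $Z_{G}(\gamma_{1}')$ over $\co$ (which rests on regularity of its special fibre) and Kostant's conjugacy theorem for regular elements in positive characteristic; both are standard once the hypothesis $\mathrm{char}(k)>\rk(G)$ is in force. The main conceptual step is the initial reduction to $G(\co)$-conjugacy, exploiting $Z_{G(F)}(\gamma)=T(F)$; once that is in place, the proof is driven purely by descent to the special fibre and a lifting argument.
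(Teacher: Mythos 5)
The paper does not actually prove this proposition; it simply refers to Bezrukavnikov's paper \cite{b}, so there is no ``paper's own proof'' to compare against. Your argument is the standard ``regular centralizer'' proof in the style of Ng\^o: reduce the transitivity of $T(F)$ to the assertion that the two sections $\Ad(g_i^{-1})\gamma\in\kg(\co)$ are $G(\co)$-conjugate, observe that the centralizer of a regular $\co$-section of $\kg$ is a smooth group scheme (being the pullback along $\gamma_1'\colon\mathrm{Spec}(\co)\to\kg^{\reg}$ of the smooth universal regular centralizer), and lift a conjugator from the special fibre by Hensel. The initial reduction and the smoothness-plus-Hensel steps are correct and well adapted to the setting.

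The one step that needs repair is the appeal to Kostant's conjugacy theorem over $k$. Kostant's theorem says that regular elements lying over the same point of $\kg/\!/G$ form a single $G$-orbit \emph{over an algebraically closed field}. Over $k=\fq$ this can fail: since the paper assumes $\gamma\equiv 0\bmod\ep$, the reductions $\bar\gamma_1',\bar\gamma_2'$ are regular nilpotent, and in $\mathfrak{sl}_2(\fq)$ with $q$ odd there are two $\mathrm{SL}_2(\fq)$-conjugacy classes of regular nilpotents (the obstruction lives in $H^1(\fq,Z_G(\bar\gamma_1'))$, which is nontrivial precisely because the centralizer of a regular nilpotent is disconnected when $Z(G)$ is not connected). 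So as written, the claim ``$\bar\gamma_1'$ and $\bar\gamma_2'$ are $G(k)$-conjugate'' is not justified. The fix is simply to run the entire argument after base change to $\bar k$, i.e.\ with $F=\bar k((\ep))$ and $\co=\bar k[[\ep]]$: this is harmless for Hensel's lemma, it is where Kostant's theorem applies cleanly, and it is in any case the geometric statement that the paper actually uses (the action of $\Lambda$ on the irreducible components of $\xx_{\gamma,\bar k}$). With that adjustment the proof is complete.
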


\begin{prop}[Ng\^o \cite{ngo}]

The subvariety $\xx_{\gamma}^{\reg}$ is open dense in $\xx_{\gamma}$. 

\end{prop}

The last proposition is proved in an indirect way. In fact, one needs to use Laumon's observation on the affine Springer fibers and the compactified Jacobians, as recalled briefly in the introduction, and to use a corresponding property of the compactified Jacobians. As a consequence of the above two propositions, the abelian group $\Lambda$ acts freely and transitively on the irreducible components of $\xx_{\gamma}$.

Let $S$ be the maximal $F$-split subtorus of $T$. Let $M_{0}$ be the connected component of the centraliser of $S$ in $G$, then $T$ is anisotropic modulo center in $M_{0,F}$. We also have $\Lambda=\Lambda_{M_{0}}$. Without any loss of generality, we may assume that $M_{0}$ contains $A$.

Goresky, Kottwitz and MacPherson \cite{gkm3} have given a characterisation of the regular points in $\xx_{\gamma}$. To formulate it, we need to define an invariant $n(\gamma,P_{1},P_{2})\in \bz_{\geq 0}$ for any two adjacent parabolic subgroups $P_{1}=M_{0}N_{1},P_{2}=M_{0}N_{2}\in \cp(M_{0})$. The Galois group $\gal(\overline{F}/F)$ acts on the set of roots of $T_{\overline{F}}$ in $\kn_{1}\cap \kn_{2}^{-}$. Let $\alpha$ be such a root, let $F_{\alpha}$ be the field of definition of $\alpha$. Let $\val_{F_{\alpha}}$ be the valuation normalised such that any uniformiser in $F_{\alpha}$ has valuation $1$, i.e. $\val_{F_{\alpha}}(\ep)=[F_{\alpha}:F]$. Let $m_{\alpha}$ be the unique positive integer such that the image of $\alpha^{\vee}$ in $\Lambda_{M_{0}}$ is equal to $m_{\alpha}\cdot \beta_{P_{1},P_{2}}$. Now we define
$$
n(\gamma, P_{1},P_{2})=\sum \val_{F_{\alpha}}(\alpha(\gamma))\cdot m_{\alpha},
$$
where the sum is taken over a set of representatives $\alpha$ of the orbits of $\gal(\overline{F}/F)$ on the set of roots of  $T_{\overline{F}}$ in $\kn_{1}\cap \kn_{2}^{-}$.

\begin{prop}[Goresky-Kottwitz-MacPherson]\label{gkmreg}

Let $x\in \xx_{\gamma}$. 

\begin{enumerate}

\item For any two adjacent parabolic subgroups $P_{1}, P_{2}\in \cp(M_{0})$, we have
$$
n(x,P_{1},P_{2})\leq n(\gamma,P_{1},P_{2}).
$$

\item The point $x$ is regular in $\xx_{\gamma}$ if and only if the following two conditions holds:
\begin{enumerate}
\item
the point $f_{P}(x)$ is regular in $\xx^{M_{0}}_{\gamma}$ for all $P\in \cp(M_{0})$;

\item
for any two adjacent parabolic subgroups $P_{1}, P_{2}$ in $\cp(M_{0})$, one has
$$
n(x,P_{1},P_{2})=n(\gamma,P_{1},P_{2}).
$$

\end{enumerate}
\end{enumerate}
\end{prop}

In the proof of Goresky, Kottwitz and MacPherson, the general case is deduced from the unramified case, by base change to the splitting field of $\gamma$. We will reproduce their proof in the unramified case.

\begin{lem}[Goresky-Kottwitz-MacPherson]\label{regularreduction}

Let $\gamma\in \ka(\co)$. A point $x\in \xx_{\gamma}$ is regular if and only if for any Levi subgroup $M\in \cl(A)$ of semisimple rank $1$, the point $f_{P}(x)\in \xx_{\gamma}^{M}$ is regular for any $P\in \cp(M)$.

\end{lem}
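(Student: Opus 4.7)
The strategy is to translate both conditions into statements about the reduction $\bar Y := \Ad(g^{-1})\gamma \bmod \ep \in \kg(k)$ via the Iwasawa decomposition, and then exploit multiple Iwasawa decompositions (one for each rank-one Levi $M$ and each $P \in \cp(M)$) to access enough parabolic projections of $\bar Y$ to detect regularity.

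For $M \in \cl(T)$ of semisimple rank one and $P = MN \in \cp(M)$, write $g = nmk$ in Iwasawa form with $n \in N(F)$, $m \in M(F)$, $k \in K$; then $f_P(x) = mM(\co)$ by definition. Since $\Ad(n^{-1})$ acts trivially on $\kp/\kn$, one has $\Ad(n^{-1})\gamma - \gamma \in \kn(F)$, so
\[
\Ad(g^{-1})\gamma = \Ad(k^{-1})\bigl(\Ad(m^{-1})\gamma + \Ad(m^{-1})(\Ad(n^{-1})\gamma - \gamma)\bigr),
\]
with the two summands in $\km(F)$ and $\kn(F)$ respectively. Combined with $\Ad(g^{-1})\gamma \in \kg(\co)$ and $k \in K$, this forces $\Ad(m^{-1})\gamma \in \km(\co)$ (so in particular $f_P(x) \in \xx_\gamma^M$) and the second summand into $\kn(\co)$; reducing modulo $\ep$, $\bar Y$ is $G(k)$-conjugate through $\bar k$ to $\xi := \bar Y_M + \bar Y_N \in \kp(k)$, where $\bar Y_M := \Ad(m^{-1})\gamma \bmod \ep$ is the reduction whose regularity in $\km(k)$ is equivalent to regularity of $f_P(x)$.

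For the forward implication, fix any $M$ and $P$; the element $\xi$ is $G(k)$-conjugate to $\bar Y$, hence regular in $\kg(k)$. The $\km$-component of $[\omega_M + \omega_N, \xi_M + \xi_N]$ equals $[\omega_M, \bar Y_M]$, so projection $\kp \to \km$ restricts to a linear map $Z_\kp(\xi) \to Z_\km(\bar Y_M)$. Combining the general rank bound $\dim Z_\kp(\xi) \geq \rk P = \rk G$, the inclusion $Z_\kp(\xi) \subset Z_\kg(\xi)$, and the hypothesis $\dim Z_\kg(\xi) = \rk G$, one gets $\dim Z_\kp(\xi) = \rk G$; a dimension analysis of this projection (using that $\ad(\xi)$ preserves $\kn$) will then force $\dim Z_\km(\bar Y_M) \leq \rk M$, i.e.\ $\bar Y_M$ regular in $\km$.

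For the reverse implication I would argue by contrapositive using the Jordan decomposition $\bar Y = \bar Y_s + \bar Y_n$. Non-regularity of $\bar Y$ means that $\bar Y_n$ is not a regular nilpotent inside $\mathfrak{l} := \lie Z_G(\bar Y_s)^\circ$; from this structural defect one extracts a rank-one Levi $M \subset G$ and $P \in \cp(M)$ such that, when $g$ is Iwasawa-decomposed along $P$, the resulting $\bar Y_M$ inherits the non-regularity. The main obstacle will be this extraction step: from the Jordan structure of a non-regular $\bar Y$ one must identify the correct rank-one witness $(M, P)$ and verify that the Iwasawa decomposition of $g$ along $P$ transports the non-regularity from $\kg$ down to $\km$ without being absorbed into the $\kn$-component; elementary examples of non-regular nilpotents in $\kp$ whose $\km$-projection happens to be regular show that one cannot do this using a single rank-one Levi alone, so the argument must draw on the full collection of rank-one Levis simultaneously.
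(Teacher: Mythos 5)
Your setup reproduces the paper's: your $\xi=\bar Y_M+\bar Y_N$ is exactly the paper's $u_{P}(x)$, and you correctly note that the projection $\kp\to\km$ carries $Z_{\kp}(\xi)$ into $Z_{\km}(\bar Y_M)$ because the $\km$-component of $[\omega_M+\omega_N,\xi_M+\xi_N]$ is $[\omega_M,\xi_M]$. But both directions contain genuine gaps.

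\emph{Necessity.} The dimension count you sketch cannot close as described: the inclusion $\pi(Z_\kp(\xi))\subset Z_\km(\bar Y_M)$ only yields a \emph{lower} bound on $\dim Z_\km(\bar Y_M)$, while what is needed is the \emph{upper} bound $\dim Z_\km(\bar Y_M)\le\rk M$. No amount of ``dimension analysis of this projection'' along the filtration $\kn\subset\kp$ gives that upper bound (the connecting homomorphism in the kernel--cokernel sequence works against you). What is really being used here is a structural fact: an element of $\kg(k)$ which is regular and lies in a parabolic subalgebra $\kp$ projects to a regular element of $\km$. This is exactly the content of a lemma from \cite{gkm3}, proved via Jordan decomposition and Kostant's description of regular nilpotents. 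The paper dispenses with the whole step in one line (``since $u_P(x)$ lies in the same conjugacy class as $u_G(x)$'') because it treats this fact as known.

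\emph{Sufficiency.} You leave this direction open, and the obstacle you identify---that projection to a single rank-one Levi does not detect regularity of a nilpotent---is precisely the point the paper exploits in the opposite way. The paper fixes the Borel $B$, reduces to showing $u_{B}(x)$ is regular, and considers only the parabolics $Q_\alpha\supset B$ with Levi $M_\alpha$ for $\alpha\in\Delta_B$. The observation you are missing is a consequence of the standing convention $\gamma\equiv 0\pmod\ep$: for $b\in B(F)$, $\Ad(b^{-1})\gamma$ and $\gamma$ have the same $\kt$-component, which reduces to $0$ mod $\ep$; hence $u_B(x)$ is a nilpotent element of $\kn_B(k)$. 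For nilpotents in a Borel, Kostant's criterion says that regularity in $\kg$ is equivalent to the non-vanishing of each simple-root component, and the latter is exactly the regularity of each projection to $\km_\alpha$, $\alpha\in\Delta_B$. Without the nilpotence reduction the argument would actually be false: the element $\diag(1,0,1)\in\kgl_3(k)$ is not regular yet has regular images in both $\km_{\alpha_1}$ and $\km_{\alpha_2}$. Your instinct to pass to the Jordan decomposition and argue by contrapositive is, in this light, a detour that the assumption on $\gamma$ makes unnecessary.
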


\begin{proof}

For $x=gK\in \xx_{\gamma}$, the image of $\Ad(g)^{-1}\gamma$ under the reduction $\kg(\co)\to \kg(k)$ is well defined up to conjugacy, we denote it by $u_{G}(x)$. For any $P=MN\in \cf(A)$, let $g=pk,\,p\in P(F),\, k\in K$, then $\Ad(p)^{-1}\gamma\in \kp(F)\cap \kg(\co)=\kp(\co)$. Its image in $\kp(k)$ under the reduction is well defined up to conjugacy, we will denote it by $u_{P}(x)$. It is obvious that $u_{P}(x)$ goes to $u_{M}(f_{P}(x))$ under the projection $\kp\to \km$. So if $u_{G}(x)$ is regular, then $u_{M}(f_{P}(x))$ is regular since $u_{P}(x)$ lies in the same conjugacy class as $u_{G}(x)$ in $\kg(k)$. This proves the necessary part of the lemma.

For sufficiency, it is enough to prove that $u_{B_{0}}(x)$ is regular. For $\alpha\in \Delta$, let $Q_{\alpha}$ be the parabolic subgroup generated by $B_{0}$ and $s_{\alpha}\cdot B_{0}$, where $s_{\alpha}\in W$ is the simple reflection associated to $\alpha$. Let $Q_{\alpha}=M_{\alpha}N_{\alpha}$ be the Levi decomposition, then $M_{\alpha}$ is of semisimple rank $1$. Now $u_{B_{0}}(x)$ goes to $u_{M_{\alpha}}(f_{Q_{\alpha}}(x))$ under the composition $\kb_{0}\hookrightarrow \kq_{\alpha}\twoheadrightarrow \km_{\alpha}$. Since $u_{M_{\alpha}}(f_{Q_{\alpha}}(x))$ is regular in $\km_{\alpha}$ for any $\alpha\in \Delta$, this implies that $u_{B_{0}}(x)$ is regular.

\end{proof}

\begin{proof}[Proof of proposition \ref{gkmreg} when $M_{0}=A$.]

First of all, observe that for any $x,y\in \xx$ such that $y$ lies in the closure of the orbit $T(\co)\cdot x$, we have $\ec(y)\subset \ec(x)$. Now that $\xx_{\gamma}^{\reg}$ is dense open in $\xx_{\gamma}$, it suffices to prove the second assertion. By lemma \ref{regularreduction}, it suffices to prove the proposition for $G=\mathrm{GL}_{2}$. This follows from proposition \ref{classical example}, where we will pick a particular regular point $x_{0}\in \xx_{\gamma}^{\reg}$ and calculate that
$$
H_{B}(x_{0})=(\val(\alpha(\gamma)),\,0),\quad H_{B^{-}}(x_{0})=(0, \,\val(\alpha(\gamma))).
$$
It is obvious that $H_{B}(x_{0})-H_{B^{-}}(x_{0})=\val(\alpha(\gamma))\cdot \alpha^{\vee}$.

\end{proof}

The above results motivate the following definition.

\begin{defn}
Take a regular point $x_{0}\in \xx_{\gamma}^{\reg}$. Let 
$$
F_{\gamma}=\{x\in \xx_{\gamma}\mid \ec_{M_{0}}(x)\subset \ec_{M_{0}}(x_{0}), \,\nu_{G}(x)=\nu_{G}(x_{0})\}.
$$
We call it \emph{the fundamental domain} of $\xx_{\gamma}$. \end{defn}

It is clear that different choice of $x_{0}\in \xx_{\gamma}^{\reg}$ gives rise to isomorphic fundamental domain. It is also clear that $F_{\gamma}$ contains an irreducible component of $\xx_{\gamma}$, but it is more subtle whether they are isomorphic.

\begin{prop}

The fundamental domain $F_{\gamma}$ is a $k$-scheme of finite type. It is also the fundamental domain of $\xx_{\gamma}$ with respect to the action of $\Lambda$ in the usual sense, i.e. we have $\xx_{\gamma}=\Lambda\cdot F_{\gamma}$, and any two translations of $F_{\gamma}$ by elements of $\Lambda$ intersect in a closed sub variety of dimension strictly less than that of $F_{\gamma}$. 

\end{prop}

\begin{proof}

Since $\gamma$ is anisotropic modulo center in $M_{0}(F)$, the connected components of $\xx_{\gamma}^{M_{0}}$ are projective $k$-schemes by proposition \ref{klfinite}. This implies that there exists a bounded convex polytope $\Sigma_{1}$ in $\ka_{A}^{M_{0}}$ such that $\pi^{M_{0}}(\ec^{P}(x))=\ec^{(M_{0})}(f_{P}(x))\subset \Sigma_{1}$ for any point $x\in F_{\gamma}$, $P\in \cp(M_{0})$, here $\ec^{(M_{0})}(f_{P}(x))$ denotes the convex hull of $H_{B'}(f_{P}(x))$ in $\ka_{A}^{M_{0}}$ for $B'\in \cp^{M_{0}}(A)$. On the other hand, $\pi_{M_{0}}(\ec(x))\subset \ec_{M_{0}}(x_{0})$ by definition of $F_{\gamma}$. By the orthogonal decomposition $\ka_{A}^{G}=\ka_{A}^{M_{0}}\oplus \ka_{M_{0}}^{G}$ and the fact that all $\ec(x)$ are positive $(G,T)$-orthogonal family, we see that there exists a bounded convex polytope $\Sigma_{2}$ such that $\ec(x)\subset \Sigma_{2}, \,\forall x\in F_{\gamma}$. By suitably enlarging $\Sigma_{2}$, we can assume that $\Sigma_{2}=\ec((w\lambda)_{w\in W})$ for some dominant cocharacter $\lambda\in X_{*}(A)$ with $\ep^{\lambda}$ lying on the connected component of $\xx$ containing $x_{0}$. By example \ref{tschubert}, we have $F_{\gamma}\subset \sch(\lambda)$, so it must be of finite type.

The assertion that $\xx_{\gamma}=\Lambda\cdot F_{\gamma}$ is implied by the construction of $F_{\gamma}$. The last assertion is due to the fact that any two distinct translations of $F_{\gamma}$ contains no regular points in common.

\end{proof}

\subsection{Examples for $\gl_{d}$}

Let $G=\gl_{d}$, let $T$ be the maximal torus of $G$ of the diagonal matrices, let $B_{0}$ be the Borel subgroup of $G$ of the upper triangular matrices and $B_{0}^{-}$ the opposite of $B_{0}$ with respect to $T$. For each regular element $\gamma\in \kt(\co)$, we have a particular choice of a regular point $x_{0}$ on $\xx_{\gamma}$, which we call \emph{the Kostant regular point}. Let $x_{0}\in \xx_{\gamma}$ be the point representing the lattice $\co[\gamma]$ sitting inside $F[\gamma]\cong F[X]/(X-\gamma_{1})\oplus\cdots \oplus F[X]/(X-\gamma_{d})\cong F^{d}$, where $\gamma_{i}$ are the eigenvalues of $\gamma$. Taking $\{1,\gamma,\cdots, \gamma^{d-1}\}$ as a basis of $F[\gamma]$, we check easily that $x_{0}$ is a regular point.

\begin{prop}\label{classical example}

For $\sigma\in \ks_{d}$, we have 
$$
f_{\sigma B_{0}^{-}}(x_{0})=\sigma\left[\co\oplus \bigoplus_{i=2}^{d}\kp^{\sum_{j=1}^{i-1}\val(\alpha_{\sigma^{-1}(i),\sigma^{-1}(j)}(\gamma))}\right].
$$

\end{prop}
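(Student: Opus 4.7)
The strategy is to compute $H_{\sigma B^{-}}(x_{0})$ directly from the definitions, exploiting the concrete lattice description of $x_{0}$.

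First I would reformulate $H_{B'}$ for a Borel $B'\supset T$ in lattice terms. Given the complete flag $F_{\bullet}=(0\subset F_{1}\subset\cdots\subset F_{d}=F^{d})$ stabilized by $B'$ and a lattice $L\subset F^{d}$ representing $gK\in\xx$, the Iwasawa decomposition $g=ntk$ with respect to $B'$ shows that the rank-one subquotients $(L\cap F_{i})/(L\cap F_{i-1})\subset F_{i}/F_{i-1}$ record exactly the valuations of the diagonal part $t$. Writing $F_{i}=\mathrm{span}(e_{\tau(1)},\ldots,e_{\tau(i)})$ for the permutation $\tau$ encoding $B'$, and letting $\chi_{j}\in X_{*}(T)$ denote the coordinate cocharacter associated with $e_{j}$, we get $H_{B'}(gK)=\sum_{i}a_{i}\,\chi_{\tau(i)}$, where $a_{i}$ is the valuation of a generator of $(L\cap F_{i})/(L\cap F_{i-1})$. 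For $B'=\sigma B^{-}\sigma^{-1}$ the permutation is $\tau=\sigma w_{0}$, hence $F_{i}^{\sigma}=\mathrm{span}(e_{\sigma(d)},e_{\sigma(d-1)},\ldots,e_{\sigma(d-i+1)})$.

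The core computation is then an explicit description of $L_{0}\cap F_{i}^{\sigma}$ via the polynomial parametrization. Since $\gamma$ is regular, Cayley--Hamilton gives $L_{0}=\{P(\gamma)\,\mid\,P\in\co[X]_{\leq d-1}\}$, and under $F[\gamma]\cong F^{d}$ the vector $P(\gamma)$ is $(P(\gamma_{1}),\ldots,P(\gamma_{d}))$. Thus $P(\gamma)\in F_{i}^{\sigma}$ if and only if $P(\gamma_{\sigma(j)})=0$ for all $j=1,\ldots,d-i$, equivalently $P=Q\cdot\prod_{j=1}^{d-i}(X-\gamma_{\sigma(j)})$ with $Q\in\co[X]_{\leq i-1}$ (divisibility in $\co[X]$ is legitimate since each $\gamma_{\sigma(j)}\in\co$). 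Projecting to the $\sigma(d-i+1)$-coordinate to land in $F_{i}^{\sigma}/F_{i-1}^{\sigma}\cong F\,\bar{e}_{\sigma(d-i+1)}$, one obtains $Q(\gamma_{\sigma(d-i+1)})\prod_{j=1}^{d-i}(\gamma_{\sigma(d-i+1)}-\gamma_{\sigma(j)})$. As $Q$ varies over $\co[X]_{\leq i-1}$ and $\gamma_{\sigma(d-i+1)}\in\co$, the factor $Q(\gamma_{\sigma(d-i+1)})$ ranges over all of $\co$, so the quotient lattice equals $\kp^{v_{i}}\bar{e}_{\sigma(d-i+1)}$ with $v_{i}=\sum_{j=1}^{d-i}\val(\alpha_{\sigma(d-i+1),\sigma(j)}(\gamma))$.

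Reindexing via $i\mapsto d-i+1$ converts $v_{i}$ into the exponent $\sum_{j=1}^{i-1}\val(\alpha_{\sigma(i),\sigma(j)}(\gamma))$ appearing in the statement, and writing the resulting cocharacter $\sum_{i}v_{i}\,\chi_{\sigma(d-i+1)}$ in lattice notation recovers $\sigma^{-1}\bigl[\bigoplus_{i}\kp^{k_{i}}e_{i}\bigr]$, where the outer $\sigma^{-1}$ is the standard Weyl-group action on diagonal lattices under the identification $X_{*}(T)\simeq\{\bigoplus_{i}\kp^{a_{i}}e_{i}\}$. The main obstacle is purely bookkeeping: matching the ordering conventions on the flag of $\sigma B^{-}\sigma^{-1}$ (controlled by $\sigma w_{0}$) with the indexing in the statement, and interpreting the outer $\sigma^{-1}$ correctly. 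The arithmetic heart of the argument---that each successive quotient has the predicted valuation---is the one-line polynomial evaluation above.
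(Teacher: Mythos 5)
Your argument is essentially the paper's own, rephrased in flag/lattice language rather than matrix language: the paper writes $\co[\gamma]=g\co^d$ with $g$ the Vandermonde matrix, reduces $g$ to lower-triangular form by column operations (for $B^-$), and reads off the diagonal entries $\prod_{j<i}(\gamma_i-\gamma_j)$; your computation of the subquotients $L_0\cap F_i^{\sigma}/L_0\cap F_{i-1}^{\sigma}$ via the factorization $P=Q\cdot\prod_{j}(X-\gamma_{\sigma(j)})$ is precisely the polynomial interpretation of those same column operations. The only presentational difference is that you treat general $\sigma$ directly via the flag of $\sigma B^-\sigma^{-1}$, whereas the paper observes from the symmetric form of $g$ that it suffices to handle $\sigma=e$.
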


\begin{proof}

Let $\{e_{1},\cdots,e_{d}\}$ be the natural basis of $F^{d}$, the vectors $\gamma^{s}\in \co[\gamma],\, s=0,1,\cdots, d-1,$ correspond to the vectors $\sum_{i=1}^{d}\gamma_{i}^{s}e_{i}$ in $F^{d}$. Let $g$ be the matrix
$$
\begin{bmatrix}
1&\gamma_{1}&\cdots&\gamma_{1}^{d-1}\\
1&\gamma_{2}&\cdots &\gamma_{2}^{d-1}\\
\vdots&\vdots&&\vdots\\
1&\gamma_{d}&\cdots &\gamma_{d}^{d-1}
\end{bmatrix},
$$
then $\co[\gamma]=g \co^{d}$. From this expression and the equality $f_{\sigma B_{0}^{-}}(x_{0})=\sigma\big(f_{B_{0}^{-}}(\sigma^{-1}(x_{0}))\big)$, we see that it suffices to prove the proposition for the standard $B_{0}^{-}$.

After certain elementary operations on the columns, the matrix $g$ can be put in lower triangular form with $1,\,\gamma_{2}-\gamma_{1},\,(\gamma_{3}-\gamma_{2})(\gamma_{3}-\gamma_{1}),\cdots, \prod_{i=1}^{d-1}(\gamma_{d}-\gamma_{i})$ on the diagonal from top to bottom, from which the claim for $f_{B_{0}^{-}}(x_{0})$ follows easily.  
\end{proof}

Let $\gamma\in \kt(\co)$ be regular in minimal form, suppose that its valuation data $(n_{1},\cdots,n_{d-1})$ satisfies $n_{1}\leq n_{2}\leq \cdots \leq n_{d-1}$, then the fundamental domain $F_{\gamma}$ can be written as the intersection of $\xx_{\gamma}$ with two affine Schubert varieties. First of all, we identify $X_{*}(T)$ with $\bz^{d}$ in the natural way. We fix
\begin{eqnarray*}
\mu&=&\left(0,\,n_{1},\,n_{1}+n_{2},\,\cdots,\,\sum_{i=1}^{d-1}n_{i}\right),\\
\lambda&=&\left((d-1)n_{1},\,n_{1}+(d-2)n_{2},\,n_{1}+n_{2}+(d-3)n_{3},\,\cdots,\sum_{i=1}^{d-1}n_{i},\,\sum_{i=1}^{d-1}n_{i}\right).
\end{eqnarray*}
Observe that
\begin{equation}\label{coincidence}
\mu_{i}=\sum_{j=1}^{i-1}\val(\alpha_{j,i}(\gamma)),\quad \lambda_{i}=\sum_{\substack{j=1\\j\neq i}}^{d}\val(\alpha_{j,i}(\gamma)).\end{equation}

\begin{prop}
In the above setting, the fundamental domain $F_{\gamma}$ is the intersection
$$
F_{\gamma}=\xx_{\gamma}\cap\big[\sch(\mu)\cap \ep^{\lambda}\cdot\sch(-\mu)\big].
$$
\end{prop}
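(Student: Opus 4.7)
The plan is to identify the convex envelope $D_{0} := \mathrm{Conv}\bigl(\{H_{B'}(x_{0}) : B' \in \cp(T)\}\bigr)$ underlying the fundamental domain $F_{\gamma}$ with the intersection $D' := \mathrm{Conv}(W\mu) \cap \bigl(\lambda - \mathrm{Conv}(W\mu)\bigr)$ of two translated permutohedra, and then to convert this polytope equality into the stated set equality via the standard description of affine Schubert varieties as truncated affine Grassmannians.

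I would begin by computing the vertices of $D_{0}$ explicitly using Proposition \ref{classical example} together with the identity (\ref{coincidence}) and the minimal-form hypothesis $n_{1}\leq \cdots \leq n_{d-1}$, which forces $\val(\alpha_{i,j}(\gamma)) = n_{\min(i,j)}$ whenever $i \neq j$. This yields the closed formula
\[
H_{\sigma B^{-}}(x_{0})_{k} \;=\; \sum_{\substack{l \neq k \\ \sigma^{-1}(l) < \sigma^{-1}(k)}} n_{\min(k,l)},
\]
with extremal cases $H_{eB^{-}}(x_{0}) = \mu$ and $H_{w_{0}B^{-}}(x_{0}) = \lambda - \mu$, and the key symmetry
\[
H_{\sigma B^{-}}(x_{0}) + H_{\sigma w_{0} B^{-}}(x_{0}) \;=\; \lambda \qquad \forall\, \sigma \in W,
\]
which follows because for fixed $k$ the two index sets $\{l : \sigma^{-1}(l) < \sigma^{-1}(k)\}$ and $\{l : \sigma^{-1}(l) > \sigma^{-1}(k)\}$ partition $\{l : l \neq k\}$ and $\lambda_{k} = \sum_{l\neq k} n_{\min(k,l)}$ by (\ref{coincidence}). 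A Rado-type dominance check on the sorted entries of each $H_{\sigma B^{-}}(x_{0})$ shows it lies in $\mathrm{Conv}(W\mu)$; combined with the symmetry this gives $D_{0} \subseteq D'$. For the reverse inclusion $D' \subseteq D_{0}$ I would observe that $D'$ is itself $(G,T)$-orthogonal (as an intersection of two such polytopes) and verify that its $B'$-extreme vertex $\arg\max_{v \in D'} \langle v, \varpi_{P} \rangle$ coincides with $H_{B'}(x_{0})$ for each $B' \in \cp(T)$, which forces $D' = D_{0}$.

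Once the polytope identity is in hand, the remainder is formal. The classical description $\sch(\nu) = \{x \in \xx : \ec(x) \subseteq \mathrm{Conv}(W\nu)\}$ of affine Schubert varieties, combined with the $T(F)$-equivariance $H_{B'}(\ep^{\lambda} x) = \lambda + H_{B'}(x)$, gives $\ep^{\lambda}\sch(-\mu) = \xx(\lambda - \mathrm{Conv}(W\mu))$, so $\sch(\mu) \cap \ep^{\lambda}\sch(-\mu) = \xx(D') = \xx(D_{0})$. Intersecting with $\xx_{\gamma}$ yields $\xx_{\gamma}(D_{0})$, which is exactly $F_{\gamma}$ because the bounded polytope $D_{0}$ meets only one $\Lambda$-orbit of irreducible components of $\xx_{\gamma}$, namely the one of $x_{0}$. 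The main obstacle I expect is the reverse inclusion $D' \subseteq D_{0}$: while the forward direction follows directly from the symmetry identity and the dominance check, ruling out extra extreme points of $D'$ requires careful combinatorial control over the edge-intersections of two permutohedra which, when the $n_{i}$ are not all equal, are translated by $\lambda$ but not centrally symmetric about a common center.
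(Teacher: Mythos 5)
Your plan matches the paper's in outline: both reduce the proposition to the polytope identity $\ec(x_0) = \mathrm{Conv}(W\mu) \cap (\lambda - \mathrm{Conv}(W\mu))$ and then translate back to affine Schubert varieties via the Bruhat--Tits description $\sch(\nu) = \xx(\mathrm{Conv}(W\nu))$. Your symmetry identity $H_{\sigma B^-}(x_0) + H_{\sigma w_0 B^-}(x_0) = \lambda$ is a nice observation not made explicit in the paper; it cleanly reduces the forward inclusion to a single dominance check in $\mathrm{Conv}(W\mu)$, where the paper simply asserts the membership $H_{B'}(x_0) \in D_1 \cap D_2$.

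Where the two arguments part ways is the reverse inclusion, and that is exactly where your proposal is thinnest, as you yourself flag. Your plan rests on the parenthetical ``$D'$ is itself $(G,T)$-orthogonal as an intersection of two such polytopes,'' which is not automatic: generalized permutohedra are not closed under intersection in general. It does hold here, because both $\mathrm{Conv}(W\mu)$ and $\lambda - \mathrm{Conv}(W\mu)$ are cut out by half-spaces whose normals are the $\varpi_P$ for $P$ maximal, so their intersection is again a polytope of that type; once you see this, your worry about ``extra extreme points'' evaporates, since such a polytope has at most one vertex per Weyl chamber. The paper sidesteps the vertex comparison entirely by a width argument: for each maximal $P \in \cf(T)$ it compares the distances $d_P(\ec(x_0))$ and $d_P(D_1 \cap D_2)$ between the opposite faces $D^P$ and $D^{P^-}$. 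Given the containment $\ec(x_0) \subset D_1\cap D_2$ and the fact that both sides are $\{\varpi_P\}$-cut polytopes, equality of widths in every direction forces equality of the support functions in all directions $\pm\varpi_P$ and hence equality of the polytopes. Concretely, a minimal gallery plus Proposition \ref{gkmreg} gives $d_P(\ec(x_0)) = \sum_{\alpha \in \Phi(N,T)} \val(\alpha(\gamma))$, and a direct computation shows $d_P(D_1\cap D_2) = \min\{d_P(D_1), a_P\} = a_P$ equals the same sum. That explicit width computation is the step your proposal points to but does not carry out; substituting it for your vertex-by-vertex check would close the gap, and is essentially what the paper does.
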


\begin{proof}

Let $D_{1}, \,D_{2}$ be the convex polytope with vertices at $(\sigma (\mu))_{\sigma\in \mathfrak{S}_{d}}$ and $(\lambda-\sigma (\mu))_{\sigma\in \mathfrak{S}_{d}}$ respectively. According to the example \ref{tschubert}, we have
$$
\xx^{\vert\mu\vert}(D_{1})=\sch(\mu),\quad \xx^{\vert\mu\vert}(D_{2})=\ep^{\lambda}\cdot\sch(-\mu),
$$
where $\xx^{\vert\mu\vert}$ is the connected component of $\xx$ containing $\ep^{\mu}$. So we only need to prove that $\ec(x_{0})=D_{1}\cap D_{2}$.

Firstly, we check that $\ec(x_{0})\subset D_{1}\cap D_{2}$. For the inclusion in $D_{1}$, observe that both $(f_{B}(x_{0}))_{B\in \cp(T)}$ and $(\sigma(\mu))_{\sigma\in \mathfrak{S}_{d}}$ are positive $(G,T)$-orthogonal families, so it suffices to show that 
$$
f_{\sigma B_{0}^{-}}(x_{0})\prec_{\sigma B_{0}^{-}} \sigma(\mu),
$$
where for $\nu,\nu'\in X_{*}(T)$, $\nu\prec_{\sigma B_{0}^{-}} \nu'$ means that $\nu'-\nu$ is a positive linear combination of the positive coroots with respect to $\sigma B_{0}^{-}$. By proposition \ref{classical example}, this is equivalent to
\begin{eqnarray*}
\sigma\Big(0, \val(\alpha_{\sigma^{-1}(2),\sigma^{-1}(1)}(\gamma)),\cdots, \sum_{j=1}^{d-1}\val(\alpha_{\sigma^{-1}(d),\sigma^{-1}(j)}(\gamma)) \Big)\\
\prec_{\sigma B_{0}^{-}} \sigma\Big(0,\,n_{1},\,n_{1}+n_{2},\,\cdots,\,\sum_{i=1}^{d-1}n_{i}\Big).
\end{eqnarray*}
Permute the inequality by $\sigma^{-1}$, we see that this is a simple consequence of the assumption $n_{1}\leq n_{2}\leq \cdots \leq n_{d-1}$.

The inclusion $\ec(x_{0})\subset D_{1}$ can be used to prove $\ec(x_{0})\subset  D_{2}$ by duality. Define an algebraic involution $\iota$ on $\xx$ by sending $gK$ to $(g^{-1})^{t}K$ for any $g\in G(F)$, where the superscript $^{t}$ means transposition. Since $\gamma$ is diagonal, $\iota$ induces an involution of $\xx_{\gamma}$. In particular, it sends $x_{0}$ to another regular point $x_{0}'$. Looking at how $\iota$ acts on $\xx^{T}$, it is easy to see that $\ec(x_{0}')=-\ec(x_{0})$, i.e. 
$$
f_{\sigma B_{0}}(x_{0}')=-f_{\sigma B_{0}^{-}}(x_{0}), \quad \forall \,\sigma\in \mathfrak{S}_{d},
$$ 
On the other hand, we have $\ec(x_{0})=\ec(x_{0}')+\lambda'$ for some element $\lambda'\in X_{*}(T)$ since both $x_{0}$ and $x_{0}'$ are regular points. It is easy to find that 
\begin{eqnarray*}
\lambda'=f_{B_{0}}(x_{0})-f_{B_{0}}(x_{0}')=f_{B_{0}}(x_{0})+f_{B_{0}^{-}}(x_{0})=\lambda
\end{eqnarray*}
So we get $\ec(x_{0})=\lambda-\ec(x_{0})$. Combined with the inclusion $\ec(x_{0})\subset D_{1}$, we get the other inclusion $\ec(x_{0})\subset D_{2}$.

Given a $(G,T)$-orthogonal family $D$, given $P=MN\in \cf(T)$ maximal, let $d_{P}(D)$ be the distance between the two opposite faces $D^{P}$ and $D^{P^{-}}$. To finish the proof of the proposition, it suffices then to prove that 
$$
d_{P}(\ec(x_{0}))=d_{P}(D_{1}\cap D_{2}).
$$

Choose a minimal gallery of Borel subgroups $B_{1},\cdots,B_{l+1}$ such that $B_{1}\in P,\,B_{l+1}\in P^{-}$, then $\alpha_{B_{i},B_{i+1}},\,i=1,\cdots,l$ runs through $\Phi(N,T)$ exactly once. So we have
\begin{eqnarray*}
d_{P}(\ec(x_{0}))&=& \varpi_{P}(H_{B_{1}}(x_{0})-H_{B_{l+1}}(x_{0}))
=\sum_{i=1}^{l}\varpi_{P}(H_{B_{i}}(x_{0})-H_{B_{i+1}}(x_{0}))\\
&=& \sum_{\alpha\in \Phi(N,T)}\varpi_{P}(\val(\alpha(\gamma))\cdot\alpha^{\vee})=\sum_{\alpha\in \Phi(N,T)}\val(\alpha(\gamma)).
\end{eqnarray*}

Let $P_{i}=M_{i}N_{i}\in \cf(T)$ be the maximal parabolic subgroup associated to the simple root $\alpha_{i}$. Since $d_{P}(D_{1})=d_{P}(D_{2})$ and both $\lambda$ and $\mu$ are anti-dominant, we have
\begin{eqnarray*}
d_{P}(D_{1}\cap D_{2})&=&2\varpi_{P_{i}^{-}}(\mu)-\varpi_{P_{i}^{-}}(\lambda)
=-2\varpi_{i}(\mu)+\varpi_{i}(\lambda)\\
&=& 2\left(\mu_{i+1}+\cdots+\mu_{d}-\frac{d-i}{d}(\mu_{1}+\cdots+\mu_{d})\right)\\
&&-\left(\lambda_{i+1}+\cdots+\lambda_{d}-\frac{d-i}{d}(\lambda_{1}+\cdots+\lambda_{d})\right)\\
&=& \sum_{\alpha\in \Phi(N_{i},T)}\val(\alpha(\gamma)),
\end{eqnarray*}
here we use equation (\ref{coincidence}) in the last equality. Conjugate the above calculation by $\sigma\in \mathfrak{S}_{d}$, we found that 
$$
d_{P}(D_{1}\cap D_{2})=\sum_{\alpha\in \Phi(N,T)}\val(\alpha(\gamma))=d_{P}(\ec(x_{0})).
$$

\end{proof}

\section{Arthur-Kottwitz reduction}

Given a regular semisimple integral element $\gamma\in \kg(F)$, we have the tori $S,T\subset G_{F}$ and the Levi subgroup $M_{0}\in \cl(A)$ as in \S 2.2.
Fix a regular point $x_{0}\in \xx_{\gamma}^{\reg}$. Fix $P_{0}=M_{0}N_{P_{0}}\in \cp(M_{0})$ containing $B_{0}$. Let $\xi\in \ka_{M_0}^{G}$ be such that $\alpha(\xi)$ is positive but almost equal to $0$ for any $\alpha\in \Delta\cap \Phi(N_{P_{0}}, A)$.  Let $D_{0}=(\lambda_{P})_{P\in \cp(M_0)}$ be the $(G,M_0)$-orthogonal family given by
$$
\lambda_{P}=H_{P}(x_{0})+w\cdot \xi,
$$
where $w\in W$ is any element satisfying $P=w\cdot P_{0}$. For $Q=MN\in \cf(M_0)$, define $R_{Q}$ to be the subset of $ \ka_{M_0}^{G}$ satisfying conditions
\begin{eqnarray*}
\pi^{M}(a)&\subset& D_{0}^{Q};\\
\alpha(\pi_{M}(a))&\geq& \alpha(\pi_{M}(\lambda_{P})),\,\forall\,\alpha\in \Phi(N,A),\,\forall\, P\in \cp(M_0),\, P\subset Q.
\end{eqnarray*}
Notice that $R_{G}=D_{0}$. We get a partition 
\begin{equation}\label{part}
\ka_{M_0}^{G}=\bigcup_{Q\in \cf(M_0)} R_{Q}.
\end{equation} 
The figure \ref{partitiongl3} gives an illustration of the partition  for the group $\gl_{3}$ and $M_{0}=T=A$. The partition (\ref{part}) induces a disjoint partition of $\Lambda_{M_{0}}$ via the map $\Lambda_{M_0}\to \ka_{M_0}^{G}$, since we have perturbed the $(G,M_0)$-family $(H_{P}(x_{0}))_{P\in \cp(M_0)}$ with $\xi$. We also want to point out that the partition (\ref{part}) has good  transitivity property: For a maximal parabolic $Q=LU\in \cf(M_0)$, the partition $\overline{R}_{Q}:=\bigcup_{Q'\subset Q}R_{Q'}$ gives similar partition of $\ka_{M_{0}}^{L}$ as (\ref{part}) under the natural projection $\ka_{M_{0}}^{G}\to \ka_{M_{0}}^{L}$.

\begin{figure}[h]
\begin{center}
\begin{tikzpicture}[scale=0.6]

\draw (-2.5,3.17)--(0.5,3.17);
\draw (-4,0.57)--(-1.5,-3.75);
\draw (1.5,-3.75)--(3,-1.15);
\draw (-4,0.57)--(-2.5,3.17);
\draw (-1.5,-3.75)--(1.5,-3.75);
\draw (0.5,3.17)--(3,-1.15);

\draw [red] (-2.5,3.17)--(-2.5, 5.17);
\draw [red] (0.5,3.17)--(0.5, 5.17);
\draw [red] (-1.5,-3.75)--(-1.5, -5.75);
\draw [red] (1.5,-3.75)--(1.5, -5.75);
\draw [red] (-4.23,4.17)--(-2.5,3.17);
\draw [red] (2.23,4.17)--(0.5,3.17);
\draw [red] (-4,0.57)--(-5.75, 1.57);
\draw [red] (-4,0.57)--(-5.75, -0.43);
\draw [red] (-1.5,-3.75)--(-3.23,-4.75);
\draw [red] (1.5,-3.75)--(3.23,-4.75);
\draw [red] (4.73,0.15)--(3,-1.15);
\draw [red] (4.73,-2.15)--(3,-1.15);

\node [blue] at (-2.5, -5.5) {$R_{B^{-}}$};
\node [blue] at (-4.5, -2.6) {$R_{P^{-}}$};
\node [blue] at (3,2) {$R_{P}$};
\node [blue] at (1.4,4.6) {$R_{B}$};
\node [blue] at (0,0) {$D_{0}$};

\node at (-4.2, 3.8) {$\bullet$};
\node [black] at (-5.5,3.8) {$H_{P}(x)$};
\node [black] at (-1,5.2) {$\overline{R}_{Q_{0}}$};
\node [black] at (-6,3) {$\overline{R}_{Q_{1}}$};

\draw (-4.2,3.8)--(-0.5,3.8);
\draw [blue] (-1,3.8)--(-1,3.17);
\draw [blue] (-4.2,3.8)--(-2.66,2.9);

\end{tikzpicture}
\caption{Partition of $\ka_{A}^{G}$ for $\gl_{3}$.}
\label{partitiongl3}
\end{center}
\end{figure}

\begin{lem}
For any $x\in \xx_{\gamma}$, there exists a unique $Q\in \cf(M_0)$ such that $\ec_{M_{0}}^{Q}(x)\subset R_{Q}$. 
\end{lem}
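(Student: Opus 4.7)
The plan is to prove existence and uniqueness separately, exploiting the disjointness of the partition (\ref{part}) together with the combinatorial constraints on $\ec(x)$ coming from Proposition \ref{gkmreg}.

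First, I would unpack $R_P$ for $P=MN$. Since $\lambda_{B'}-\lambda_{B''}$ is a sum of coroots of $M$ whenever $B', B''\subset P$, all projections $\pi_M(\lambda_{B'})$, $B'\subset P$, coincide at a single point $c_P\in \ka_M$, and the face $D_0^P$ lies in the fiber $\pi_M^{-1}(c_P)$. One then has
$$
R_P=\{a\in \ka_T^G \,:\, \pi^M(a)\in D_0^P \text{ and } \pi_M(a)-c_P\in C_P\},
$$
where $C_P=\{v\in \ka_M \,:\, \alpha(v)\geq 0,\,\forall\, \alpha\in \Phi(N,T)\}$ is the closed Weyl chamber attached to $P$. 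Consequently, the condition $\ec^P(x)\subset R_P$ decouples into: (i) $H_P(x)-c_P\in C_P$, and (ii) $\pi^M(\ec^P(x))\subset D_0^P$ as $(M,T)$-orthogonal polytopes, i.e., $f_P(x)\in \xx^M_\gamma(D_0^P)$.

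For existence, I would argue by descending induction on the semisimple rank of the Levi. If $\ec(x)\subset D_0$, take $P=G$. Otherwise some vertex $H_{B'}(x)$ lies outside $D_0$; by disjointness of (\ref{part}) and the perturbation $\xi$, there is a unique maximal proper parabolic $Q_1=M_1N_1\in \cf(T)$ whose open chamber $c_{Q_1}+C_{Q_1}^\circ$ contains $H_{Q_1}(x)$. I then pass to the retraction $f_{Q_1}(x)\in \xx^{M_1}_\gamma$, with $D_0^{Q_1}$ serving as the new $(M_1,T)$-orthogonal truncation polytope and an appropriate restriction of $\xi$ providing the perturbation, and apply the inductive hypothesis to obtain $P'\in \cf^{M_1}(T)$ with $\ec^{P'}(f_{Q_1}(x))\subset R^{M_1}_{P'}$. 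Setting $P=P'N_1\in \cf(T)$ (so that $P\cap M_1=P'$), the transition formula $f_P=f^{M_1}_{P'}\circ f_{Q_1}$ combined with Arthur's proposition then yields $\ec^P(x)\subset R_P$.

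For uniqueness, suppose $P\neq P'$ both satisfy the condition. If $P$ and $P'$ share a common Borel $B'$, then $H_{B'}(x)\in R_P\cap R_{P'}=\emptyset$ by disjointness -- a contradiction. Otherwise, pick Borels $B_1\subset P$ and $B_2\subset P'$. The hypotheses $H_{B_1}(x)\in R_P$ and $H_{B_2}(x)\in R_{P'}$, unpacked through the cone conditions above, force the difference $H_{B_1}(x)-H_{B_2}(x)$ to be strictly larger (in a suitable dominance order dual to $C_P$ and $C_{P'}$) than $\lambda_{B_1}-\lambda_{B_2}$. But Proposition \ref{gkmreg}, applied edge by edge along a minimal gallery from $B_1$ to $B_2$, gives $H_{B_1}(x)-H_{B_2}(x)\leq \lambda_{B_1}-\lambda_{B_2}$ up to the small $\xi$-correction -- a contradiction.

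The main technical obstacle I anticipate is the inductive step in the existence argument: one must check that $\pi^{M_1}(\ec^P(x))$ actually agrees with the $(M_1,T)$-orthogonal polytope $\ec^{P\cap M_1}(f_{Q_1}(x))$ attached to the retraction, so that the inductive hypothesis is directly applicable, and that the perturbation inherited from $\xi$ is compatible with the Arthur-Kottwitz structure on $\xx^{M_1}$ relative to $D_0^{Q_1}$. A secondary subtlety is ensuring, in the opposition case of uniqueness, that the small perturbation $\xi$ is chosen so that no borderline coincidence can obscure the strict inequality.
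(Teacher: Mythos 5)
Your proof is in the same inductive spirit as the paper's (descend to a smaller Levi via a retraction and recurse), but the two arguments differ in the selection criterion and in how uniqueness is handled, and your version leaves a nontrivial gap that the paper's trick is specifically designed to avoid.

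For existence, the paper does not pick ``a $Q_1$ whose open chamber contains $H_{Q_1}(x)$''; it picks the maximal parabolic $P_{0}$ that \emph{maximizes} the scalar $\varpi_{P}\bigl(H_{B'}(x)-H_{B'}(x_{0})\bigr)$ over all maximal $P$. This maximality is the crucial point: it forces, by an elementary right-triangle observation, the whole face $\ec^{P_{0}}(x)$ to lie in the closed cone $\overline{R}_{P_{0}}=\bigcup_{P'\subset P_{0}}R_{P'}$, and only \emph{then} does one retract to $\xx^{M_{P_{0}}}_{\gamma}$ and recurse. Your version, picking an arbitrary $Q_{1}$ whose chamber $c_{Q_{1}}+C_{Q_{1}}^{\circ}$ contains $H_{Q_{1}}(x)$, does not control the $\pi_{M_{1}}$-component of the smaller faces $\ec^{P}(x)$ for $P\subset Q_{1}$ after retraction — which is precisely the ``main technical obstacle'' you flag at the end. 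The paper's choice sidesteps that obstacle, because after the maximization step the required cone inequalities for $\Phi(N_{1},T)$ are already granted. Also, your claim that $Q_{1}$ is \emph{unique} is in general false (a polytope can stick out of $D_{0}$ past several walls at once); this does not break the argument, since any such $Q_{1}$ would do, but it is a misstatement.

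For uniqueness, the paper's argument is shorter and less case-driven: by Proposition~\ref{gkmreg}, $\ec(x)$ is contained in a $\Lambda$-translate of $\ec(x_{0})$, for which the uniqueness of $P$ is immediate from the geometry of the partition, and the general case reduces to this one by containment. Your two-case analysis (shared Borel vs. not) is reasonable, and the shared-Borel case does work once one restricts to lattice points where the $\xi$-perturbation makes the partition genuinely disjoint. But the second case — ``the cone conditions force $H_{B_{1}}(x)-H_{B_{2}}(x)$ to be strictly larger than $\lambda_{B_{1}}-\lambda_{B_{2}}$ in a suitable dominance order'' — is not actually spelled out; when $P$ and $P'$ are not opposite (and they need not be), it is not clear which functional you pair against to get the contradiction, and the cone conditions only give you inequalities in certain directions. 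The paper's route through Proposition~\ref{gkmreg} avoids having to set up such a dominance comparison.

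So: same strategy of retracting to a Levi, but a materially different (and more robust) choice of the first parabolic, and a different uniqueness argument. The gap you yourself identify — compatibility of $\pi^{M_{1}}\ec^{P}(x)$, $D_{0}^{Q_{1}}$, and the perturbation with the recursion — is real in your formulation, and the maximizing choice of $P_{0}$ is exactly what the paper uses to avoid having to address it.
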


\begin{proof}

The uniqueness is clear for the regular points $x\in \xx_{\gamma}^{\reg}$, since $\ec_{M_{0}}(x)$ is a translation of $\ec_{M_{0}}(x_{0})$ by $\Lambda_{M_{0}}$. By proposition \ref{gkmreg}, for any $x\in \xx_{\gamma}$, the convex polytope $\ec_{M_{0}}(x)$ is contained in a translation of $\ec(x_{0})$ by some $\lambda\in \Lambda$, from which the uniqueness for general case follows. 

Now we prove the existence. We can suppose that $x\notin \xx_{\gamma}(D_{0})$. 
For maximal parabolic $Q\in \cf(M_0)$, notice that $\varpi_{Q}(H_{B}(x))$ doesn't depend on the choice of $B\in \cp(A),\,B\subset Q$, we write it as $N_{Q}(x)$. Let $Q_{0}$ be the maximal parabolic such that 
$$
N_{Q_{0}}(x)-N_{Q_{0}}(x_{0})=\max_{Q\in \cf(M_{0})\text{ maximal}}\{N_{Q}(x)-N_{Q_{0}}(x_{0})\},
$$
then $N_{Q_{0}}(x)-N_{Q_{0}}(x_{0})>0$ since $x\notin \xx_{\gamma}(D_{0})$.

We claim that $\ec_{M_{0}}^{Q_{0}}(x)\subset \overline{R}_{Q_{0}}$. If this is not the case, there exists $P\in \cp(M_{0}),\,P\subset Q_{0}$, such that $H_{P}(x)\notin \overline{R}_{Q_{0}}$. Since $N_{Q_{0}}(x)-N_{Q_{0}}(x_{0})>0$, there exists a maximal parabolic subgroup $Q_{1}\in \cf(M_{0})$ which is adjacent to $Q_{0}$, such that $H_{P}(x)\in \overline{R}_{Q_{1}}$. The situation is best illustrated by the upper left corner of figure \ref{partitiongl3}. Since in a right triangle with sides $a,b,c$, we always have $c>a,b$, we get
$$
N_{Q_{1}}(x)-N_{Q_{1}}(x_{0})> N_{Q_{0}}(x)-N_{Q_{0}}(x_{0}),
$$
which is a contradiction to the assumption on $Q_{0}$.

Now we can use the retraction $f_{Q_{0}}:\xx_{\gamma}\to \xx_{\gamma}^{M_{Q_{0}}}$ to find the required parabolic subgroup $Q$ inductively.

\end{proof}

With this lemma, we define 
$
S_{Q}:=\{x\in \xx_{\gamma}\mid \ec_{M_{0}}^{Q}(x)\subset R_{Q}\}.
$
Notice that $F_{\gamma}$ is one connected component of $S_{G}=\xx_{\gamma}(D_{0})$. As in example \ref{diffcomp}, the other connected components of $S_{G}$ may have slight difference from $F_{\gamma}$. We get a disjoint partition 
$$
\xx_{\gamma}=\xx_{\gamma}(D_{0})\cup\bigcup_{\substack{Q\in \cf(M_0)\\Q\neq G}}S_{Q}.
$$  

For each parabolic subgroup $Q=MN\in \cf(M_{0})$, consider the restriction of the retraction $f_{Q}: \xx\to \xx^{M}$ to $S_{Q}$, its image is $S_{Q}\cap \xx^{M}$. Recall that the connected components of $\xx^{M}$ are fibers of the map $\nu_{M}:\xx^{M}\to \Lambda_{M}$. For $\nu\in \Lambda_{M}$, let $\xx^{M,\nu}$ be its fiber at $\nu$. Let $S_{Q}^{\nu}=f_{Q}^{-1}(S_{Q}\cap \xx^{M,\nu})$, it is easy to verify that
$$
S_{Q}\cap \xx^{M,\nu}=\xx_{\gamma}^{M,\nu}(D_{0}^{Q}).
$$

\begin{prop}\label{klretraction}
The retraction $f_{Q}: S_{Q}^{\nu}\to \xx_{\gamma}^{M,\nu}(D_{0}^{Q})$ is an iterated fibration in affine spaces.
\end{prop}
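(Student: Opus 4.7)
The plan is to describe the fiber of $f_P$ over a point $y\in\xx_\gamma^{M,\nu}(D_0^P)$ explicitly in root coordinates on $N$, and to show that both the affine Springer condition and the truncation condition $\ec^P(x)\subset R_P$ cut out, inductively on coordinates, affine spaces whose dimension depends only on $y$. Fix a lift $y=mM(\co)$ and enumerate $\Phi(N,T)$ as $\alpha_1,\dots,\alpha_r$ refining the partial order by height (roots of minimal height first). Choosing additive parametrizations $u_{\alpha}:F\xrightarrow{\sim}N_{\alpha}(F)$, every $n\in N(F)$ admits a unique expression $n=\prod_{i=1}^{r}u_{\alpha_i}(n_{\alpha_i})$. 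By the Iwasawa decomposition, $f_P^{-1}(y)\subset\xx$ is identified with the space of such products modulo $N(F)\cap mKm^{-1}$, the latter subgroup having an explicit description in our coordinates that depends only on $m$.

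The first half of the analysis is to translate $x=nmK\in\xx_\gamma$. Since $y\in\xx_\gamma^M$ already gives $\Ad(m^{-1})\gamma\in\km(\co)$, the condition $\Ad((nm)^{-1})\gamma\in\kg(\co)$ reduces to requiring $\Ad(m^{-1})\bigl(\Ad(n^{-1})\gamma-\gamma\bigr)\in\kn(\co)$. Because $\gamma\in\kt$ acts on $\kn_\alpha$ by the scalar $\alpha(\gamma)$, the weight-$\alpha_i$ component of $\Ad(n^{-1})\gamma-\gamma$ is of the form $-\alpha_i(\gamma)\,n_{\alpha_i}X_{\alpha_i}+Q_i(n_{\alpha_1},\dots,n_{\alpha_{i-1}})$, where $Q_i$ is a polynomial involving only coordinates of strictly lower height. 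Once the previous coordinates have been fixed, the constraint on $n_{\alpha_i}$ becomes a single inequality $\val(n_{\alpha_i}+q_i)\geq e_i$, with $q_i$ a polynomial in those coordinates and $e_i$ an explicit integer depending on $m$, $\alpha_i$ and $\val(\alpha_i(\gamma))$.

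Next, I would translate $\ec^P(x)\subset R_P$. The face $\ec^P(x)$ is the convex envelope of the $H_{B'}(x)$ for $B'\subset P$, and, since $f_P(x)=y$ is fixed, each such vertex differs from the corresponding vertex of $\ec^P(y)$ only by the contribution of the $N$-part in the Iwasawa decomposition along $B'$. A direct generalization of Proposition \ref{classical example} expresses these contributions as minima of valuations $\val(n_{\alpha_i}+q'_i)$ over suitable subsets of $\Phi(N,T)$; the condition that the resulting polytope lie in $R_P$ then yields, for each $\alpha_i$, an upper bound involving $\val(n_{\alpha_i}+q'_i)$ with $q'_i$ again polynomial in lower-height coordinates. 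Combining the two, $n_{\alpha_i}$ ranges, once the preceding coordinates are fixed, over an affine space isomorphic to a truncation $\ep^{e_i}\co/\ep^{f_i+1}\co$ translated by a polynomial in those coordinates, and processing the $\alpha_i$ in our chosen order assembles $f_P:S_P^\nu\to\xx_\gamma^{M,\nu}(D_0^P)$ as an iterated Zariski-locally trivial affine space fibration. The main obstacle will be to verify that the two sets of bounds are compatible (i.e.\ $e_i\leq f_i+1$) and that they are expressible through the same corrected coordinate $n_{\alpha_i}+q_i=n_{\alpha_i}+q'_i$; this compatibility is precisely what the perturbation $\xi$ entering the definitions of $D_0$ and of the regions $R_P$ is engineered to guarantee.
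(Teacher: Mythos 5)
Your first two paragraphs are on the right track and are a coordinate version of the argument, but your third paragraph rests on a misconception that would derail the proof. You claim that for $B'\subset P$ the vertex $H_{B'}(x)$ ``differs from the corresponding vertex of $\ec^P(y)$ only by the contribution of the $N$-part,'' and then extract upper bounds on the coordinates $n_{\alpha_i}$ from the truncation condition $\ec^P(x)\subset R_P$. In fact, for every $B'\subset P$ one has $N_P\subset N_{B'}$, so in the Iwasawa decomposition of $x=nmK$ along $B'$ the factor $n\in N_P(F)$ is simply absorbed into the unipotent part and contributes nothing to the $T$-component. Hence $H_{B'}(x)=H^M_{B'\cap M}(f_P(x))$ exactly, $\ec^P(x)$ depends only on $y=f_P(x)$, and the condition $\ec^P(x)\subset R_P$ is automatic once $y\in\xx_\gamma^{M,\nu}(D_0^P)$ and $\nu$ lies in the sector assigned to $P$. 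The paper records this as the identity $S_P^\nu=[\,N(F)\cdot\xx_\gamma^{M,\nu}(D_0^P)\,]\cap\xx_\gamma$ before doing any fiber analysis.

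Consequently, the truncation supplies no upper bounds on $n_{\alpha_i}$; the finite-dimensionality of the fiber comes entirely from dividing by $N(F)\cap mKm^{-1}$, which you mention in paragraph one but do not actually use to produce the bounds. As written, paragraphs two and three give only lower bounds $\val(n_{\alpha_i}+q_i)\geq e_i$ plus phantom upper bounds that are not there, so the purported affine-space fibers would be infinite-dimensional and the ``compatibility guaranteed by $\xi$'' you invoke at the end is not the mechanism at play ($\xi$ only ensures the $T$-fixed points avoid the walls of the partition). The paper's proof sidesteps all of this by working with the lower central series $N_i$ of $N$: the tower $S_{\rk(G)+1}\to\cdots\to S_0\cong\xx_\gamma^{M,\nu}(D_0^P)$, where each $S_{i+1}\to S_i$ is a torsor under the kernel of $\ad(\gamma)$ acting on the quotient bundle $\bigl(\wn_i/\wn_i\cap\kkk\bigr)\big/\bigl(\wn_{i+1}/\wn_{i+1}\cap\kkk\bigr)$, surjectivity of $\ad(\gamma)$ on $\kn_i(F)$ (regularity of $\gamma$) giving the surjectivity of each step and the modding out by $N(F)\cap mKm^{-1}$ being what makes each kernel a genuine finite-rank vector bundle. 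You should replace your third paragraph with this step: note that the truncation is invisible on fibers, then carry out the finite-dimensionality argument using the $mKm^{-1}$-quotient rather than the truncation.
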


\begin{proof}

Since $f_{Q}(ux)=f_{Q}(x),\,\forall\,u\in N,\,x\in \xx$, by the definition of $S_{Q}^{\nu}$, we have the identity
$$
S_{Q}^{\nu}=[N(F)\cdot \xx_{\gamma}^{M,\nu}(D_{0}^{Q})]\cap \xx_{\gamma}.
$$
So the fiber of $f_{Q}:S_{Q}^{\nu}\to \xx_{\gamma}^{M,\nu}(D_{0}^{Q})$ at $mM(\co)$ is 
$$
\Big\{umM(\co)\mid u\in N(F),\,\Ad(u^{-1})\gamma\in \Ad(m)\kg(\co)\Big\}.
$$ 
We'll prove that they form a family which is an iterated fibration in affine spaces.

We follow the proof of Kazhdan-Lusztig \cite{kl}, \S5. By assumption, $\mathrm{char}(k)>\rk(G)$, the exponential map $\exp:\kn\to N$ is well defined. The group $N$ has the decreasing filtration by normal subgroups
$$
N_{0}=N\supset N_{1}=[N,N]\supset \cdots\supset N_{i}=[N_{i-1},N]\supset\cdots \supset N_{\rk(G)}\supset 1.
$$ 
The exponential map induces an isomorphism $\kn_{i}/\kn_{i+1}\to N_{i}/N_{i+1}$ which sends $n_{i}$ to $1+n_{i}$.

Let $\ck$ be the $K$-equivariant fiber bundle $G(F)\times_{K} K$ on $\xx$, let $\kkk$ be the $K$-equivariant vector bundle $G(F)\times_{K}\kg(\co)$ on $\xx$, where $K$ acts on $K$ and $\kg(\co)$ by conjugaison. Let $\wnn_{i}$ be the constant fiber bundle $\xx\times N_{i}(F)$, let $\wn_{i}$ be the constant vector bundle $\xx\times \kn_{i}(F)$. We denote also $\wnn=\wnn_{0}$.

To begin with, observe that with the retraction $f_{Q}$, the locally closed subvariety $f_{Q}^{-1}(\xx^{M,\nu})$ of $\xx$ is isomorphic to the restriction of the fiber bundle $\wnn/\wnn\cap \ck$ over $ \xx^{M,\nu}$, we will identify them in the following. For $i=0,\cdots,\rk(G)+1$, let $S_{i}$ be the sub bundle of $\wnn_{i}\backslash \wnn/\wnn\cap \ck$ restricted to $\xx_{\gamma}^{M,\nu}(D_{0}^{Q})$, whose fiber at $mM(\co)$ is given by
$$
\Big\{u\in N_{i}(F)\backslash N(F)/N(F)\cap \Ad(m)K\mid \Ad(u)^{-1}\gamma\in \Ad(m)\kg(\co)+\kn_{i}(F) \Big\}.
$$
Let $p_{i}:S_{i+1}\to S_{i}$ be the natural projection, we get a tower of projections 
$$
S_{Q}^{\nu}\cong S_{\rk(G)+1}\to S_{\rk(G)}\to\cdots \to S_{0}\cong \xx_{\gamma}^{M,\nu}(D_{0}^{Q}).
$$
The last isomorphism is due to the equivalence of the equations $\gamma \in \Ad(m)\kg(\co)+\kn(F)$ and $\gamma \in \Ad(m)\kg(\co)$ since $\Ad(m)^{-1}\gamma\in \km(F)$. We will prove that each $S_{i+1}$ is a homogeneous space under a vector bundle over $S_{i}$, this will end the proof of the proposition.

Given $gK\in S_{i}$, we have
$$
\gamma\in \Ad(g)\kg(\co)+\kn_{i}(F).
$$
Let $u=1+n\in N_{i+1}(F)\backslash N_{i}(F)$, with $n\in \kn_{i+1}(F)\backslash \kn_{i}(F)$, then
\begin{eqnarray}
ugK\in S_{i+1}&\iff& \Ad(u^{-1})\gamma \in \Ad(g)\kg(\co)+\kn_{i+1}(F)\nonumber \\
\label{klequ}&\iff& \gamma+[\gamma,n]\in \Ad(g)\kg(\co)+\kn_{i+1}(F).
\end{eqnarray}
Using the isomorphism
$$
\frac{\Ad(g)\kg(\co)+\kn_{i}(F)}{\Ad(g)\kg(\co)+\kn_{i+1}(F)}\cong \frac{\kn_{i}(F)/\kn_{i}(F)\cap \Ad(g)\kg(\co)}{\kn_{i+1}(F)/\kn_{i+1}(F)\cap \Ad(g)\kg(\co)},
$$
let $\bar{\gamma}$ be the image of $\gamma$ under the isomorphism, then the equation (\ref{klequ}) means that $n$ should satisfy the equation $\ad(\gamma)n=-\bar{\gamma}$ in the above quotient. Consider the endomorphism $\ad(\gamma)$ of the restriction of the  vector bundle
\begin{equation}\label{torseur}
\frac{\wn_{i}/\wn_{i}\cap \kkk}{\wn_{i+1}/\wn_{i+1}\cap \kkk}
\end{equation}
on $S_{i}$. It is surjective since $\ad(\gamma):\kn_{i}(F)\to \kn_{i}(F)$ is. This means that there is always $n$ such that equation (\ref{klequ}) is satisfied, i.e. $p_{i}$ are surjective for all $i$. Further more, let $V_{i}$ be kernel of the endomorphism $\ad(\gamma)$ of the vector bundle (\ref{torseur}), then $V_{i}$ is a vector bundle on $S_{i}$. The above calculation shows that $S_{i+1}$ is a homogeneous space over $S_{i}$ under the vector bundle $V_{i}$.

\end{proof}

\begin{prop}\label{locally closed}
The strata $S_{Q}^{\nu}$ are locally closed sub varieties of $\xx_{\gamma}$. Furthermore, in the decomposition
\begin{equation}\label{arthurkottwitz}
\xx_{\gamma}=\xx_{\gamma}(D_{0})\cup \bigcup_{\substack{Q\in \cf(M_0)\\ Q\neq G}}\bigcup_{\nu\in \Lambda_{M_{Q}}\cap R_{Q}} S_{Q}^{\nu},
\end{equation}
we can order the strata $S_{Q}^{\nu}$ as $S^{(1)},S^{(2)}, \cdots,$ such that for each $n\in \bn$, the union $\bigcup_{i=1}^{n}S^{(i)}$ is a closed sub variety of $\xx_{\gamma}$.
\end{prop}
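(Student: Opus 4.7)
The plan is to first check local closedness of each $S_P^{\nu}$, then to exhibit a $\bz_{\geq 0}$-valued invariant $c(P,\nu)$ of the strata whose sublevel sets give projective closed subvarieties of $\xx_\gamma$; ordering strata by increasing $c$ then proves the proposition.

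For local closedness, I would observe that for any $x \in S_P$ every vertex $H_{B'}(x)$ with $B' \subset P$ has $\pi_M$-projection equal to $H_P(x) = \nu_M(f_P(x))$. Hence the defining condition $\ec^P(x) \subset R_P$ splits into (i) the Levi-level condition $f_P(x) \in \xx_\gamma^{M,\nu}(D_0^P)$, and (ii) the inequalities $\alpha(\nu) \geq \alpha(\pi_M(\lambda_{B'}))$ involving only $\nu$, which merely cut out the admissible sublattice $\Lambda_{M_P} \cap R_P$. For admissible $\nu$ this yields the clean description
$$
S_P^{\nu} = f_P^{-1}\bigl(\xx_\gamma^{M,\nu}(D_0^P)\bigr).
$$
Since $\xx^M \hookrightarrow \xx$ is a closed immersion, $\xx^{M,\nu}$ is open-closed in $\xx^M$, and $\xx_\gamma^{M,\nu}(D_0^P)$ is a closed projective subvariety of $\xx^{M,\nu}$; combined with the iterated-affine-fibration structure from Proposition \ref{klretraction}, this exhibits $S_P^{\nu}$ as locally closed in $\xx_\gamma$.

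For the ordering, fix a linear functional $\rho$ in the interior of the dominant chamber of $(\ka_T^G)^*$ and define $c(P,\nu) := \rho(\nu - H_P(x_0))$, properly normalized to take non-negative integer values (equivalently $c(P,\nu) = \max_{B'}\rho(H_{B'}(x) - H_{B'}(x_0))$ for $x$ a regular point of $S_P^{\nu}$, well-defined by Proposition \ref{gkmreg}, since the polytope of a regular point is a fixed integral translate of $\ec(x_0)$ determined by $(P,\nu)$). This invariant vanishes on the fundamental stratum containing $x_0$, is strictly positive on every other stratum thanks to the perturbation $\xi$, and has finite sublevel sets. The main claim is that
$$
Y_N := \xx_\gamma(D_0) \cup \bigcup_{c(P,\nu) \leq N} S_P^{\nu}
$$
is a projective closed subvariety of $\xx_\gamma$ for each $N$. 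Boundedness is immediate: only finitely many strata contribute, and each lies in a fixed Schubert variety via Proposition \ref{klretraction}.

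The hard part will be the closedness of $Y_N$, i.e., $\overline{S_P^{\nu}} \setminus S_P^{\nu} \subset \bigcup_{c(P',\nu') < c(P,\nu)} S_{P'}^{\nu'}$. I would prove it by analyzing a specializing family $x_t \to x_\infty$ with $x_t$ regular in $S_P^{\nu}$. Each Iwasawa retraction $f_{B'}$ is semi-continuous in the sense that the closure in $\xx$ of a semi-infinite $N_{B'}(F)$-orbit contains only orbits of strictly smaller $\nu_T$-image; consequently every vertex $H_{B'}(x_t)$ can only move inward under specialization, generalizing the $T(\co)$-orbit observation used in the proof of Proposition \ref{gkmreg}. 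The stratum-defining face of $\ec(x_\infty)$ thus lies in a region $R_{P'}$ with strictly smaller $c$, as required. The technical crux is carrying out this semi-continuity analysis cleanly in families and turning the inward motion of vertices into a strict decrease of the scalar $c$; this should follow from combining the Iwasawa-cell closure structure in the affine Grassmannian with the combinatorial control on vertices from Proposition \ref{gkmreg}.
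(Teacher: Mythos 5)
Your local-closedness argument is essentially sound in outline: you have the right identification $S_P^{\nu}=f_P^{-1}\bigl(\xx_\gamma^{M,\nu}(D_0^P)\bigr)\cap\xx_\gamma$, and combining the fact that $f_P^{-1}(\xx^{M,\nu})=N(F)\cdot\xx^{M,\nu}$ is a locally closed semi-infinite orbit in $\xx$ with the closedness of $\xx_\gamma^{M,\nu}(D_0^P)$ does give local closedness. (The phrase ``combined with the iterated-affine-fibration structure'' is a red herring — having such a fibration onto something locally closed is not by itself a proof of local closedness; what carries the argument is the semi-infinite orbit closure statement, which you do invoke, so this can be fixed.)

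The ordering argument, however, has a genuine gap exactly where you flag it. You propose ordering the strata by a scalar $c(P,\nu)=\rho(\nu-H_P(x_0))$ and closing the argument with a claim of the shape $\overline{S_P^{\nu}}\setminus S_P^{\nu}\subset\bigcup_{c<c(P,\nu)}S_{P'}^{\nu'}$, derived from semi-continuity of the Iwasawa retractions. That semi-continuity is one-sided in a Borel-dependent way: if $x_t$ specializes to $x_\infty$ then $H_{B'}(x_\infty)$ differs from $H_{B'}(x_t)$ by a nonnegative sum of $B'$-\emph{positive} coroots, but your $\rho$ is chosen dominant with respect to a single fixed $B$, so $\rho$ is \emph{not} nonnegative on $B'$-positive coroots for the other $B'$. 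Consequently, neither ``$H_{B'}(x)$ moves inward implies $\rho(H_{B'}(x))$ decreases'' nor ``the max over $B'$ decreases'' is automatic, and the ``inward motion of vertices'' does not cleanly become a strict decrease of a single scalar. Getting a strict inequality (to guarantee strata with the same $c$-value never sit in each other's closures) is a further obstruction. You have essentially postponed the content of the proposition to an unproven closure-relations statement for a multi-parabolic stratification, which is comparable in difficulty to what one set out to show.

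The paper avoids this entirely by a different mechanism. It groups the strata into aggregates $Z_{P_0}^{\nu_0}=\bigcup_{P'\subset P_0}\bigcup_{p^{M'}_{M_0}(\nu')=\nu_0}S_{P'}^{\nu'}$ for a fixed maximal parabolic $P_0\supset P$, observes that each $Z_{P_0}^{\nu_0}$ is a set-theoretic difference $\xx_\gamma(D)\setminus\xx_\gamma(D')$ of two truncated affine Springer fibers — which are closed for elementary reasons — and then, after retracting $Z_{P_0}^{\nu_0}$ onto $\xx_\gamma^{M_0,\nu_0}(D^{P_0})$ by an iterated affine fibration (the same proof as Proposition~\ref{klretraction}), appeals to induction on the Levi to refine the aggregate. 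This gives closedness of each partial union for free, since closedness of truncated Springer fibers is immediate, and it produces the needed ordering without ever establishing individual closure inclusions between strata. If you want to keep your scalar-invariant approach, the missing step is a precise proof of the closure estimate; a more efficient route is to observe, as the paper does, that the natural ``sublevel sets'' to take are truncated Springer fibers $\xx_\gamma(D)$ for an increasing family of $(G,T)$-orthogonal families $D$, whose closedness requires no closure-relation analysis at all.
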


\begin{proof}

To begin with, $\xx_{\gamma}(D)$ is a closed sub variety of $\xx_{\gamma}$ for any $(G,M_0)$-orthogonal family $D$. Now we prove by induction. Let $Q_{0}=L_{0}U_{0}\in \cf(M_0)$ be a maximal parabolic subgroup containing $Q$. For $Q'=M'N'\in \cf(M_0),\,Q'\subset Q_{0}$, let $p^{M'}_{L_{0}}$ be the natural projection  $\Lambda_{M'}\to \Lambda_{L_{0}}$, let $\nu_{0}=p^{M}_{L_{0}}(\nu)$. Consider 
$$
Z_{Q_{0}}^{\nu_{0}}:=\bigcup_{Q'\subset Q_{0}}\bigcup_{p^{M'}_{L_{0}}(\nu')=\nu_{0}}S_{Q'}^{\nu'}.
$$
Firstly, $Z_{Q_{0}}^{\nu_{0}}$ can be written as a difference $\xx_{\gamma}(D)\backslash \xx_{\gamma}(D')$ for two $(G,M_0)$-orthogonal family $D,\,D'$. Secondly, observe that 
$$
S_{Q'}^{\nu'}=[U_{0}(F)\cdot (S_{Q'}^{\nu'}\cap \xx^{L_{0},\nu_{0}})]\cap \xx_{\gamma},
$$ 
the same proof as that of proposition \ref{klretraction} shows that the retraction 
$$
f_{Q_{0}}:Z_{Q_{0}}^{\nu_{0}}\to \xx_{\gamma}^{L_{0},\nu_{0}}(D^{Q_{0}})
$$
is an iterated fibration in affine spaces. Now the claim follows by induction, using the transitivity property of $f_{P}$.
\end{proof}

By proposition \ref{klretraction}, each strata $S_{Q}^{\nu}$ has an iterated affine fibration onto $\xx_{\gamma}^{M_{Q},\nu}(D_{0}^{Q})$, so the study of $\xx_{\gamma}$ is reduced to that of $F_{\gamma}$. We call the decomposition (\ref{arthurkottwitz}) \emph{the Arthur-Kottwitz reduction}.




\begin{lem}\label{fundamental variant}
Suppose that $F_{\gamma}^{M}$ is cohomologically pure for any proper Levi subgroup $M$ of $G$ containing $M_0$. Suppose that $F_{\gamma}$ is cohomologically pure, then the truncated affine Springer fiber $\xx_{\gamma}^{\nu}(D_{0})$ is cohomologically pure for all $\nu\in \Lambda_{G}$.

 \end{lem}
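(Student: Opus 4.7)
Plan: I would argue by induction on the semisimple rank of $G$. The base case $G=T$ is trivial, since $\xx_{\gamma}$ is then a discrete set of points. For the inductive step, the hypothesis of the lemma is inherited by every Levi subgroup $M\in \cl(T)$, because every Levi of $M$ containing $T$ is also a Levi of $G$ containing $T$, so in particular $F_{\gamma}^{M'}$ is pure for every Levi $M'$ of $M$ containing $T$. Applying the induction hypothesis to each proper Levi $M\subsetneq G$ yields purity of the truncated Springer fibers $\xx_{\gamma}^{M,\nu'}(D_{0}^{P})$ for every $\nu'\in \Lambda_{M}$ and every $P\in \cp(M)$.

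The main step is to decompose $\xx_{\gamma}^{\nu}(D_{0})$ in a way that matches the Arthur--Kottwitz structure internally. I would stratify
$$
\xx_{\gamma}^{\nu}(D_{0})=\bigsqcup_{Q\in \cf(T)}U_{Q}^{\nu}
$$
by a condition analogous to the definition of $S_{P}$ in Proposition \ref{locally closed}, recording the minimal face $D_{0}^{Q}$ of $D_{0}$ containing the polytope $\ec(x)$. For $Q=G$, the stratum $U_{G}^{\nu}$ is the regular locus of $\xx_{\gamma}^{\nu}(D_{0})$; by Proposition \ref{bezru} and the free transitive $\Lambda$-action on irreducible components of $\xx_{\gamma}$, each of its connected components is a $\Lambda$-translate of (an open subvariety of) $F_{\gamma}$, and is therefore pure by hypothesis. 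Note that $U_{G}^{\nu}$ is empty unless $\nu$ lies in the image in $\Lambda_{G}$ of the $\Lambda$-orbit of $x_{0}$. For $Q=MN\neq G$, the retraction $f_{Q}:\xx\to \xx^{M}$ restricts, by a direct adaptation of the Kazhdan--Lusztig argument of Proposition \ref{klretraction}, to an iterated affine fibration $U_{Q}^{\nu}\to \xx_{\gamma}^{M,\nu_{Q}}(D_{0}^{Q})$ for the appropriate $\nu_{Q}\in \Lambda_{M}$ lifting $\nu$; the target is pure by the induction above, hence so is $U_{Q}^{\nu}$.

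The strata $U_{Q}^{\nu}$ can then be ordered as in Proposition \ref{locally closed} so that at each step one adjoins a locally closed subvariety whose boundary lies in the preceding union. Since purity is preserved under such stratifications and under disjoint unions of pure pieces, we conclude that $\xx_{\gamma}^{\nu}(D_{0})$ is pure.

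The principal obstacle is verifying the affine-fibration property for each $U_{Q}^{\nu}$ in the truncated setting: one must check that the successive filtration argument of Proposition \ref{klretraction} respects the condition $\ec(x)\subset D_{0}$ throughout, so that the image of $U_{Q}^{\nu}$ under $f_{Q}$ is exactly the pure variety $\xx_{\gamma}^{M,\nu_{Q}}(D_{0}^{Q})$ rather than a strictly smaller subvariety, and that the closure relations among the $U_{Q}^{\nu}$ inside $\xx_{\gamma}^{\nu}(D_{0})$ (not inside $\xx_{\gamma}$) are compatible with the ordering produced in the proof of Proposition \ref{locally closed}.
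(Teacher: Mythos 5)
The proposal takes a genuinely different route, and that route has a gap. The paper proceeds by noting that, after a translation by $\Lambda$, $\xx_{\gamma}^{\nu}(D_{0})$ sits inside $F_{\gamma}=\xx_{\gamma}^{\nu}(D_{0}+\varpi^{\vee})$ as a \emph{closed} subvariety (the two truncations differ by a minuscule coweight $\varpi^{\vee}$). One then stratifies the \emph{complement} $F_{\gamma}\backslash\xx_{\gamma}^{\nu}(D_{0})$ via Arthur--Kottwitz into pieces $S_{P}^{\nu'}$, checks via $T$-fixed points and Proposition \ref{gkmreg} that each such $S_{P}^{\nu'}$ is entirely contained in $F_{\gamma}$, uses Propositions \ref{klretraction} and \ref{locally closed} together with the inductive hypothesis on Levi subgroups to get purity of the complement, and finally transports purity from $F_{\gamma}$ to the closed subvariety $\xx_{\gamma}^{\nu}(D_{0})$ by a weight argument in the long exact sequence (the connecting map $H^{i-1}(\xx_{\gamma}^{\nu}(D_{0}))\to H^{i}_{c}(F_{\gamma}\backslash\xx_{\gamma}^{\nu}(D_{0}))$ vanishes by \cite{weil2}, and $H^{i}(\xx_{\gamma}^{\nu}(D_{0}))$ is then a quotient of the pure $H^{i}(F_{\gamma})$).

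Your proposal instead stratifies $\xx_{\gamma}^{\nu}(D_{0})$ directly and uses the hypothesis on $F_{\gamma}$ to claim the interior stratum $U_{G}^{\nu}$ is pure. This is the weak point: you say that each connected component of $U_{G}^{\nu}$ is ``a $\Lambda$-translate of (an open subvariety of) $F_{\gamma}$, and is therefore pure by hypothesis.'' Purity is not inherited by open subvarieties. If $U\subset X$ is open with $X$ projective and pure, there is no reason for $H^{i}_{c}(U)$ to be pure of weight $i$: the long exact sequence relates it to $H^{*}(X)$ and $H^{*}(X\backslash U)$, and mixedness can enter from the closed complement. Indeed, the whole point of the lemma is to control exactly such a closed-complement situation, and the direction in which the long exact sequence plays nicely with weights is the one the paper uses (\emph{closed} subvariety inside a pure variety with pure complement), not the one you invoke. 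So the ``therefore pure'' in your treatment of $U_{G}^{\nu}$ is unjustified, and this step is exactly where the hypothesis that $F_{\gamma}$ is pure must do its work; without a replacement for it your induction cannot close.

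A secondary concern: your stratification by ``minimal face $D_{0}^{Q}$ containing $\ec(x)$'' is not the same as the Arthur--Kottwitz stratification by $S_{P}$, and the affine-fibration property of Proposition \ref{klretraction} is established for the latter, not the former. The boundary strata $U_{Q}^{\nu}$ (consisting of $x$ with $\ec(x)$ flat in the $N_{Q}$-directions) are not obviously the total space of an iterated affine fibration onto a truncated Springer fiber in $\xx^{M_{Q}}$; you flag this as ``the principal obstacle'' but the verification is not routine. By contrast, the paper's strata $S_{P}^{\nu'}$ come already equipped with that structure, and the only extra work is the $T$-fixed-point argument (using that the two truncation polytopes differ by a minuscule coweight) showing $S_{P}^{\nu'}\cap F_{\gamma}=S_{P}^{\nu'}$, which your proposal does not engage with. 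I would recommend reorganising the argument around the closed inclusion $\xx_{\gamma}^{\nu}(D_{0})\hookrightarrow F_{\gamma}$ rather than trying to stratify $\xx_{\gamma}^{\nu}(D_{0})$ from within.
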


\begin{proof}

As in the example \ref{diffcomp}, after certain translation on $\xx$ by $\Lambda_{M_{0}}$, we have $F_{\gamma}=\xx_{\gamma}^{\nu}(D_{0}+\varpi^{\vee})$, where $\varpi^{\vee}$ is the image of a minuscule coweight in $\Lambda_{M_{0}}$, and $D_{0}+\varpi^{\vee}$ is the translation of $D_{0}$ by $\varpi^{\vee}$. It is easy to see that $\xx_{\gamma}^{\nu}(D_{0})\subset F_{\gamma}$. Applying the reduction of Arthur-Kottwitz, the open sub variety $F_{\gamma}\backslash \xx_{\gamma}^{\nu}(D_{0})$ is naturally stratified into finite unions of $S_{Q}^{\nu'}\cap F_{\gamma}$. Since the two truncation parameters differ by a minuscule coweight, by proposition \ref{gkmreg}, we have $S_{Q}^{\nu'}\subset F_{\gamma}$, so $S_{Q}^{\nu'}\cap F_{\gamma}=S_{Q}^{\nu'}$. 

Now we proceed by induction. Suppose that the lemma is proved for all the Levi subgroups $M\in \cl(M_0)$, then $\xx_{\gamma}^{M,\nu'}(D_{0}^{Q})$ are all cohomologically pure for all $Q\in \cp(M)$ and all $\nu'\in \Lambda_{M}$. By proposition \ref{klretraction} and \ref{locally closed}, we see that $F_{\gamma}\backslash \xx_{\gamma}^{\nu}(D_{0})$ is cohomologically pure. Now the long exact sequence
$$\cdots\to H^{i-1}(\xx_{\gamma}^{\nu}(D_{0}))\to H^{i}_{c}(F_{\gamma}\backslash \xx_{\gamma}^{\nu}(D_{0}))\to H^{i}(F_{\gamma})\to H^{i}(\xx_{\gamma}^{\nu}(D_{0}))\to \cdots 
$$ 
will split into short exact sequence
$$
0 \to H^{i}_{c}(F_{\gamma}\backslash \xx_{\gamma}^{\nu}(D_{0}))\to H^{i}(F_{\gamma})\to H^{i}(\xx_{\gamma}^{\nu}(D_{0}))\to 0,
$$
because $H^{i-1}(\xx_{\gamma}^{\nu}(D_{0}))$ is of weight at most $i-1$ by \cite{weil2}. The claim then follows from the above short exact sequence.

\end{proof}

Now we come to the proof of theorem \ref{main1}. We will prove a slightly stronger result. A positive $(G,M_0)$-orthogonal family $D=(\mu_{P})_{P\in \cp(M_0)}$ is said to be \emph{regular} with respect to $D_{0}$ if $\mu_{P}\in R_{P},\,\forall\,P\in \cp(M_0)$.

\begin{thm}\label{equivalence of two conjectures}
Suppose that $F_{\gamma}^{M}$ is cohomologically pure for any proper Levi subgroup $M$ of $G$ containing $M_0$. Let $D$ be a positive $(G,M_0)$-orthogonal family which is regular with respect to $D_{0}$. Then the truncated affine Springer fiber $\xx_{\gamma}(D)$ is cohomologically pure if and only if $F_{\gamma}$ is. 
\end{thm}

\begin{proof}

The complication that some connected components of $\xx_{\gamma}(D)$ don't contain $F_{\gamma}$ is already treated in lemma \ref{fundamental variant}, so we can suppose that every connected component of $\xx_{\gamma}(D)$ contains a translation of $F_{\gamma}$. Applying the Arthur-Kottwitz reduction to every connected component $\xx_{\gamma}^{\nu}(D)$, we get a stratification of $\xx_{\gamma}^{\nu}(D)\backslash F_{\gamma}$ into finite union of $S_{Q}^{\nu'}\cap \xx_{\gamma}^{\nu}(D)$. The hypothesis that $D$ is regular with respect to $D_{0}$ implies that each $S_{Q}^{\nu'}$ is either contained in $\xx_{\gamma}^{\nu}(D)$ or disjoint from it. Applying lemma \ref{fundamental variant} to the Levi subgroups $M\in \cl(M_0)$, we see that all the truncated affine Springer fibers $\xx_{\gamma}^{M,\nu'}(D_{0}^{Q}),\,Q\in \cp(M)$ are cohomologically pure, which implies that $\xx_{\gamma}^{\nu}(D)\backslash F_{\gamma}$ is cohomologically pure by proposition \ref{klretraction} and \ref{locally closed}. Now the theorem follows from the same argument as the last part of the proof of lemma \ref{fundamental variant}.

\end{proof}

\begin{rem}\label{clpurity}

When $T$ splits over $F$, Chaudouard and Laumon have conjectured that $\xx_{\gamma}(D)$ is cohomologically pure whenever $D$ is \emph{sufficiently regular}, see \cite{cl} conjecture 1.3. Here $D=(\lambda_{B})_{B\in \cp(T)}$ is said to be \emph{sufficiently regular} if $\lambda_{B}$ lies in the chamber indexed by $B$ and is \emph{sufficiently} far from the walls bounding the chamber. However, they don't have a lower bound on the distance of $\lambda_{B}$ to the walls for $D$ to be sufficiently regular, and the word \emph{sufficiently} should be understood as \emph{as large as necessary}. In this sense, when $D$ is sufficiently regular, it is regular with respect to $D_{0}$. By our theorem \ref{equivalence of two conjectures}, their purity conjecture is equivalent to the conjectures \ref{gkmconj1} and \ref{gkmconj2}.

\end{rem}

\section{Reformulation of the rationality conjecture}

In this section, we will work with the group $G=\pgl_{d+1}$ instead of $\gl_{d+1}$ to simplify some technical points. We are only interested in the case when $\gamma$ is unramified, i.e. we assume that $T$ is the maximal torus of $G$ of the diagonal matrices. Let $B_{0}$ be the Borel subgroup of $G$ of the upper triangular matrices. We conserve the notations of the previous sections. 

We will assume that $F_{\gamma}$ is cohomologically pure, and give a geometric reformulation of the rationality conjecture \ref{rationality}. The main ingredient of the proof is the calculation by Chaudouard and Laumon \cite{cl} of the $T$-equivariant homology of cohomologically pure truncated affine Springer fibers.

\subsection{Symmetric algebras}

Let 
$$
\bs=\sym(\kt)=\bigoplus_{i=0}^{\infty}\sym^{i}((\kt^{*})^{*})
$$ 
be the ring of polynomial functions on $\kt^{*}$ with coefficients in $\qlbar$. Let
$$
\dd=\sym(\kt^{*})=\bigoplus_{i=0}^{\infty}\sym^{i}(\kt^{*})
$$
be the ring of linear differential operators with constant $\qlbar$-coefficients on $\kt^{*}$. The non-degenerate perfect pairing 
$$
\langle\,,\rangle:\;\dd\times \bs\to \qlbar
$$
given by $\langle \partial, P\rangle=\partial(P)(0)$ satisfies

\begin{equation}\label{sd}
\langle \partial\partial', P\rangle=\langle \partial, \partial' P\rangle.
\end{equation}

It induces a natural duality between the homogeneous degree $n$ pieces $\dd^{n}$ and $\bs_{n}$. For a homogeneous ideal $I=\bigoplus_{i=1}^{\infty}I_{i}\subset \dd$, let 
$$
\bs\{I\}=\{f\in S(\kt)\mid \partial f=0,\,\forall \,\partial\in I\}.
$$

By (\ref{sd}), we have
$$
\bs\{I\}=I^{\perp}=\bigoplus_{i=0}^{\infty}I_{i}^{\perp}.
$$

For each root $\alpha\in \Phi^{+}$, we will denote by $\partial_{\alpha}\in \dd$ the corresponding differential operator on $\kt^{*}$.

\subsection{$T$-equivariant homology}

Let $X$ be a separated $k$-scheme of finite type endowed with an algebraic action of the torus $T$. The $T$-equivariant cohomology of $X$ is defined to be the cohomology of the quotient stack $[X/T]$, i.e.
$$
H^{*}_{T}(X)=H^{*}_{T}(X,\qlbar)=H^{*}([X/T],\qlbar).
$$
It is a $\bz_{\geq 0}$-graded $\qlbar$-algebra with respect to the cup product. Via the structural morphism 
\begin{equation}\label{str}
[X/T]\to BT=[\spec(k)/T],
\end{equation} 
it becomes a graded algebra over the ring $H^{*}_{T}(\spec(k))$, which is isomorphic to the $k$-algebra $\dd$ via the Chern-Weil isomorphism. More precisely, we have natural isomorphism $\dd^{1}=X^{*}(T)\otimes \qlbar$, and given $\chi\in X^{*}(T)$, let $c_{1}(\chi)$ be the first Chern class associated to the resulting line bundle on $BT$. Then $c_{1}$ extends to a degree-doubling isomorphism
$$
\dd\to H^{*}(BT)=H^{*}_{T}(\spec(k)).
$$

The Leray spectral sequence associated to the structural morphism (\ref{str}) is
$$
E_{2}^{p,q}=H^{p}_{T}(\spec(k))\otimes H^{q}(X)\Rightarrow H^{p+q}_{T}(X).
$$
When $X$ is cohomologically pure, the spectral sequence will degenerate at $E_{2}$, and we get a non-canonical isomorphism
\begin{equation}\label{equiord}
H^{i}_{T}(X)\cong  \bigoplus_{p+q=i}H^{p}(X)\otimes \dd^{q}.
\end{equation}

In \cite{gkm1} and \cite{cl}, the authors work with homology instead of cohomology in order to facilitate the process of taking limits. This is defined by taking dualities, for example,
$$
H^{T}_{*}(X)=\Hom\big(H^{*}_{T}(X),\qlbar \big).
$$

Using the natural duality between $\bs$ and $\dd$, the isomorphism (\ref{equiord}) can be rewritten as
$$
H^{T}_{i}(X)\cong \bigoplus_{p+q=i} H_{p}(X)\otimes \bs_{q}.
$$

\subsection{$T$-equivariant homology of $F_{\gamma}$}

Given a positive $(G,T)$-orthogonal family $D$, suppose that $\xx_{\gamma}(D)$ is \emph{cohomologically pure}, Chaudouard and Laumon \cite{cl} have calculated the $T$-equivariant homology of $\xx_{\gamma}(D)$. The result is expressed in terms of the $T$-fixed points and $1$-dimensional $T$-orbits in $\xx_{\gamma}(D)$. We adapt their result to our situation.

For $\bnn\in\bn^{d}$, let $\gamma\in \kt(\co)$ be an element in minimal form with root valuation $\bnn$. Given a regular point $x_{0}\in \xx_{\gamma}^{\reg}$, up to a suitable translation by $\Lambda$, we will assume that $H_{B_{0}^{-}}(x_{0})=0$. Let $E_{\bnn}=\ec(x_{0})$ and let 
$$
\Lambda_{\bnn}:=\{\lambda\in X_{*}(T)\mid \lambda \in \ec(x_{0}),\,\nu_{G}(\ep^{\lambda})=\nu_{G}(x_{0})\}.
$$

For each $\alpha\in \Phi^{+}$, let 
$$
R_{\alpha, i}=\sum_{\lambda \text{ satisfying }(*)} (1-\alpha^{\vee})^{i}\lambda\otimes \bs\{\partial_{\alpha}^{i}\}\subset \qlbar^{\Lambda_{\bnn}} \otimes \bs,
$$
where $(*)$ refers to the condition:
\begin{equation*}
\lambda, \alpha^{\vee}\lambda,\cdots, (\alpha^{\vee})^{i}\lambda \in \Lambda_{\bnn}.
\end{equation*}

\begin{thm}[Chaudouard-Laumon \cite{cl}, prop. 10.3]

Assume that $F_{\gamma}$ is cohomologically pure, then we have the exact sequence
\begin{equation}\label{fundamental}
0\to \sum_{\alpha\in \Phi^{+}}\sum_{i=1}^{\val(\alpha(\gamma))} R_{\alpha, i} \to \qlbar^{\Lambda_{\bnn}}\otimes \bs\to H^{T}_{*}(F_{\gamma})\to 0.
\end{equation}

\end{thm}

We will write the first term in the exact sequence  (\ref{fundamental}) as $R_{\bnn}$. For a $\bz_{\geq 0}$-graded $\qlbar$-vector space $M=\bigoplus_{n=0}^{+\infty}M_{n}$, we will define its Poincar\'e series to be
$$
\sum_{n=0}^{+\infty} \dim(M_{n})q^{n}.
$$
The $\qlbar$-algebra $\bs$ is naturally $\bz_{\geq 0}$-graded, it induces a grading on the three terms in the exact sequence (\ref{fundamental}). We will denote their Poincar\'e series by $Q_{1,\bnn}(q),\,Q_{2,\bnn}(q),\,Q_{3,\bnn}(q)$ respectively. Let 
$$
Q_{i}(q;\vec{t}\;)=\sum_{n_{1}=1}^{+\infty}\cdots\sum_{n_{d}=1}^{+\infty} Q_{i,(n_{1},\cdots,n_{d})}(q)t_{1}^{n_{1}}\cdots t_{d}^{n_{d}},\quad i=1,2,3.
$$

Since $F_{\gamma}$ is assumed to be cohomologically pure, we have isomorphism
$$
H^{T}_{*}(F_{\gamma})\cong  H_{*}(F_{\gamma})\otimes \bs_{*},
$$
from which we deduce that 
$$
Q(q;\vec{t}\;)=(1-q)^{d}Q_{3}(q;\vec{t}\;).
$$
So the rationality of $Q(q;\vec{t}\;)$ is the same as that of $Q_{3}(q;\vec{t}\;)$. By the exact sequence (\ref{fundamental}), this is the same as the rationality of $Q_{2}(q;\vec{t}\;)-Q_{1}(q;\vec{t}\;)$.

\subsection{Toric varieties}

Let $\widehat{T}$ be the dual torus of $T$, we have
$$
X_{*}(\widehat{T})=X^{*}(T),\quad X^{*}(\widehat{T})=X_{*}(T).
$$

Let $\Sigma$ be a complete fan in $\kt^{*}=X_{*}(\wht)\otimes \br$. For $n=0,\cdots, \dim(T)$, let $\Sigma(n)$ be the set of $n$-dimensional cones in $\Sigma$. Let $Y=Y_{\Sigma}$ be the toric compactification of $\wht$ according to the fans $\Sigma$.
To each cone $\sigma\in \Sigma$, we associate a $\wht$-invariant affine open sub variety 
$$
U_{\sigma}:=\spec(\qlbar[\,\sigma^{\vee}\cap X^{*}(\wht)]),
$$ 
where $\sigma^{\vee}=\{\lambda\in X^{*}(\wht)\mid \langle \lambda, x\rangle \geq 0,\,\forall \, x\in \sigma\}$. Putting together, they give an affine covering of $Y$.
The map $\sigma\to U_{\sigma}$ is inclusion preserving. 
We have also an inclusion-reversing bijection $\sigma \to D_{\sigma}$ between the cones and the $\wht$-invariant closed irreducible sub varieties of $Y$. More precisely, $D_{\sigma}$ is contained in the union of the affine open sub varieties $U_{\tau},\,\sigma\subset\tau$. In each $U_{\tau}$, $D_{\sigma}\cap U_{\tau}$ is defined by the ideal generated by
$$
\{\lambda\in X^{*}(\wht)\mid \lambda\in \tau^{\vee},\,\lambda\notin \sigma^{\perp}\},
$$
where $\sigma^{\perp}=\{\lambda\in X^{*}(\wht)\mid \langle \lambda, x\rangle = 0,\,\forall \, x\in \sigma\}$. It is easy to see that
$$
D_{\sigma}\cap U_{\tau}=\spec(\qlbar[\,\tau^{\vee}\cap \sigma^{\perp}\cap X^{*}(\wht)]),
$$ 
and the codimension of $D_{\sigma}$ in $Y$ is equal to the dimension of $\sigma$. In particular, the $D_{\sigma}$'s, for $ \sigma\in \Sigma(1)$, generate the group $\Div_{\wht}(Y)$ of the $\wht$-invariant Weil divisors in $Y$.

For $\sigma\in \Sigma(1)$, let $\varpi_{\sigma}$ be the generator of the semi-group $\sigma\cap X_{*}(\wht)$. To each $\lambda\in X^{*}(\wht)$, viewed as a meromorphic function on $Y$, is associated its principal divisor
$$
(\lambda)=\sum_{\sigma\in \Sigma(1)} \varpi_{\sigma}(\lambda)D_{\sigma}.
$$

Let $\Cl(Y)$ be the class group of Weil divisors on $Y$, then we have the exact sequence
\begin{equation}\label{cly}
0\to X^{*}(\wht)\to \Div_{\wht}(Y)\to \Cl(Y)\to 0.
\end{equation}

For $D\in \Div_{\wht}(Y)$, let $[D]$ be its equivalent class in $\Cl(Y)$. It is said to be \emph{effective} if $D=\sum_{\sigma} n_{\sigma} D_{\sigma} $ with positive coefficients. In this case, we write $D\geq 0$. We write also $D_{1}\geq D_{2}$ if $D_{1}-D_{2}\geq 0$.

The toric variety $Y$ has a quotient construction similar to that of $\bp^{n}$. We introduce the \emph{homogeneous coordinate ring} 
$$
A=\qlbar[\,y_{\sigma};\,\sigma\in \Sigma(1)],
$$
which is graded by the abelian group $\Cl(Y)$ in the following way: To every monomial $\prod_{\sigma}y_{\sigma}^{n_{\sigma}},\,n_{\sigma}\in \bz$, we associate the divisor $D=\sum_{\sigma}n_{\sigma}D_{\sigma}$. This monomial, written $y^{D}$, is of degree $[D]\in \Cl(Y)$. In this way, we get the grading
$$
A=\bigoplus_{[D]\in \Cl(Y)} A[D].
$$
The group $\bbg_{m}^{\Sigma(1)}$ acts naturally on $A$, hence on $\spec(A)$. Let 
$$
\widehat{\Cl}(Y)=\Hom_{\bz}(\Cl(Y),\,\qlbar^{\times}).
$$ 
It is a sub-torus of $\bbg_{m}^{\Sigma(1)}$ if one takes the duality of the exact sequence (\ref{cly}). In this way, it acts on $\spec(A)$ as well. To define the quotient, we need to introduce the irrelevant ideal $B\subset A$, which is generated by the $y^{\hat{\tau}},\,\tau\in \Sigma$, where 
$$
y^{\hat{\tau}}=\prod_{\sigma\in \Sigma(1),\,\sigma\nsubseteq \tau}y_{\sigma}.
$$
Now we have
$$
Y=\big[\spec(A)\backslash \mathbf{V}(B)\big]\sslash\widehat{\Cl}(Y).
$$
Similarly to the case of $\bp^{n}$, we have an exact functor $L\to \widetilde{L}$ from the category of graded $A$-modules to the category of quasi-coherent sheaves on $Y$. It sends $A$ to the structure sheaf and finitely generated graded $A$-modules to coherent sheaves on $A$. Furthermore, all the quasi-coherent sheaves on $A$ are of the form $\widetilde{L}$ for some graded $A$-modules $L$.

Let $D=\sum_{\sigma}n_{\sigma} D_{\sigma}$ be a $\wht$-invariant Weil divisor on $Y$. We associate to it a lattice polytope $P_{D}$ in $\kt=X^{*}(\wht)\otimes \br$:
$$
P_{D}=\left\{a\in \kt| \langle \varpi_{\sigma},\,a\rangle + n_{\sigma}\geq 0,\;\forall\,\sigma\in \Sigma(1)  \right\}.
$$
The defining inequality is reminiscent of the inequality $(\lambda)+D\geq 0$, and we have

\begin{prop}

Let $D$ be a $\wht$-invariant Weil divisor on $Y$, then we have
$$
\Gamma(Y, \co_{Y}(D))=\bigoplus_{\lambda\in X^{*}(\wht)\cap P_{D}} \bc\cdot \lambda.
$$

\end{prop}

This is related to the homogeneous coordinate ring $A$ as follows: Let 
$$
A'=\qlbar[y_{\sigma}^{\pm 1};\,\sigma\in \Sigma(1)].
$$ 
The pull back of rational functions corresponding to the natural projection 
$$
\spec(A)\backslash \mathbf{V}(B)\to Y
$$
induces an injective morphism $\qlbar[X^{*}(\wht)]\to A'$ given by
\begin{equation}\label{lambda function}
\lambda\in X^{*}(\wht)\longmapsto y^{\lambda}=\prod_{\sigma\in \Sigma(1)}y_{\sigma}^{\varpi_{\sigma}(\lambda)}.
\end{equation}
Given $\lambda\in X^{*}(\wht)\cap P_{D}$, we define its $D$-homogenization to be 
$$
y^{\langle\lambda,D\rangle}=\prod_{\sigma\in \Sigma(1)}y_{\sigma}^{\langle\varpi_{\sigma},\lambda\rangle+n_{\sigma}}.
$$
One verifies that it induces an isomorphism
$$
A[D]\cong \bigoplus_{\lambda\in X^{*}(\wht)\cap P_{D}} \bc\cdot \lambda.
$$

Conversely, given a full dimensional lattice polytope $P$ in $\kt$, one can construct its normal fan $\Sigma_{P}$ in $\kt^{*}$ and a $\wht$-invariant ample divisor $D_{P}$ on $Y_{\Sigma_{P}}$. In this way, we have a bijection between the set of full dimensional lattice polytopes $P$ in $\kt$ and the set of the pairs $(Y_{\Sigma_{P}},D_{P})$, where $Y_{\Sigma_{P}}$ is a complete toric variety compactifying $\wht$ and $D_{P}$ is a $\wht$-invariant divisor on $Y_{\Sigma_{P}}$.

\begin{prop}
Let $D$ be a $\wht$-invariant ample divisor on $Y$, then
$$
H^{i}(Y,\co_{Y}(D))=0,\quad \forall\,i\neq 0.
$$
\end{prop}




\subsection{Geometric interpretation}

For any $P\in \cf(T)$, we have the strongly convex rational polyhedral cone in $\ka_{T}^{G,*}=\kt^{*}$:
$$
a_{P}^{G,+}:=\{\chi\in \ka_{T}^{G,*}\mid \forall \alpha\in \Phi(M_{P},T),\,\chi(\alpha^{\vee})=0;\, \forall \beta\in \Phi(N_{P},T),\,\chi(\beta^{\vee})\geqslant 0\}.
$$
Then $\Sigma=(a_{P}^{G,+})_{P\in \cf(T)}$ is a complete fan in $\kt$. (This is the reason why we work with $\pgl_{d+1}$ instead of $\gl_{d+1}$). Let $Y=Y_{\Sigma}$ be the toric compactification of $\widehat{T}$ according to the fans $\Sigma$. Since we have supposed that $\bnn\in \bn^{d}$, the normal fan of the polytope $E_{\bnn}\subset \ka_{T}^{G}=\kt$ is $\Sigma$. So $E_{\bnn}$ defines an ample divisor $D_{\bnn}$ on $Y$: Suppose that 
\begin{equation}\label{definepoly}
E_{\bnn}=\{a\in \kt\,|\, \langle \varpi_{\sigma},a\rangle +M_{\bnn,\sigma}\geq 0,\;\forall\, \sigma\in \Sigma(1)\},
\end{equation}
then $D_{\bnn}$ is defined to be $\sum_{\sigma\in \Sigma(1)}M_{\bnn,\sigma}\cdot D_{\sigma}$. According to the toric dictionary, we have 
$$
h^{0}(D_{\bnn}):=\dim\big( H^{0}(Y,\co_{Y}(D_{\bnn}))\big)=|E_{\bnn}\cap X_{*}(T)|=|\Lambda_{\bnn}|.
$$

Our main tool in this section is the Hirzebruch-Riemann-Roch theorem.

\begin{thm}[Hirzebruch-Riemann-Roch]

Let $X$ be a proper $k$-scheme. There exists a Todd class $\Td(X)$ of $X$ such that for any vector bundle $E$ on $X$, we have
$$
\chi(X, E)=\int_{X} \ch(E)\cap \Td(X), 
$$
where $\ch(E)$ is the Chern character of $E$.

\end{thm}

\begin{prop}

The power series $Q_{2}(q;\vec{t}\;)$ is a rational fraction.

\end{prop}

\begin{proof}

It is easy to see that
\begin{eqnarray*}
Q_{2}(q;\vec{t}\;)&=& \sum_{\bnn\in \bn^{d}}\sum_{q=0}^{+\infty}\dim\left(\qlbar^{\Lambda_{\bnn}}\otimes \bs_{m}\right)q^{m}t^{\bnn}\\
&=&(1-q)^{-d}\sum_{\bnn\in \bn^{d}}|\Lambda_{\bnn}|\cdot t^{\bnn}\\
&=:& (1-q)^{-d}Q_{4}(\vec{t}\;),
\end{eqnarray*}
where $t^{\bnn}=t_{1}^{n_{1}}\cdots t_{d}^{n_{d}}$. So it is enough to prove that $Q_{4}(\vec{t}\;)$ is a rational fraction. We can further regroup the summation in $\bnn\in \bn^{d}$ according to the ordered partition $\bar{d}=(\{i_{1},\cdots, i_{j_{1}}\}; \cdots;\{i_{j_{r-1}+1},\cdots, i_{j_{r}}\})$ of the set $\{1,\cdots, d\}$, written $\bar{d}\vdash \{1,\cdots, d\}$, i.e.
$$
\sum_{\bnn\in \bn^{d}}=\sum_{\bar{d}\,\vdash \{1,\cdots, d\}}\sum_{n_{i_{1}}=\cdots=n_{i_{j_{1}}}\\<\cdots< \\n_{i_{j_{r-1}}+1}=\cdots =n_{i_{j_{r}}}},
$$
we can write
$$
Q_{4}(\vec{t}\;)=\sum_{\bar{d}\,\vdash \{1,\cdots, d\}}Q_{4,\bar{d}}(\vec{t}\;),
$$
and it is sufficient to prove that each $Q_{4,\bar{d}}(\vec{t}\;)$ is a rational fraction, since there are only finitely many ordered partitions.

\begin{lem}\label{linearface}

For each ordered partition $\bar{d}\vdash \{1,\cdots, d\}$ and for each $\sigma\in \Sigma(1)$, there is a linear form $L_{\bar{d},\sigma}:\bz^{d}\to \bz$ such that in the defining equation (\ref{definepoly}) of $E_{\bnn}$, we have 
$$
M_{\bnn,\sigma}=L_{\bar{d},\sigma}(\bnn)
$$ 
for all the $\bnn\in \bn^{d}$ which are of type $\bar{d}$.

\end{lem}

\begin{proof}

Let $P_{\sigma}\in \cf(T)$ be the parabolic subgroup of $G$ corresponding to $\sigma$, let $B\in \cp(T)$ be any Borel subgroup contained in $P_{\sigma}$, then we have
$$
M_{\bnn,\sigma}=-\varpi_{\sigma}(H_{B}(x_{0})).
$$
Take a minimal gallery $B_{1},\cdots, B_{r}\in \cp(T)$ such that $B_{1}=B_{0}^{-}$ and $B_{r}=B$, and $B_{i},B_{i+1}$ are adjacent Borel subgroups, $i=1,\cdots,r-1$. By proposition \ref{gkmreg}, we have
$$
H_{B_{i+1}}(x_{0})-H_{B_{i}}(x_{0})=\val(\beta_{i}(\gamma))\cdot \beta_{i}^{\vee},\quad i=1,\cdots, r-1,
$$
where $\beta_{i}$ is the unique root which is positive with respect to $B_{i+1}$ while negative with respect to $B_{i}$. By the definition of root valuation, 
$$
\val(\beta_{i}(\gamma))=n_{j(i)},
$$
for some $j(i)\in \{1,\cdots, d\}$, depending on the type $\bar{d}$ of $\bnn$. So
\begin{eqnarray*}
M_{\bnn,\sigma}&=&-\varpi_{\sigma}(H_{B}(x_{0}))=-\sum_{i=1}^{r-1}\varpi_{\sigma} \big(H_{B_{i+1}}(x_{0})-H_{B_{i}}(x_{0})\big)+\varpi_{\sigma} (H_{B_{1}}(x_{0}))\\
&=& -\sum_{i=1}^{r-1}\varpi_{\sigma} \big(n_{j(i)}\cdot \beta_{i}^{\vee}\big)
\end{eqnarray*}
depends linearly on $\bnn$ according to its type $\bar{d}$.

\end{proof}

Since $D_{\bnn}$ is an ample divisor, we have $H^{i}(Y,\co_{Y}(D_{\bnn}))=0,\,\forall i\neq 0$. By the Hirzebruch-Riemann-Roch theorem, we have
$$
|\Lambda_{\bnn}|=h^{0}(D_{\bnn})=\chi(Y,\co_{Y}(D_{\bnn}))=\int_{Y}\ch(\co_{Y}(D_{\bnn}))\cap \Td(Y).
$$ 

Since $D_{\bnn}=\sum_{\sigma\in \Sigma(1)}M_{\bnn,\sigma}\cdot D_{\sigma}$ and $M_{\bnn,\sigma}$ is linear for all the $\bnn$ of the fixed type $\bar{d}$ by lemma \ref{linearface}, there exists a polynomial $Q_{5,\bar{d}}\in \bq[T_{1},\cdots, T_{d}]$ such that 
$$
|\Lambda_{\bnn}|=Q_{5,\bar{d}}(\bnn),
$$
for all $\bnn$ of type $\bar{d}$. This implies that 
$$
Q_{4,\bar{d}}(\vec{t}\;)=\sum_{\bnn \text{ of type }\bar{d}}Q_{5,\bar{d}}(\bnn)t^{\bnn}
$$ 
is a rational fraction, which concludes the proof.

\end{proof}

As a consequence, if we assume the cohomological purity of $F_{\gamma}$, the conjecture \ref{rationality} is equivalent to 

\begin{conj}

The power series $Q_{1}(q;\vec{t}\;)$ is a rational fraction.

\end{conj}

We will give a geometric interpretation of the module 
$
R_{\bnn}=\sum_{\alpha\in \Phi^{+}}\sum_{i=1}^{\val(\alpha(\gamma))}R_{\alpha,i},
$
or more precisely, of the homogeneous degree $l$ part of
$$
R_{\alpha, i}=\sum_{\lambda \text{ satisfying }(*)} (1-\alpha^{\vee})^{i}\lambda\otimes \bs\{\partial_{\alpha}^{i}\}\subset \qlbar^{\Lambda_{\bnn}} \otimes \bs,
$$
where $(*)$ refers to the condition:
$$
\lambda, \alpha^{\vee}\lambda,\cdots, (\alpha^{\vee})^{i}\lambda \in \Lambda_{\bnn}.
$$
As in the equation (\ref{lambda function}), the coroot $\alpha^{\vee}$ defines the meromorphic function on $Y$
$$
y^{\alpha^{\vee}}=\prod_{\sigma\in \Sigma(1)}y_{\sigma}^{\langle\varpi_{\sigma},\alpha^{\vee}\rangle}.
$$

Let 
$$
y^{\alpha^{\vee}}_{+}=\prod_{\substack{\sigma\in \Sigma(1)\\ \langle\varpi_{\sigma},\alpha^{\vee}\rangle\geq 0}}y_{\sigma}^{\langle\varpi_{\sigma},\alpha^{\vee}\rangle},\quad y^{\alpha^{\vee}}_{-}=\prod_{\substack{\sigma\in \Sigma(1)\\ \langle\varpi_{\sigma},\alpha^{\vee}\rangle< 0}}y_{\sigma}^{-\langle\varpi_{\sigma},\alpha^{\vee}\rangle}.$$

Let $D_{\alpha}$ be the divisor on $Y$ defined by the homogeneous polynomial
$
y^{\alpha^{\vee}}_{+}-y^{\alpha^{\vee}}_{-}.
$ 
After homogenisation, it is easy to see

\begin{prop}
We have the identity
$$
\sum_{\lambda \text{ satisfying }(*)} (1-\alpha^{\vee})^{i}\lambda
=H^{0}(Y,\co_{Y}(D_{\bnn}-iD_{\alpha})).
$$
\end{prop} 

To interpret $\bs\{\partial_{\alpha}^{i}\}$, we use the fact that $\bs$ is the homogeneous coordinate ring of $\bp(\kt^{*})=\bp^{d-1}$. (Here we suppose that $d\geq 3$, this is not an essential constraint since we will prove the rationality conjecture for $\gl_{2}$ and $\gl_{3}$ by direct calculations.) The vector $\partial_{\alpha}$ in $\kt^{*}$ defines a point $p_{\alpha}\in \bp(\kt^{*})$ which represents the line $k\cdot \partial_{\alpha}$ in $\kt^{*}$. Let $\km_{\alpha}$ be the defining ideal of $p_{\alpha}$ in $\bp(\kt^{*})$.

\begin{lem}[Ensalem-Iarrobino\cite{ei}]

We have $\bs_{l}\{\partial_{\alpha}^{i}\}=(\km_{\alpha}^{l-i})_{l}$, with the subscript $l$ refers to the homogeneous degree $l$ parts. Let $\tp_{\alpha}$ be the blow-up of $\bp(\kt^{*})$ at $p_{\alpha}$, let $E_{\alpha}$ be the resulting exceptional divisor, then
$$
\bs_{l}\{\partial_{\alpha}^{i}\}=H^{0}(\tp_{\alpha},\co(lH+(i-l)E_{\alpha})),
$$
where $H$ is the pull back of a general hyperplane in $\bp(\kt^{*})$. 
\end{lem}

\begin{proof}

For the first assertion, we complete $\partial_{\alpha}$ into a basis of $\kt^{*}$ and write the resulting coordinate to be $(X_{0},\cdots, X_{n})$. It is easy to see that $\bs_{l}\{\partial_{\alpha}^{i}\}$ is generated by the monomials
$
X_{0}^{d_{0}}\cdots X_{n}^{d_{n}},
$
with $\sum d_{i}=l$ and $d_{0}\leq i$. But this is exactly $(\km_{\alpha}^{l-i})_{l}$. It is evident that the second assertion is a reformulation of the first one.

\end{proof}

\begin{cor}
We have the equality
$$
(R_{\bnn})_{l}=\sum_{\alpha\in \Phi^{+}}\sum_{i=1}^{\val(\alpha(\gamma))} H^{0}(Y,\co_{Y}(D_{\bnn}-iD_{\alpha}))\otimes H^{0}(\tp_{\alpha},\co(lH+(i-l)E_{\alpha})).
$$

\end{cor}

\section{The generating series for $\gl_{2}$}

Let $G=\gl_{2}$. Any element $\gamma\in \kt(\co)$ is automatically in minimal form, let $n$ be the root valuation of $\gamma$. Let $F_{\gamma}$ be the fundamental domain of $\xx_{\gamma}$ containing $x_{0}$, where $x_{0}$ is the Kostant regular point defined in \S2.3. By proposition \ref{classical example}, $\ec(x_{0})$ is the interval in $\kt$ between $(0,n),\,(n,0)\in X_{*}(T)$. The fundamental domain is thus the intersection
$$
F_{\gamma}=\sch(n,0)\cap \xx_{\gamma}.
$$

\begin{prop}

We have $F_{\gamma}=\sch(n,0)$, so it admits an affine paving. Its Poincar\'e polynomial is $\sum_{i=0}^{n}q^{n}$.
 
\end{prop}

\begin{proof}

For any $a\in k$, it is evident that for any lattice $L$ in $F^{2}$, we have
$$
\gamma\cdot L\subset L\Longleftrightarrow (a\ep^{n}\mathrm{Id} +\gamma)\cdot L\subset L.
$$
So without any loss of generality, we can assume that both of the eigenvalues of $\gamma$ have valuation $n$. Now that $\sch(n,0)$ parametrise the lattices $L$ of index $n$ satisfying
$$
\kp^{n}\oplus \kp^{n}  \subset L\subset \co\oplus \co,
$$
we have 
$$
\gamma\cdot L\subset \kp^{n}\oplus \kp^{n} \subset L,
$$
which implies that $\sch(n,0)\subset F_{\gamma}$, hence the equality in the first assertion.

Let $I$ be the standard Iwahori subgroup, i.e. it is the inverse image of the Borel subgroup $B_{0}$ under the reduction $G(\co)\rightarrow G(k)$. Recall that we have the Bruhat-Tits decomposition
$$
\sch(n,0)=\bigsqcup_{\mu\in \sch(n,0)^{T}}I\ep^{\mu}K/K,
$$
from which we get the second assertion in the proposition.
\end{proof}

\begin{cor}
For the group $\gl_{2}$, we have 
$$
Q(q;\vec{t}\;)=\frac{1}{q-1}\left(\frac{q^{2}t}{1-qt}-\frac{t}{1-t} \right).
$$

\end{cor}

\section{The generating series for $\gl_{3}$}

Let $G=\gl_{3}$, let $\bnn=(n_{1},n_{2})\in \bn^{2},\, n_{1}\leq n_{2}$, let $\gamma\in \kt(\co)$ be in minimal form with root valuation $\bnn$. Let $F_{\gamma}$ be the fundamental domain of $\xx_{\gamma}$ containing $x_{0}$, where $x_{0}$ is the Kostant regular point defined in \S2.3.

\begin{prop}\label{intersection}
The fundamental domain $F_{\gamma}$ is the intersection of $\xx_{\gamma}$ with 
$$
\sch(2n_{1}+n_{2},0,0)\cap \diag(\ep^{-n_{2}},\ep^{-n_{1}},\ep^{-n_{1}})\cdot\sch(2n_{1}+n_{2},2n_{1}+n_{2},0).
$$
\end{prop}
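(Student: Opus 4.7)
The plan is to invoke the last proposition of \S2.3 and reduce the proof to a polytope identity. That proposition gives
\[
F_\gamma = \xx_\gamma \cap [\sch(\mu) \cap \ep^\lambda \cdot \sch(-\mu)]
\]
for $\mu = (0, n_1, n_1+n_2)$ and $\lambda = (2n_1, n_1+n_2, n_1+n_2)$, and its proof shows, via the Bruhat--Tits identification $\sch(\mu) = \xx^{|\mu|}(\mathrm{conv}(W\mu))$, that the Schubert intersection coincides with $\xx^{2n_1+n_2}(\ec(x_0))$, where $\ec(x_0)$ is the hexagon $D_1 \cap D_2$ for $D_1 = \mathrm{conv}(W\mu)$ and $D_2 = \mathrm{conv}(\lambda - W\mu)$. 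It therefore suffices to show that the Schubert intersection displayed in the current proposition also equals $\xx^{2n_1+n_2}(\ec(x_0))$.

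Applying the same Bruhat--Tits principle to the new Schubert varieties, one obtains $\sch(2n_1+n_2, 0, 0) = \xx^{2n_1+n_2}(D_1')$, where $D_1' = \mathrm{conv}(W \cdot (2n_1+n_2, 0, 0))$ is the triangle $\{x_i \geq 0\}$ in the affine plane $\sum x_i = 2n_1+n_2$, and $\diag(\ep^{-n_2}, \ep^{-n_1}, \ep^{-n_1}) \cdot \sch(2n_1+n_2, 2n_1+n_2, 0) = \xx^{2n_1+n_2}(D_2')$, where $D_2'$ is the translate by $(-n_2, -n_1, -n_1)$ of $\mathrm{conv}(W \cdot (2n_1+n_2, 2n_1+n_2, 0))$. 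Explicitly, $D_2'$ is the triangle with vertices $(2n_1, n_1+n_2, -n_1)$, $(2n_1, -n_1, n_1+n_2)$, $(-n_2, n_1+n_2, n_1+n_2)$, defined by $x_1 \leq 2n_1$, $x_2 \leq n_1+n_2$, $x_3 \leq n_1+n_2$. Using $\xx(D_1') \cap \xx(D_2') = \xx(D_1' \cap D_2')$, the task reduces to the polytope identity $D_1' \cap D_2' = \ec(x_0)$.

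The intersection $D_1' \cap D_2'$ is the hexagonal region in the plane $\sum x_i = 2n_1+n_2$ defined by $0 \leq x_1 \leq 2n_1$, $0 \leq x_2 \leq n_1+n_2$, $0 \leq x_3 \leq n_1+n_2$. Among the $\binom{6}{2}$ potential vertices obtained by intersecting pairs of bounding hyperplanes, only six lie in the feasible region; they are
\begin{gather*}
(0, n_1, n_1+n_2),\ (n_1, 0, n_1+n_2),\ (2n_1, 0, n_2), \\
(2n_1, n_2, 0),\ (n_1, n_1+n_2, 0),\ (0, n_1+n_2, n_1).
\end{gather*}
These coincide with the six vertices $H_{B'}(x_0)$, $B' \in \cp(T)$, computed by proposition \ref{classical example} (the hypothesis $n_1 \leq n_2$ enters through the minimal-form identity $\val(\alpha_{1,3}(\gamma)) = n_1$). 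Combining with the first paragraph yields the claim.

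The main obstacle is the polytope identity of the third paragraph: the nine discarded pairs (for instance $x_1 = 2n_1$ together with $x_2 = n_1+n_2$, which forces $x_3 = -n_1 < 0$) must be sorted out, and the six feasible vertices matched against the explicit formula for $H_{B'}(x_0)$. This is an elementary convex-geometry computation, but it is the main content of the argument; the rest is a formal application of the Bruhat--Tits description.
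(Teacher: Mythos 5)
Your proposal is correct and essentially reproduces the paper's argument: the paper likewise identifies $F_\gamma$ with the truncated affine Springer fiber at $\ec(x_0)$, writes the hexagon $\ec(x_0)$ as the intersection of a downward triangle and a translated upward triangle, and matches these with the two Schubert varieties via Bruhat--Tits. You carry out the polytope identity by explicit vertex enumeration rather than by appeal to the figure, and route the first step through the general $\gl_d$ proposition of \S2.3 rather than directly through the definition of $F_\gamma$, but these are presentational differences only.
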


\begin{proof}

By proposition \ref{classical example}, $\ec(x_{0})$ is the hexagon with vertices marked as indicated in figure \ref{truncation1}.

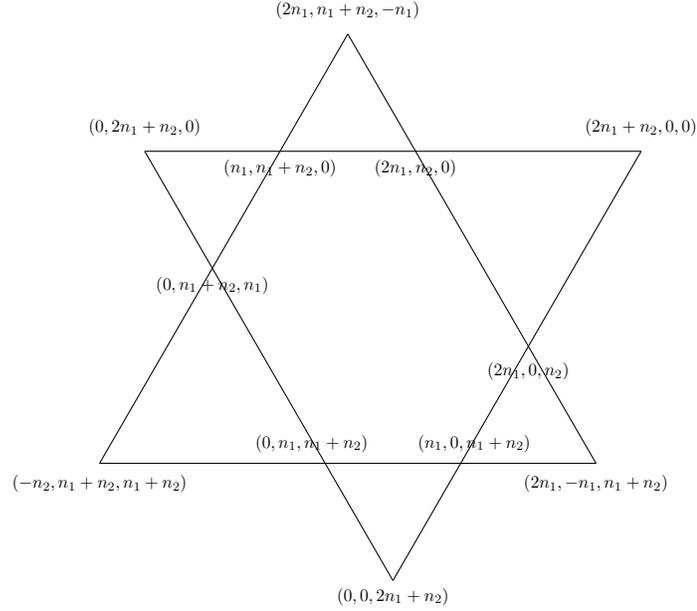
\begin{figure}[h]
\begin{center}
\begin{tikzpicture}[node distance = 2cm, auto, scale=0.6, transform shape]
\draw (-5.5, 3.17)--(5.5,3.17);
\draw (-5.5, 3.17)--(0,-6.35);
\draw (0,-6.35)--(5.5,3.17);
\draw (-1, 5.77)--(-6.5,-3.75);
\draw (-6.5, -3.75)--(4.5, -3.75);
\draw (4.5,-3.75)--(-1,5.77);

\node at (-1,6.3) {$(2n_{1},n_{1}+n_{2},-n_{1})$};
\node at (-6.5,-4.2) {$(-n_{2},n_{1}+n_{2},n_{1}+n_{2})$};
\node at (4.5,-4.2) {$(2n_{1},-n_{1},n_{1}+n_{2})$};

\node at (-5.5, 3.7) {$(0,2n_{1}+n_{2},0)$};
\node at (5.5, 3.7) {$(2n_{1}+n_{2},0,0)$};
\node at (0, -6.7){$(0,0, 2n_{1}+n_{2})$};

\node at (0.5, 2.8) {$(2n_{1},n_{2},0)$};
\node at (-2.5,2.8) {$(n_{1},n_{1}+n_{2},0)$};
\node at (-4,0.2) {$(0,n_{1}+n_{2}, n_{1})$};
\node at (-1.8, -3.3){$(0,n_{1},n_{1}+n_{2})$};
\node at (1.8, -3.3){$(n_{1},0,n_{1}+n_{2})$};
\node at (3, -1.7){$(2n_{1},0, n_{2})$};

\end{tikzpicture}
\caption{Hexagon as intersection of two triangles.}
\label{truncation1}
\end{center}
\end{figure}

This hexagon can also be represented as the intersection of two triangles as indicated also in the figure. Let $\vartriangle,\,\triangledown$ be the upward and the downward triangle in the figure. According to the example \ref{tschubert}, we see that $\ec(x)\in \triangledown$ if and only if $x\in \sch(2n_{1}+n_{2},0,0)$. We notice that $\vartriangle$ is the translation by $(-n_{2},-n_{1},-n_{1})$ of the triangle $\vartriangle'$ with vertices
$$
(2n_{1}+n_{2},2n_{1}+n_{2},0),\quad (2n_{1}+n_{2},0,2n_{1}+n_{2}),\quad (0,2n_{1}+n_{2},2n_{1}+n_{2}).
$$
Again using example \ref{tschubert}, we see that $\ec(x)\in \vartriangle'$ if and only if $x\in \sch(2n_{1}+n_{2},2n_{1}+n_{2},0)$, the result follows directly from these considerations.

\end{proof}

\subsection{Affine paving}

We can pave $F_{\gamma}$ in affine spaces, the strategy is the following: By proposition \ref{intersection}, we have 
$$
F_{\gamma}=\xx_{\gamma}\cap\sch(2n_{1}+n_{2},0,0)\cap \diag(\ep^{-n_{2}},\ep^{-n_{1}},\ep^{-n_{1}})\cdot\sch(2n_{1}+n_{2},2n_{1}+n_{2},0).
$$
So we firstly pave the intersection of the two affine schubert varieties in affine spaces, but this paving doesn't induce an affine paving of $F_{\gamma}$, we need to regroup the resulting pavements and do a second nonstandard paving. Our main result in this section is:

\begin{thm}
The fundamental domain $F_{\gamma}$ admits an affine paving, which only depends on the root valuation of $\gamma$.
\end{thm}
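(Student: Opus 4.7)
The plan is to carry out the two-step procedure sketched in the excerpt, exploiting Proposition \ref{intersection} which realizes $F_{\gamma}$ as the intersection of $\xx_{\gamma}$ with two affine Schubert varieties:
$$
F_{\gamma}=\xx_{\gamma}\cap\sch(2n_{1}+n_{2},0,0)\cap \diag(\ep^{-n_{2}},\ep^{-n_{1}},\ep^{-n_{1}})\cdot\sch(2n_{1}+n_{2},2n_{1}+n_{2},0).
$$
Both affine Schubert varieties admit standard Bruhat--Tits pavings by affine cells indexed by elements of the affine Weyl group, and taking intersections cell by cell yields an affine paving of the product of Schubert varieties. Each such cell $C$ carries coordinates coming from an explicit Iwasawa/Bruhat normal form for the coset $gK$, and the indexing set is controlled combinatorially by the lattice points of the hexagon in Figure \ref{truncation1}.

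Next I would intersect each cell $C$ with $\xx_{\gamma}$ using the defining equation $\Ad(g^{-1})\gamma\in \kg(\co)$. In the chosen coordinates this becomes a finite system of conditions on the coefficients of a handful of Laurent polynomial entries, and, crucially, these conditions involve only the valuations $\val(\alpha_{i,j}(\gamma))$, which by the minimal form hypothesis are determined by $\bnn=(n_{1},n_{2})$. For most cells the conditions are affine-linear in the natural coordinates and immediately cut out an affine subspace of $C$. The remaining cells are the ones that necessitate the second, nonstandard paving mentioned in the excerpt: the conditions are nonlinear in the standard coordinates and the naive intersection fails to be an affine space. The remedy is to group such problematic cells together, sharing the same image under a suitable retraction $f_{P}$ for an appropriate $P\in\cf(T)$, and then to reparametrize each combined piece with modified lattice generators chosen so that the $\gamma$-stability condition $\gamma\Lambda\subset\Lambda$ becomes transparent and affine-linear. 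Since everything happens in rank $2$, the list of problematic combinatorial types is short and can be treated by direct case analysis guided by the geometry of the hexagon.

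After regrouping, every piece of the resulting stratification is an affine space; ordering the pieces by dimension (following the idea of Proposition \ref{locally closed}) ensures that each partial union is a closed subvariety of $F_{\gamma}$, which is the definition of an affine paving. The rigidity statement follows for free from the construction: neither the Bruhat cell combinatorics nor the regrouping rule uses anything about $\gamma$ beyond its root valuations, so as $\gamma$ varies within its root-valuation class only the numerical coefficients in each cell equation change, not the support or linearity pattern; the combinatorial paving is therefore a function of $\bnn$ alone.

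The main obstacle is precisely the regrouping step. There is no canonical global coordinate system on $F_{\gamma}$ that simultaneously trivializes the affine Springer condition across all Bruhat cells, so one must identify by hand which cells fail to be affine and how they fit together. Finding the correct grouping and verifying that the chosen reparametrization really yields an affine space requires an explicit but unilluminating case analysis, and it is here that the restriction to $\gl_{3}$ is essential: for larger ranks the list of exceptional combinatorial types grows quickly and a more systematic device would be needed.
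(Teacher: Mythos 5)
Your outline matches the paper's strategy quite closely: both begin with Proposition \ref{intersection}, pave the ambient intersection of Schubert varieties, observe that the induced decomposition of $F_{\gamma}$ is not affine on all cells, regroup the offending cells along the fibers of a retraction $f_{P}$, and repave with a modified coordinate system; both conclude rigidity in $\bnn$ from the observation that everything is controlled by the root valuations alone.

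There is, however, one real gap worth flagging. Your first step --- ``taking intersections cell by cell yields an affine paving of the [intersection] of Schubert varieties'' --- does not hold as stated: if one paves each Schubert variety independently by Bruhat--Tits cells (for possibly different Iwahori subgroups) and then intersects cell with cell, the resulting pieces are not in general affine spaces, and the closure property needed for a paving is also not automatic. The paper sidesteps this by using a \emph{single} conjugated Iwahori subgroup $I'=\Ad(\diag(\ep^{n_{1}},\ep^{n_{2}},\ep^{n_{2}}))I$ adapted to both truncation parameters at once; the fact that its cells inside $\sch(2n_{1}+n_{2},0,0)$ meet the translated opposite Schubert variety in affine spaces, with well-ordered closures, is a nontrivial input imported from Corollary 2.3 of the author's earlier work on $\gl_{4}$ pavings, not a formal consequence of general Bruhat--Tits theory. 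A related, more minor point: ordering the final cells ``by dimension'' does not by itself make partial unions closed. The paper instead records an explicit Bruhat--Tits order attached to the relevant Iwahori subgroup ($I'$ on $V_{1}$ and $V_{1}'$, and the shifted $I'_{l}$ on each slice $V_{2,l}$, $V_{3,l}$), and it is this choice --- not dimension --- that supplies the required closure property.
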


\begin{proof}

Let $I$ be the standard Iwahori subgroup, i.e. it is the inverse image of the Borel subgroup $B_{0}$ under the reduction $G(\co)\rightarrow G(k)$. Let $I'=\Ad(\diag(\ep^{n_{1}},\ep^{n_{2}},\ep^{n_{2}}))I$. By \cite{chen} corollary 2.3, we have the affine paving
\begin{eqnarray*}
&&\sch(2n_{1}+n_{2},0,0)\cap \diag(\ep^{-n_{2}},\ep^{-n_{1}},\ep^{-n_{1}})\cdot\sch(2n_{1}+n_{2},2n_{1}+n_{2},0)\\
&&=\bigsqcup_{\mu\in (F_{\gamma})^{T}}\sch(2n_{1}+n_{2},0,0)\cap I'\ep^{\mu}K/K\\
&&=\bigsqcup_{\mu\in (F_{\gamma})^{T}}\begin{bmatrix}
\co&\kp^{a}&\kp^{b}\\
\kp^{n_{2}-n_{1}+1}&\co&\co\\
\kp^{n_{2}-n_{1}+1}&\kp&\co
\end{bmatrix}\ep^{\mu}K/K,
\end{eqnarray*}
where $a=\max\{n_{1}-n_{2},-\mu_{2}\},\,b=\max\{n_{1}-n_{2},-\mu_{3}\}$. We denote by $C(\mu)$ the resulting pavement containing $\ep^{\mu}$.

To pave $F_{\gamma}$, we cut it into $4$ parts. Let $(\mu'_{1},\mu'_{2},\mu'_{3})=(\mu_{1}-n_{1},\mu_{2}-n_{2},\mu_{3}-n_{2})$, and
\begin{eqnarray*}
R_{1}&=&\{\mu\in (F_{\gamma})^{T}\mid \mu'_{1}\leq \mu'_{2},\mu'_{3}\},\\
R'_{1}&=&\{\mu\in (F_{\gamma})^{T}\mid \mu'_{1}\geq \mu'_{2},\mu'_{3};\,\mu_{2}\leq n_{2}-n_{1};\,\mu_{3}\leq n_{2}-n_{1}\},\\
R_{2}&=&\{\mu\in (F_{\gamma})^{T}\mid \mu'_{2}< \mu'_{1},\mu'_{3};\,\mu_{3}>n_{2}-n_{1}\},\\
R_{3}&=&\{\mu\in (F_{\gamma})^{T}\mid \mu'_{3}< \mu'_{1},\mu'_{2};\,\mu_{2}>n_{2}-n_{1}\}.
\end{eqnarray*}
Although $R_{1}$ and $R'_{1}$ may intersect at one point, it doesn't cause trouble to the paving. Figure \ref{nonstandard} gives an idea of the cutting. Let $V_{i}=\bigsqcup_{\mu\in R_{i}}C(\mu),\,i=1,2,3$. For $l\in \bz$, let $R_{i,l}=\{\mu\in R_{i}\mid\mu_{i}=l\}$ and $V_{i,l}=\bigsqcup_{\mu\in R_{i,l}}C(\mu)$. Similar notations for $R'_{1}$.

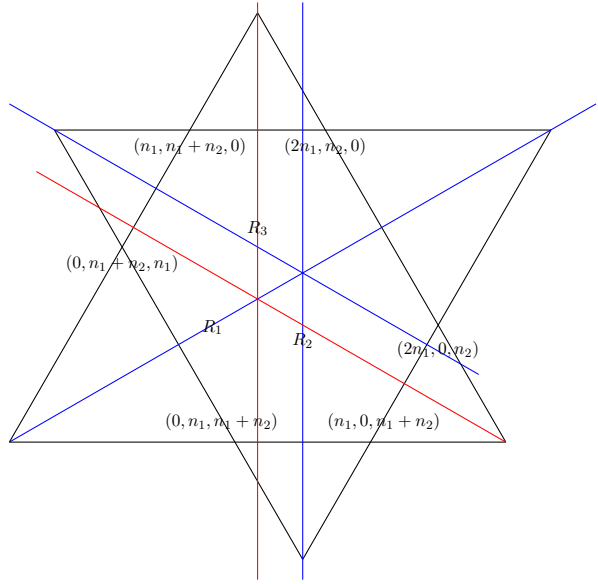
\begin{figure}[h]
\begin{center}
\begin{tikzpicture}[node distance = 2cm, auto, scale=0.6, transform shape]
\draw (-5.5, 3.17)--(5.5,3.17);
\draw (-5.5, 3.17)--(0,-6.35);
\draw (0,-6.35)--(5.5,3.17);
\draw (-1, 5.77)--(-6.5,-3.75);
\draw (-6.5, -3.75)--(4.5, -3.75);
\draw (4.5,-3.75)--(-1,5.77);

\draw [blue] (0,6)--(0,-6.8);
\draw [blue] (-6.5,3.75)--(3.9,-2.25);
\draw [blue] (-6.5,-3.75)--(6.5,3.75);

\draw [red]  (-1, 6)--(-1,-6.8);
\draw [red]  (4.5, -3.75)--(-5.9,2.25);

\node at (0.5, 2.8) {$(2n_{1},n_{2},0)$};
\node at (-2.5,2.8) {$(n_{1},n_{1}+n_{2},0)$};
\node at (-4,0.2) {$(0,n_{1}+n_{2}, n_{1})$};
\node at (-1.8, -3.3){$(0,n_{1},n_{1}+n_{2})$};
\node at (1.8, -3.3){$(n_{1},0,n_{1}+n_{2})$};
\node at (3, -1.7){$(2n_{1},0, n_{2})$};

\node at (-2,-1.2) {$R_{1}$};
\node at (-1, 1){$R_{3}$};
\node at (0,-1.5){$R_{2}$};

\end{tikzpicture}
\caption{Nonstandard paving.}
\label{nonstandard}
\end{center}
\end{figure}

We use the Iwahori subgroup $I'$ to pave $V_{1}\cap \xx_{\gamma}$. Since we have
$$
C(\mu)=\begin{bmatrix}
\co&&\\
\kp^{n_{2}-n_{1}+1}&\co&\co\\
\kp^{n_{2}-n_{1}+1}&\kp&\co
\end{bmatrix}\ep^{\mu}K/K,$$
we see easily that $C(\mu)\cap \xx_{\gamma}$ is isomorphic to an affine space.

We also use $I'$ to pave $V'_{1}\cap \xx_{\gamma}$. We have
$$
C(\mu)=\begin{bmatrix}
\co&\kp^{-\mu_{2}}&\kp^{-\mu_{3}}\\
&\co&\co\\
&\kp&\co
\end{bmatrix}\ep^{\mu}K/K.$$
It is easily checked that $C(\mu)\cap \xx_{\gamma}$ is isomorphic to an affine space.

We need a second nonstandard paving in order to pave $V_{2}\cap \xx_{\gamma}$ and $V_{3}\cap \xx_{\gamma}$. Since they are symmetric, we only give details for $V_{3}\cap \xx_{\gamma}$. Since $V_{3}=\bigsqcup_{l\in \bz} V_{3,l}$, we only need to pave $V_{3,l}\cap \xx_{\gamma}$. Let
$
I_{l}'=\Ad(\diag(1,\ep^{l},\ep^{l}))I',
$
we claim that 
$$
V_{3,l}\cap \xx_{\gamma}=\bigsqcup_{\mu\in R_{3,l}}V_{3,l}\cap I'_{l}\ep^{\mu}K/K \cap \xx_{\gamma}
$$
is an affine paving. Since we have
$$
C(\mu)=\begin{bmatrix}
\co&\kp^{n_{1}-n_{2}}&\kp^{-\mu_{3}}\\
\kp^{n_{2}-n_{1}+1}&\co&\co\\
&\kp&\co
\end{bmatrix}\ep^{\mu}K/K,
$$
we see easily that $V_{3,l}$ admits an affine fibration onto the closed subvariety 
$$
\bigsqcup_{\mu\in R_{3,l}}\begin{bmatrix}
\co&\kp^{n_{1}-n_{2}}&\\
\kp^{n_{2}-n_{1}+1}&\co&\\
&&\co
\end{bmatrix}\ep^{\mu}K/K$$ 
of $\xx^{\gl_{2}\times \gl_{1}}$. This implies 
$$
V_{3,l}\cap I'_{l}\ep^{\mu}K/K=\begin{bmatrix}
\co&\kp^{c}&\kp^{-l}\\
\kp^{n_{2}-n_{1}+l+1}&\co&\co\\
&\kp&\co
\end{bmatrix}\ep^{\mu}K/K,
$$
with $c=\max(n_{1}-n_{2}-l, -\mu_{2})$. With this equality, it is easily checked that $V_{3,l}\cap I'_{l}\ep^{\mu}K/K \cap \xx_{\gamma}$ is isomorphic to an affine space.

It remains to precise the order of the paving. First of all, the Bruhat-Tits order with respect to $I'$ induces an ordering of $V'_{1,l}$ and $V_{i,l},\,i=1,2,3,\,l\in \bz$. On $V_{1,l}$ and $V'_{1, l}$ we use the Bruhat-Tits order with respect to $I'$, while on $V_{2,l}$ and $V_{3,l}$, we use the Bruhat-Tits order with respect to $I'_{l}$.

\end{proof}

\subsection{Rationality conjecture}

To calculate the Poincaré polynomial of $F_{\gamma}$, we proceed by an indirect way in order to avoid the combinatorial complexity. Our strategy is the following: we calculate firstly the Poincaré polynomial of $\xx_{\gamma}\cap \sch(2n_{1}+n_{2}, 0,0)$, then we calculate the Poincaré polynomial of the complement of $F_{\gamma}$ in it, their difference gives what we want. It turns out that the complement of $F_{\gamma}$ can be paved in affine spaces.

\begin{thm}\label{main2}

The Poincaré polynomial of $F_{\gamma}$ is
$$
P_{\bnn}(t)=\sum_{i=1}^{n_{1}}i(t^{4i-2}+t^{4i-4})+\sum_{i=2n_{1}}^{n_{1}+n_{2}-1}(2n_{1}+1)t^{2i}+\sum_{i=n_{1}+n_{2}}^{2n_{1}+n_{2}-1}4(2n_{1}+n_{2}-i)t^{2i}+t^{4n_{1}+2n_{2}}.
$$
\end{thm}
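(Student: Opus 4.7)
The plan follows the indirect route the author flags: set $Y := \xx_\gamma \cap \sch(2n_1+n_2,0,0)$, compute the Poincaré polynomials of $Y$ and of $Y \setminus F_\gamma$ separately, and subtract. The motivation is that summing cell dimensions directly over the nonstandard paving of $F_\gamma$ built in Section 4.1 is combinatorially unpleasant because one has to juggle the four regions $R_1, R'_1, R_2, R_3$ and two different Iwahori orderings. The two ingredients of the indirect approach are each paved by cells of Iwahori type, so the subtraction makes sense on the level of Poincaré polynomials term by term.

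For $P(Y,t)$ I would use the standard Iwahori paving: by Bruhat-Tits $\sch(2n_1+n_2,0,0)=\bigsqcup_\mu I\ep^\mu K/K$ with $\mu$ ranging over the lattice points of the downward triangle $\triangledown$ of Figure \ref{truncation1}, and the intersection with $\xx_\gamma$ is an affine cell at each $\mu$ by \cite{chen}, Corollary 2.3. The dimension of each cell is an explicit linear expression in the coordinates of $\mu$ and in $n_1,n_2$, so summing $t^{2\dim}$ over the $(2n_1+n_2+1)(2n_1+n_2+2)/2$ lattice points of $\triangledown$ gives $P(Y,t)$ as an explicit double sum.

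For $P(Y\setminus F_\gamma,t)$, observe that $Y\setminus F_\gamma$ corresponds geometrically to the three ``corner'' triangles obtained by removing the hexagon $\ec(x_0)$ from $\triangledown$: two have side length $n_1$ (attached to the $e_2$- and $e_3$-vertices) and one has side length $n_2$ (attached to the $e_1$-vertex). By the Arthur-Kottwitz reduction (Proposition \ref{klretraction}), the locus of $\xx_\gamma$ whose echelon sits in a given corner is an iterated affine fibration, via the appropriate retraction $f_P$, over $\xx_\gamma^{M,\nu}(D_0^P)$ for a maximal parabolic $P=MN$ of Levi type $\gl_2\times\gl_1$. These Levi truncated affine Springer fibers reduce to intersections of affine Schubert cells for $\gl_2$, whose affine pavings are elementary, so each of the three contributions is an explicit polynomial in $t$, computable from $n_1$, $n_2$, and the weight $\nu$.

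Taking the difference $P_\bnn(t)=P(Y,t)-P(Y\setminus F_\gamma,t)$ and collapsing the resulting arithmetic sums into closed form is the last step. The main obstacle is purely combinatorial: one must keep careful track, at each cohomological degree $2i$, of how many cells in $Y$ and how many in its complement contribute, with special care near the transition indices $i=2n_1-1$, $i=n_1+n_2-1$, $i=2n_1+n_2-1$ where the piecewise shape of the formula changes. As sanity checks I would verify small cases ($\bnn=(1,1)$ and $(1,2)$) directly from the paving of Section 4.1, check that $P_\bnn(1)$ equals the number of $T$-fixed points in the hexagon $\ec(x_0)$, and check that the leading coefficient $1\cdot t^{4n_1+2n_2}$ corresponds to the unique open cell at the top extremal $T$-fixed point of $F_\gamma$.
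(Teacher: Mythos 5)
Your indirect strategy --- compute $P(Y,t)$ for $Y=\xx_\gamma\cap\sch(2n_1+n_2,0,0)$, compute $P(Y\setminus F_\gamma,t)$, subtract --- is exactly the one the paper uses, and the geometric picture of $Y^T\setminus(F_\gamma)^T$ as three corner triangles (two of side $n_1$, one of side $n_2$) is also correct. Where you diverge is in how the two pieces are paved. The paper does both by hand with explicit conjugated Iwahori subgroups: it paves $Y$ with cells $I'\ep^\mu K/K\cap\xx_\gamma$ for $I'=\Ad(\diag(\ep^{2n_1+n_2},1,1))I$, then paves the complement in two layers --- the outer part $T_1\cup T_2\cup T_3$ via the standard Iwahori $I$, and the remaining strip $T'_1$ via $I''=\Ad(\diag(\ep^{n_1},\ep^{n_2},\ep^{n_2}))I$ --- so that every cell is an explicit intersection with a matrix shape and its dimension is read off directly. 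You instead propose to pave the complement via the Arthur--Kottwitz reduction, using the retractions $f_P$ of Propositions~\ref{klretraction} and \ref{locally closed} to fiber the strata over truncated $\gl_2\times\gl_1$ Springer fibers; the paper's own closing Remark confirms this gives the same paving, so it is a legitimate alternative. The one correction I would make: the retraction from a \emph{corner} region is not governed by a single maximal parabolic of Levi type $\gl_2\times\gl_1$ --- the corners are unions of strata $S_{B'}^\nu$ indexed by Borel subgroups $B'$, which are zero-dimensional cells (discrete fibers over $\xx_\gamma^T$); it is the \emph{strip} regions that retract onto $\gl_2$-truncated fibers. To package both one uses the grouped strata $Z_{P_0}^{\nu_0}$ of Proposition~\ref{locally closed}, which do fiber over $\xx_\gamma^{M_0,\nu_0}(D^{P_0})$, so the idea can be repaired.

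There is however a genuine gap in your plan for $P(Y,t)$: you claim the \emph{standard} Iwahori $I$ gives an affine paving of $Y$, citing \cite{chen}, Corollary~2.3. That corollary (and the paper's Theorem~3.11 in \cite{chen}) is stated for suitably \emph{conjugated} Iwahori subgroups, chosen precisely so that $C(\mu)\cap\xx_\gamma$ is an affine space; the paper here uses $I'=\Ad(\diag(\ep^{2n_1+n_2},1,1))I$, not $I$, and the dimension formula
$$
\min\{n_1,\mu_2\}+\min\{n_1,\mu_3\}+\min\Bigl\{n_2,\;|\mu_2-\mu_3|+\tfrac{\mathrm{sign}(\mu_2-\mu_3)-1}{2}\Bigr\}
$$
is specific to that choice. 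The standard Iwahori does pave the far corners $T_1\cup T_2\cup T_3$ (the paper uses it exactly there), but for $\mu$ near the interior of the hexagon the standard Iwahori cells intersected with $\xx_\gamma$ need not be affine. So you must either adopt the conjugated Iwahori $I'$ for the ambient intersection $Y$, or justify independently why $I\ep^\mu K/K\cap\xx_\gamma$ is a cell for all $\mu\in\triangledown^T$. Your sanity checks are good: in particular $P_\bnn(1)=n_1^2+2n_1n_2+2n_1+n_2+1$ does equal the lattice-point count of the hexagon $\ec(x_0)$.
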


Taking into account the fact that $F_{n_{2},n_{1}}$ has the same Poincaré polynomial as $F_{n_{1},n_{2}}$, we get the precise expression for the generating series.

\begin{cor}
The power series
$$
\sum_{n_{1}=1}^{+\infty}\sum_{n_{2}=1}^{+\infty}  P_{(n_{1},n_{2})}(t)\, T_{1}^{n_{1}} T_{2}^{n_{2}}\in \bz[t][[T_{1},T_{2}]]
$$
equals the rational fraction
\begin{eqnarray*}
\lefteqn{2\Bigg\{\frac{(t^{2}+1)T_{1}T_{2}}{(1-T_{2})(1-T_{1}T_{2})(1-t^{4}T_{1}T_{2})^{2}}+\frac{t^{4}T_{1}T_{2}^{2}(3-t^{4}T_{1}T_{2})}{(1-T_{2})(1-t^{2}T_{2})(1-t^{4}T_{1}T_{2})^{2}}}\\
&+&\frac{4t^{4}T_{1}T_{2}}{(1-t^{2}T_{2})(1-t^{4}T_{1}T_{2})^{2}(1-t^{6}T_{1}T_{2})}+\frac{t^{6}T_{1}T_{2}}{(1-t^{2}T_{2})(1-t^{6}T_{1}T_{2})}\Bigg\}\\
&-&\left[\frac{(t^{2}+1)T_{1}T_{2}}{(1-T_{1}T_{2})(1-t^{4}T_{1}T_{2})^{2}}+\frac{4t^{4}T_{1}T_{2}}{(1-t^{4}T_{1}T_{2})^{2}(1-t^{6}T_{1}T_{2})}+\frac{t^{6}T_{1}T_{2}}{1-t^{6}T_{1}T_{2}}\right]
\end{eqnarray*}

\end{cor}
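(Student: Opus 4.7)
The plan is a direct generating-function calculation starting from the closed formula for $P_{(n_1,n_2)}(t)$ given by Theorem~\ref{main2} (valid for $n_1\leq n_2$) together with the symmetry $P_{(n_1,n_2)}(t) = P_{(n_2,n_1)}(t)$ recorded in the preamble of the corollary. First I would split the double series according to $n_1\leq n_2$ versus $n_1 > n_2$ and use the symmetry to write
$$
\sum_{n_1,n_2\geq 1} P_{(n_1,n_2)}(t)\,T_1^{n_1} T_2^{n_2} \;=\; A(T_1,T_2) \;+\; A(T_2,T_1) \;-\; D(T_1T_2),
$$
where $A(T_1,T_2) := \sum_{1\leq n_1\leq n_2} P_{(n_1,n_2)}(t)\, T_1^{n_1} T_2^{n_2}$ and $D(T_1T_2) := \sum_{n\geq 1} P_{(n,n)}(t)\,(T_1T_2)^n$; the subtraction $-D$ compensates for the diagonal being counted in both $A(T_1,T_2)$ and $A(T_2,T_1)$. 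The corollary then amounts to two separate closed-form identifications: $A$ equals the expression inside the large braces, and $D$ equals the expression in the subtracted square brackets. (The notation ``$2\{\cdots\}$'' in the statement is to be read as the symmetrization $A(T_1,T_2)+A(T_2,T_1)$; the bracket is manifestly symmetric in $T_1,T_2$, as $D$ must be.)

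Next I would compute $A(T_1,T_2)$ by treating the four summands of $P_{(n_1,n_2)}(t)$ separately and swapping the order of summation so that the three geometric-type sums over $n_2$, $n_1$, $i$ can be carried out one after another, invoking only elementary identities such as $\sum_{k\geq 1} kx^k = x/(1-x)^2$ and $\sum_{k\geq 1}(2k+1)x^k = x(3-x)/(1-x)^2$. Concretely: for the first summand, fixing $i$ and summing $n_1\geq i$ then $n_2\geq n_1$ produces the first braced term
$(t^2+1)T_1T_2/[(1-T_2)(1-T_1T_2)(1-t^4T_1T_2)^2]$, via the substitutions $t^{4i-2}(T_1T_2)^i = t^{-2}(t^4T_1T_2)^i$ and $t^{4i-4}(T_1T_2)^i = t^{-4}(t^4T_1T_2)^i$. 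For the second summand, the change of index $j = i - 2n_1$ reduces the inner sum to a finite segment, and the identity for $\sum(2n+1)x^n$ yields the second braced term. For the third summand, the substitution $k = 2n_1 + n_2 - i$ turns the calculation into a triple geometric sum, producing the third braced term. The fourth summand $t^{4n_1+2n_2}$ is a direct double geometric sum giving the fourth braced term.

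The computation of $D$ proceeds in exactly the same way, now with $n_1 = n_2 = n$. Observe that the middle sum in $P_{(n,n)}(t)$ is empty: its upper bound $n_1+n_2-1 = 2n-1$ lies one below its lower bound $2n_1 = 2n$. The remaining three summands, after the same generating-function manipulations specialized to a single variable $T_1T_2$, produce the three terms in the subtracted bracket. Combining the two identifications via the formula displayed in the first paragraph then yields the claimed rational expression.

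The argument is entirely mechanical; the only real obstacle is bookkeeping, i.e.\ keeping track of the three nested summation variables, the four summands of $P_{(n_1,n_2)}(t)$, and the various index substitutions, while respecting the truncation $n_1\leq n_2$ throughout. No further geometric input beyond Theorem~\ref{main2} and the symmetry of Poincaré polynomials under swapping $(n_1,n_2)$ is needed.
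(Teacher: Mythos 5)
Your proposal is correct and follows exactly the route the paper intends: the paper deduces the corollary from Theorem \ref{main2} solely by invoking the symmetry $P_{(n_1,n_2)}=P_{(n_2,n_1)}$, i.e.\ writing the series as the sum over $n_1\le n_2$ plus its image under swapping $T_1\leftrightarrow T_2$ minus the diagonal series, and then summing the four pieces of $P_{\bnn}(t)$ by elementary geometric-series manipulations, which is precisely your $A(T_1,T_2)+A(T_2,T_1)-D(T_1T_2)$ decomposition. Your reading of the ``$2\{\cdots\}$'' as symmetrization (and your observation that the middle sum is empty on the diagonal) is the correct interpretation, and your term-by-term identifications of the braced and bracketed expressions with $A$ and $D$ check out.
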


\begin{proof}[Proof of the theorem \ref{main2}]

To pave $\xx_{\gamma}\cap\sch(2n_{1}+n_{2},0,0)$, we use the same idea as the proof of theorem 3.11 in \cite{chen}. We can pave $\sch(2n_{1}+n_{2},0,0)$ in affine spaces with the Iwahori subgroup
$$
I'=\Ad(\diag(\ep^{2n_{1}+n_{2}},1,1))I.
$$
Let $C(\mu)=\sch(2n_{1}+n_{2},0,0)\cap I'\ep^{\mu}K/K$, then we have the affine paving
$$
\sch(2n_{1}+n_{2},0,0)=\bigsqcup_{\mu\in \sch(2n_{1}+n_{2},0,0)^{T}}C(\mu),
$$
with
$$
C(\mu)=\begin{bmatrix}
\co&&\\
\kp^{-\mu_{1}}&\co&\co\\
\kp^{-\mu_{1}}&\kp&\co
\end{bmatrix}\ep^{\mu}K/K.
$$
Then we prove with the same method that $C(\mu)\cap \xx_{\gamma}$ is an affine space of dimension
$$
\min\{n_{1}, \mu_{2}\}+\min\{n_{1},\mu_{3}\}+\min\left\{n_{2}, \vert \mu_{2}-\mu_{3}\vert +\frac{\mathrm{sign}(\mu_{2}-\mu_{3})-1}{2}\right\}.
$$
It suffices to count the number of affine pavements of each dimension to get the Poincaré polynomial. To facilitate the work, we cut $\sch(2n_{1}+n_{2},0,0)^{T}$ into $7$ parts, as indicated in  figure \ref{partition}, where

\begin{eqnarray*}
R_{1}&=&\{\mu\in \sch(2n_{1}+n_{2},0,0)^{T}\mid \mu_{2}-\mu_{3}>n_{2}\},\\
R'_{1}&=&\{\mu\in \sch(2n_{1}+n_{2},0,0)^{T}\mid \mu_{3}-\mu_{2}>n_{2}\},\\
R_{2}&=&\{\mu\in \sch(2n_{1}+n_{2},0,0)^{T}\mid \mu_{2}-\mu_{3}\leq n_{2}, \mu_{3}<n_{1},\mu_{2}>n_{1}\},\\
R'_{2}&=&\{\mu\in \sch(2n_{1}+n_{2},0,0)^{T}\mid \mu_{3}-\mu_{2}\leq n_{2}, \mu_{2}<n_{1},\mu_{3}>n_{1}\},\\
R_{3}&=&\{\mu\in \sch(2n_{1}+n_{2},0,0)^{T}\mid  \mu_{3}\geq n_{1},\mu_{2}\geq n_{1}\},\\
R_{4}&=&\{\mu\in \sch(2n_{1}+n_{2},0,0)^{T}\mid  \mu_{3}\leq n_{1},\mu_{2}\leq n_{1}, n_{2}<\mu_{1}\leq n_{1}+n_{2}\},\\
R'_{4}&=&\{\mu\in \sch(2n_{1}+n_{2},0,0)^{T}\mid  \mu_{3}< n_{1},\mu_{2}< n_{1}, n_{1}+n_{2}<\mu_{1}\leq 2n_{1}+n_{2}\},\\
\end{eqnarray*}

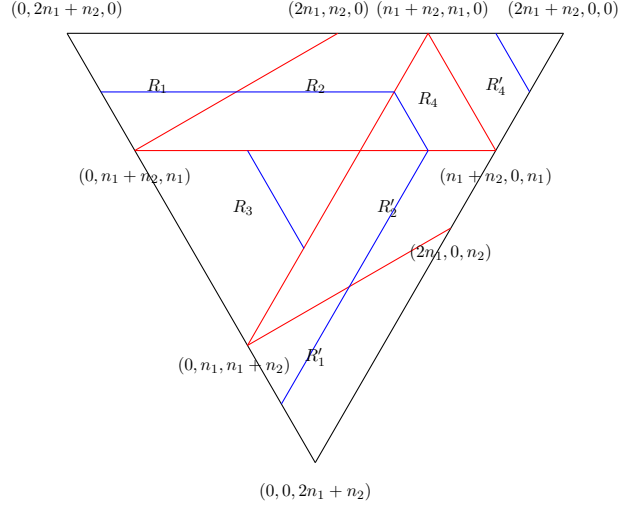
\begin{figure}[h]
\begin{center}
\begin{tikzpicture}[node distance = 2cm, auto, scale=0.6, transform shape]
\draw (-5.5, 3.17)--(5.5,3.17);
\draw (-5.5, 3.17)--(0,-6.35);
\draw (0,-6.35)--(5.5,3.17);

\draw [red] (-4,0.57)--(0.5,3.17);
\draw [red] (-1.5,-3.75)--(3,-1.15);
\draw [red] (-4,0.57)--(4,0.57);
\draw [red] (-1.5,-3.75)--(2.5,3.17);
\draw [red] (4,0.57)--(2.5,3.17);

\draw [blue] (-4.75,1.87)--(1.75,1.87);
\draw [blue] (1.75,1.87)--(2.5, 0.57);
\draw [blue] (-0.75,-5.05)--(2.5, 0.57);
\draw [blue] (-1.5,0.57)--(-0.25, -1.6);
\draw [blue] (4,3.17)--(4.75,1.87);

\node at (-5.5, 3.7) {$(0,2n_{1}+n_{2},0)$};
\node at (5.5, 3.7) {$(2n_{1}+n_{2},0,0)$};
\node at (0.3, 3.7) {$(2n_{1},n_{2},0)$};
\node at (2.6, 3.7) {$(n_{1}+n_{2},n_{1},0)$};
\node at (4,0) {$(n_{1}+n_{2},0,n_{1})$};
\node at (-4,0) {$(0,n_{1}+n_{2}, n_{1})$};
\node at (-1.8, -4.2){$(0,n_{1},n_{1}+n_{2})$};
\node at (3, -1.7){$(2n_{1},0, n_{2})$};
\node at (0, -7){$(0,0, 2n_{1}+n_{2})$};

\node at (-3.5, 2) {$R_{1}$};
\node at (0, -4) {$R'_{1}$};
\node at (0,2) {$R_{2}$};
\node at (1.6,-0.7) {$R'_{2}$};
\node at (-1.6,-0.7) {$R_{3}$};
\node at (2.5,1.7) {$R_{4}$};
\node at (4,2) {$R'_{4}$};

\end{tikzpicture}
\caption{Partition of the triangle.}
\label{partition}
\end{center}
\end{figure}

Now we count the contribution of each part. We first sum over each blue lines as indicated in figure \ref{partition}, then we add up all blue lines. We use the notation $\sum_{\mu=\nu}^{\nu'}$ to mean summation over the line having ends in $\nu,\,\nu'$. Since the Poincaré polynomial is a polynomial in $t^{2}$, we use $q:=t^{2}$ to simplify the notation.

\begin{enumerate}

\item The contribution of $C(\mu)\cap \xx_{\gamma}, \mu\in R_{1}$ to the Poincaré polynomial of $\xx_{\gamma}\cap\sch(2n_{1}+n_{2},0,0)$ is
$$
\sum_{i=0}^{n_{1}-1}\sum_{\mu=(0,2n_{1}+n_{2}-i,i)}^{(2n_{1}-2i-1,n_{2}+i+1,i)}q^{n_{1}+n_{2}+i}=2\sum_{i=1}^{n_{1}}iq^{2n_{1}+n_{2}-i}.
$$

\item The contribution of $C(\mu)\cap \xx_{\gamma}, \mu\in R'_{1}$ is the same as $R_{1}$.

\item The contribution of $C(\mu)\cap \xx_{\gamma}, \mu\in R_{2}$ is
\begin{eqnarray*}
&&\sum_{i=0}^{n_{1}-1}\sum_{\mu=(2n_{1}-2i,n_{2}+i,i)}^{(n_{1}+n_{2}-i-1, n_{1}+1, i)}q^{i+n_{1}+\mu_{2}-\mu_{3}}\\
&=&n_{1}\sum_{i=2n_{1}+1}^{n_{1}+n_{2}}q^{i}+\sum_{i=1}^{n_{1}-1}(n_{1}-i)q^{n_{1}+n_{2}+i}.
\end{eqnarray*}

\item The contribution of $R'_{2}$ is
\begin{eqnarray*}
&&\sum_{i=0}^{n_{1}-1}\sum_{\mu=(2n_{1}-2i,i, n_{2}+i)}^{(n_{1}+n_{2}-i-1,i, n_{1}+1)}q^{i+n_{1}+\mu_{3}-\mu_{2}-1}\\
&=&n_{1}\sum_{i=2n_{1}+1}^{n_{1}+n_{2}}q^{i-1}+\sum_{i=1}^{n_{1}-1}(n_{1}-i)q^{n_{1}+n_{2}+i-1}.
\end{eqnarray*}

\item The contribution of $R_{3}$ is
\begin{eqnarray*}
&&\sum_{i=0}^{n_{2}}\sum_{\mu=(i,n_{1},n_{1}+n_{2}-i)}^{(i,n_{1}+n_{2}-i, n_{1})}q^{2n_{1}+\vert \mu_{2}-\mu_{3}\vert +\frac{\mathrm{sign}(\mu_{2}-\mu_{3})-1}{2}}\\
&=&\sum_{i=0}^{n_{2}}q^{2n_{1}}(1+q+q^{2}+\cdots+q^{n_{2}-i})\\ &=&q^{2n_{1}}\sum_{i=0}^{n_{2}}(n_{2}+1-i)q^{i}.
\end{eqnarray*}

\item The contribution of $R_{4}$ is 
\begin{eqnarray*}
&&\sum_{i=0}^{n_{1}-1}\sum_{\mu=(n_{1}+n_{2}-i,n_{1},i)}^{(n_{1}+n_{2}-i, i, n_{1})}q^{n_{1}+i+\vert \mu_{2}-\mu_{3}\vert +\frac{\mathrm{sign}(\mu_{2}-\mu_{3})-1}{2}}\\
&=&\sum_{i=0}^{n_{1}-1}q^{n_{1}+i}(1+q+\cdots+q^{n_{1}-i})\\
&=&n_{1}q^{2n_{1}}+\sum_{i=0}^{n_{1}-1}(i+1)q^{n_{1}+i}.
\end{eqnarray*}

\item The contribution of $R'_{4}$ is
\begin{eqnarray*}
&&\sum_{i=0}^{n_{1}-1}\sum_{\mu=(2n_{1}+n_{2}-i, 0, i)}^{(2n_{1}+n_{2}-i,i,0)}q^{i+\vert \mu_{2}-\mu_{3}\vert +\frac{\mathrm{sign}(\mu_{2}-\mu_{3})-1}{2}}\\
&=&\sum_{i=0}^{n_{1}-1}q^{i}(1+q+\cdots+q^{i}).
\end{eqnarray*}

\end{enumerate}

The complement of $F_{\gamma}$ in $\xx_{\gamma}\cap \sch(2n_{1}+n_{2},0,0)$ can be paved in affine spaces in the following way: Observe that $F_{\gamma}$ is contained in the intersection $\xx_{\gamma}\cap \sch(n_{1}+n_{2}, n_{1},0)$, whose complement in $\xx_{\gamma}\cap \sch(2n_{1}+n_{2},0,0)$ can be paved in affine spaces using the standard Iwahori subgroup $I$. It suffices to pave the complement of $F_{\gamma}$ in $\xx_{\gamma}\cap \sch(n_{1}+n_{2}, n_{1},0)$, which can be done by using the Iwahori subgroup
$$
I''=\Ad(\diag(\ep^{n_{1}},\ep^{n_{2}},\ep^{n_{2}}))I.
$$

We cut the complement of $F_{\gamma}^{T}$ in $\sch(2n_{1}+n_{2},0,0)^{T}$ as indicated in figure \ref{complementary}, where
\begin{eqnarray*}
T_{1}&=&\{\mu\in \sch(2n_{1}+n_{2},0,0)^{T}\mid \mu_{1}\geq n_{1}+n_{2}+1\},\\
T_{2}&=&\{\mu\in \sch(2n_{1}+n_{2},0,0)^{T}\mid \mu_{2}\geq n_{1}+n_{2}+1\},\\
T_{3}&=&\{\mu\in \sch(2n_{1}+n_{2},0,0)^{T}\mid \mu_{3}\geq n_{1}+n_{2}+1\},\\
T'_{1}&=&\{\mu\in \sch(2n_{1}+n_{2},0,0)^{T}\mid 2n_{1}+1\leq \mu_{1}\leq n_{1}+n_{2}\}.
\end{eqnarray*}

\begin{figure}[h]
\begin{center}
\begin{tikzpicture}[node distance = 2cm, auto, scale=0.6, transform shape]
\draw (-5.5, 3.17)--(5.5,3.17);
\draw (-5.5, 3.17)--(0,-6.35);
\draw (0,-6.35)--(5.5,3.17);

\draw [red] (-4,0.57)--(-2.5,3.17);
\draw [red] (-1.5,-3.75)--(1.5,-3.75);
\draw [red] (4,0.57)--(2.5,3.17);
\draw [red] (0.5,3.17)--(3,-1.15);

\draw [blue] (-4.75,1.87)--(-3.2,1.87);
\draw [blue] (-0.75,-5.05)--(0, -3.75);
\draw [blue] (4,3.17)--(4.75,1.87);

\node at (-5.5, 3.7) {$(0,2n_{1}+n_{2},0)$};
\node at (5.5, 3.7) {$(2n_{1}+n_{2},0,0)$};
\node at (0.3, 3.7) {$(2n_{1},n_{2},0)$};
\node at (2.6, 3.7) {$(n_{1}+n_{2},n_{1},0)$};
\node at (4,0) {$(n_{1}+n_{2},0,n_{1})$};
\node at (-4,0) {$(0,n_{1}+n_{2}, n_{1})$};
\node at (-1.8, -4.2){$(0,n_{1},n_{1}+n_{2})$};
\node at (3, -1.7){$(2n_{1},0, n_{2})$};
\node at (0, -7){$(0,0, 2n_{1}+n_{2})$};
\node at (-2.5, 3.7){$(n_{1},n_{1}+n_{2},0)$};
\node at (1.5, -4.2){$(n_{1},0,n_{1}+n_{2})$};

\node at (-4, 2.2) {$T_{2}$};
\node at (0, -4.8) {$T_{3}$};
\node at (2.5,1.7) {$T'_{1}$};
\node at (4,2.2) {$T_{1}$};

\end{tikzpicture}
\caption{Complementary of $F_{\gamma}$.}
\label{complementary}
\end{center}
\end{figure}
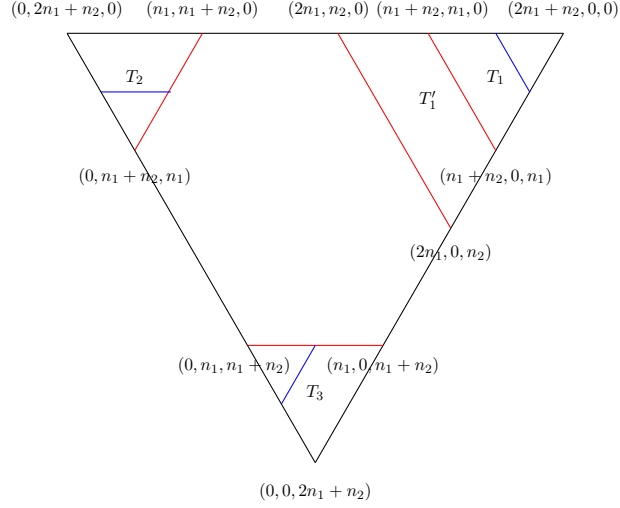

The complement of $\xx_{\gamma}\cap \sch(n_{1}+n_{2}, n_{1},0)$ in $\xx_{\gamma}\cap \sch(2n_{1}+n_{2},0,0)$ is
$$
\bigsqcup_{\mu\in T_{1}\cup T_{2}\cup T_{3}}I\ep^{\mu}K/K\cap \xx_{\gamma}.
$$
It is easy to verify that this is an affine paving. To calculate its Poincaré polynomial, in each region we first sum over the vertices on the blue lines as indicated in figure \ref{complementary}, then we sum over all the lines.

\begin{enumerate}
\item The contribution of $T_{1}$ is
\begin{eqnarray*}
&&\sum_{i=0}^{n_{1}-1}\sum_{\mu=(2n_{1}+n_{2}-i, 0, i)}^{(2n_{1}+n_{2}-i,i,0)}q^{2n_{1}+\vert \mu_{2}-\mu_{3}\vert +\frac{\mathrm{sign}(\mu_{2}-\mu_{3})-1}{2}}\\
&=&\sum_{i=0}^{n_{1}-1}q^{2n_{1}}(1+q+\cdots+q^{i}).
\end{eqnarray*}

\item
The contribution of $T_{2}$ is
$$
\sum_{i=0}^{n_{1}-1}\sum_{\mu=(0,2n_{1}+n_{2}-i,i)}^{(n_{1}-i-1,n_{1}+n_{2}+1,i)}q^{n_{1}+n_{2}+i}=\sum_{i=1}^{n_{1}}iq^{2n_{1}+n_{2}-i}.
$$

\item The contribution of $T_{3}$ is the same as that of $T_{2}$.
\end{enumerate}

It remains to calculate the Poincaré polynomial of the complement of $F_{\gamma}$ in $\xx_{\gamma}\cap \sch(n_{1}+n_{2}, n_{1},0)$. By proposition \ref{intersection}, it is the union
$$
\bigsqcup_{\mu\in T'_{1}}I''\ep^{\mu}K/K\cap\sch(n_{1}+n_{2}, n_{1},0)\cap\xx_{\gamma}.
$$  
By proposition \ref{gkmreg}, points in $I\ep^{\mu}K/K\cap \xx_{\gamma},\,\mu\in T_{2}\cup T_{3}$ don't belong to any $B'\ep^{\nu}K/K\cap \xx_{\gamma},\,\nu\in T'_{1}$, for any $B'\in \cf(T)$. The above intersection is thus equal to 
$$
\bigsqcup_{\mu\in T'_{1}}I''\ep^{\mu}K/K\cap\sch(2n_{1}+n_{2}, 0,0)\cap\xx_{\gamma},
$$
which is easily verified to be an affine space of dimension
$$
2n_{1}+\vert \mu_{2}-\mu_{3}\vert +\frac{\mathrm{sign}(\mu_{2}-\mu_{3})-1}{2}, 
$$
using the equality
$$
I''\ep^{\mu}K/K\cap\sch(2n_{1}+n_{2}, 0,0)=\begin{bmatrix}
\co&\kp^{a}&\kp^{b}\\
&\co&\co\\
&\kp&\co
\end{bmatrix}\ep^{\mu}K/K,
$$
where $a=\max\{n_{1}-n_{2},-\mu_{2}\},\,b=\max\{n_{1}-n_{2},-\mu_{3}\}$.

Summing up the contributions of all the pavements in $T'_{1}$ in the order as for the region $T_{1}$, we find the Poincaré polynomial of the complement of $F_{\gamma}$ in $\xx_{\gamma}\cap \sch(n_{1}+n_{2}, n_{1},0)$ to be

\begin{eqnarray*}
&&\sum_{i=n_{1}}^{n_{2}-1}\sum_{\mu=(2n_{1}+n_{2}-i, 0, i)}^{(2n_{1}+n_{2}-i,i,0)}q^{2n_{1}+\vert \mu_{2}-\mu_{3}\vert +\frac{\mathrm{sign}(\mu_{2}-\mu_{3})-1}{2}}\\
&=&\sum_{i=n_{1}}^{n_{2}-1}q^{2n_{1}}(1+q+\cdots+q^{i}).
\end{eqnarray*}

Now taking into account all the above calculations, we get the result as claimed in the theorem.
\end{proof}

\begin{rem}

Observe that in the above proof we actually give an affine paving of the complement of $F_{\gamma}$ in $\xx_{\gamma}\cap \sch(2n_{1}+n_{2},0,0)$, and this paving can also be obtained by the Arthur-Kottwitz reduction. In principle, one can calculate the Poincar\'e polynomial of the fundamental domain of the affine Springer fibers for $\gl_{4}$, using the same method with Arthur-Kottwitz reduction and the affine pavings in \cite{chen}.

\end{rem}

\section*{Acknowledgement} 

We want to thank G\'erard Laumon and Tam\'as Hausel for their interest in this project, and to Bernd Sturmfels for drawing our attention to his work with Xu Zhiqiang on the Sagbi bases of Cox-Nagata rings. Also, we want to thank an anonymous referee for his careful reading and helpful suggestions.

\end{document}